\newcommandx{\jianshu}[2][1=]{\todo[linecolor=red,backgroundcolor=red!25,bordercolor=red,#1]{#2}}
\newcommandx{\adithya}[2][1=]{\todo[linecolor=blue,backgroundcolor=blue!25,bordercolor=blue,#1]{#2}}
\newcounter{rmnum}
\def\Expect{{\sf E}}
\def\A{{\sf A}}
\def\state{{\sf X}}
\newcommand{\field}[1]{\mathbb{#1}}
\def\Re{\field{R}}
\def\transpose{{\hbox{\it\tiny T}}}
\def\argmin{\mathop{\rm arg\, min}}
\def\clF{{\cal F}}
\def\state{{\sf S}}
\def\eqdef{\mathbin{:=}}
\def\FRAC#1#2#3{\genfrac{}{}{}{#1}{#2}{#3}}
\def\ddtp{{\mathchoice{\FRAC{1}{d^{\hbox to 2pt{\rm\tiny +\hss}}}{dt}}%
{\FRAC{1}{d^{\hbox to 2pt{\rm\tiny +\hss}}}{dt}}%
{\FRAC{3}{d^{\hbox to 2pt{\rm\tiny +\hss}}}{dt}}%
{\FRAC{3}{d^{\hbox to 2pt{\rm\tiny +\hss}}}{dt}}}}
\newtheorem{theorem}{Theorem}[section]
\newtheorem{lemma}[theorem]{Lemma}
\newtheorem{assumption}[theorem]{Assumption}
\def\Lemma#1{Lemma~\ref{#1}}
\def\Theorem#1{Theorem~\ref{#1}}
\def\Section#1{Section~\ref{#1}}
\def\tiltheta{\widetilde{w}}
\def\tilw{\widetilde{w}}
\def\barf{{\overline {f}}}
\def\barL{{\overline {L}}}
\def\tiltheta{\widetilde \theta}
\def\bartheta{{\overline{\theta}}}
\def\dom{{\bf dom}}
\def\barw{{\overline {w}}}
\def\bartheta{{\overline {\theta}}}
\def\BState{\State\hskip-\ALG@thistlm}
\def\nn{\nonumber}
\def\defeq{\triangleq}
\def\prox{\mathrm{prox}}
\def\methodexact{SVRPDA-I}
\def\methodapprox{SVRPDA-II}
\def\LM{\mathrm{LM}}
\newcommand*{\addFileDependency}[1]{
  \typeout{(#1)}
  \@addtofilelist{#1}
  \IfFileExists{#1}{}{\typeout{No file #1.}}
}
\title{Stochastic Variance Reduced Primal Dual Algorithms \\ for Empirical Composition Optimization}
\author{Adithya M. Devraj\thanks{Department of Electrical and Computer 
Engineering,
University of Florida, Gainesville, USA. 
Email: {\tt adithyamdevraj@ufl.edu}. The work was done during an internship at Tencent AI Lab, Bellevue, WA.} \hspace{0.1in} and \hspace{0.1in} Jianshu Chen\thanks{
Tencent AI Lab, Bellevue, WA, USA. Email: \texttt{jianshuchen@tencent.com}.}}
\begin{document}

\maketitle

\begin{abstract}

We consider a generic \emph{empirical composition optimization} problem, where there are empirical averages present both outside and inside nonlinear loss functions. Such a problem is of interest in various machine learning applications, and cannot be directly solved by standard methods such as stochastic gradient descent. We take a novel approach to solving this problem by reformulating the original minimization objective into an equivalent min-max objective, which brings out all the empirical averages that are originally inside the nonlinear loss functions. We exploit the rich structures of the reformulated problem and develop a stochastic primal-dual algorithm, \methodexact, to solve the problem efficiently. We carry out extensive theoretical analysis of the proposed algorithm, obtaining the convergence rate, the computation complexity and the storage complexity. In particular, the algorithm is shown to converge at a \emph{linear} rate when the problem is strongly convex. Moreover, we also develop an approximate version of the algorithm, named \methodapprox, which further reduces the memory requirement. Finally, we evaluate our proposed algorithms on several real-world benchmarks, and experimental results show that the proposed algorithms significantly outperform existing techniques.
\end{abstract}

\section{Introduction}
\label{sec:introduction}

In this paper, we consider the following regularized \emph{empirical composition optimization} problem:
    \begin{align}
        \min_{\theta} 
        \frac{1}{n_X} 
        \sum_{i = 0}^{n_X-1} 
        \phi_i \bigg( 
                \frac{1}{n_{Y_i}} \sum_{j = 0}^{n_{Y_i}-1} f_{\theta}(x_i, y_{ij})
        \bigg)
        +
        g(\theta),
        \label{e:general_prob_def}
    \end{align}
where $(x_i,y_{ij}) \in \Re^{m_x} \times \Re^{m_y}$ is the $(i,j)$-th data sample, $f_\theta: \Re^{m_x} \times \Re^{m_y} \to \Re^{\ell}$ is a function parameterized by $\theta \in \Re^d$, $\phi_i: \Re^\ell \to \Re^+$ is a convex \emph{merit function}, which measures a certain loss of the parametric function $f_{\theta}$, and $g(\theta)$ is a $\mu$-strongly convex regularization term.

Problems of the form \eqref{e:general_prob_def} widely appear in many machine learning applications such as reinforcement learning  \cite{ducheli,dai2018sbe,dai2017learning,macua2015distributed}, unsupervised sequence classification \cite{liucheden17uns,yehcheyuyu18uns} and risk-averse learning \cite{russha06opt,wanfanliu17sto, liawanliu2016fin,linfanwanjor18imp,wanliufan16acc} --- see our detailed discussion in Section \ref{sec:motivation}. Note that the cost function \eqref{e:general_prob_def} has an empirical average (over $x_i$) outside the (nonlinear) merit function $\phi_i(\cdot)$ and an empirical average (over $y_{ij}$) inside the merit function, which makes it different from the \emph{empirical risk minimization} problems that are common in machine learning \cite{vapnik1998statistical}. Problem \eqref{e:general_prob_def} can be understood as a generalized version of the one considered in \cite{liawanliu2016fin,linfanwanjor18imp}.\footnote{In addition to the term in \eqref{e:gen_prob_def_wang_et_all}, the cost function in \cite{linfanwanjor18imp} also has another convex regularization term.} In these prior works, $y_{ij}$ and $n_{Y_i}$ are assumed to be independent of $i$ and $f_{\theta}$ is only a function of $y_j$ so that problem \eqref{e:general_prob_def} can be reduced to the following special case:
    \begin{align}
        \min_{\theta} 
        \frac{1}{n_X} 
        \sum_{i = 0}^{n_X-1} 
        \phi_i \bigg( \frac{1}{n_{Y}} \sum_{j = 0}^{n_{Y}-1} f_{\theta}(y_{j})\bigg).
        \label{e:gen_prob_def_wang_et_all}
    \end{align}
Our more general problem formulation \eqref{e:general_prob_def} encompasses wider applications (see Section \ref{sec:motivation}). Furthermore, different from \cite{dai2017learning, wanliufan16acc, wanfanliu17sto}, we focus on the finite sample setting, where we have empirical averages (instead of expectations) in \eqref{e:general_prob_def}. As we shall see below, the finite-sum structures allows us to develop efficient stochastic gradient methods that converges at linear rate.

While problem \eqref{e:general_prob_def} is important in many machine learning applications, there are several key challenges in solving it efficiently. First, the number of samples (i.e., $n_X$ and $n_{Y_i}$) could be extremely large: they could be larger than one million or even one billion. Therefore, it is unrealistic to use batch gradient descent algorithm to solve the problem, which requires going over all the data samples at each gradient update step. Moreover, since there is an empirical average inside the nonlinear merit function $\phi_i(\cdot)$, it is not possible to directly apply the classical stochastic gradient descent (SGD) algorithm. This is because sampling from both empirical averages outside and inside $\phi_i(\cdot)$ simultaneously would make the stochastic gradients intrinsically biased (see Appendix \ref{appendix:SGD_bias} for a discussion). 

To address these challenges, in this paper, we first reformulate the original problem \eqref{e:general_prob_def} into an equivalent saddle point problem (i.e., min-max problem), which brings out all the empirical averages inside $\phi_i(\cdot)$ and exhibits useful dual decomposition and finite-sum structures (Section \ref{sec:algorithm:minmax}). To fully exploit these properties, we develop a stochastic primal-dual algorithm that alternates between a dual step of stochastic variance reduced coordinate ascent and a primal step of stochastic variance reduced gradient descent (Section \ref{sec:algorithm:batch}). In particular, we develop a novel variance reduced stochastic gradient estimator for the primal step, which achieves better variance reduction with low complexity (Section \ref{sec:algorithm:vr}). We derive the convergence rate, the finite-time complexity bound, and the storage complexity of our proposed algorithm (Section \ref{s:theo}). In particular, it is shown that the proposed algorithms converge at a \emph{linear} rate when the problem is strongly convex. Moreover, we also develop an approximate version of the algorithm that further reduces the storage complexity without much performance degradation in experiments. We evaluate the performance of our algorithms on several real-world benchmarks, where the experimental results show that they significantly outperform existing methods (Section \ref{sec:experiments}). Finally, we discuss related works in Section \ref{sec:relatedworks} and conclude our paper in Section \ref{sec:conclusion}.

\section{Motivation and Applications}
\label{sec:motivation}

To motivate our composition optimization problem \eqref{e:general_prob_def}, we discuss several important machine learning applications where cost functions of the form \eqref{e:general_prob_def} arise naturally.

\paragraph{Unsupervised sequence classification:} Developing algorithms that can learn classifiers from unlabeled data could benefit many machine learning systems, which could save a huge amount of human labeling costs. In \cite{liucheden17uns, yehcheyuyu18uns}, the authors proposed such unsupervised learning algorithms by exploiting the sequential output structures. The developed algorithms are applied to optical character recognition (OCR) problems and automatic speech recognition (ASR) problems. In these works, the learning algorithms seek to learn a sequence classifier by optimizing the empirical output distribution match (Empirical-ODM) cost, which is in the following form (written in our notation):
\begin{align}
    \min_\theta 
    \bigg\{\!-\!\sum_{i=0}^{n_X-1} 
    p_{\LM}(x_i) 
    \log \bigg( 
        \frac{1}{n_Y} \sum_{j = 0}^{n_Y - 1} f_\theta(x_i, y_j) 
    \bigg) \bigg\},
    \label{e:unsup_obj}
\end{align}
where $p_{\LM}$ is a known language model (LM) that describes the distribution of output sequence (e.g., $x_i$ represents different $n$-grams), and $f_\theta$ is a functional of the sequence classifier to be learned, with $\theta$ being its model parameter vector. The key idea is to learn the classifier so that its predicted output $n$-gram distribution is close to the prior $n$-gram distribution $p_{\LM}$ (see \cite{liucheden17uns,yehcheyuyu18uns} for more details). The cost function \eqref{e:unsup_obj} can be viewed as a special case of \eqref{e:general_prob_def} by setting $n_{Y_i}=n_Y$, $y_{ij}=y_j$ and $\phi_i(u) = -p_{LM}(x_i) \log(u)$. Note that the formulation \eqref{e:gen_prob_def_wang_et_all} cannot be directly used here, because of the dependency of the function $f_\theta$ on both $x_i$ and $y_j$.

\paragraph{Risk-averse learning:} Another application where \eqref{e:general_prob_def} arises naturally is the risk-averse learning problem, which is common in finance \cite{russha06opt,wanfanliu17sto, liawanliu2016fin,linfanwanjor18imp,wanliufan16acc, xie2018block}. Let $x_i \in \Re^d$ be a vector consisting of the rewards from $d$ assets at the $i$-th instance, where $0 \le i \le n-1$. The objective in risk-averse learning is to find the optimal weights of the $d$ assets so that the average returns are maximized while the risk is minimized. It could be formulated as the following optimization problem:
\begin{equation}
    \min_{\theta}  
    -\frac{1}{n} 
    \sum_{i = 0}^{n-1} 
    \langle x_i, \theta \rangle 
    \!+\! 
    \frac{1}{n} \sum_{i = 0}^{n-1} 
    \bigg(\langle x_i, \theta \rangle 
    \!-\! 
    \frac{1}{n} \sum_{j = 0}^{n-1} \langle x_j, \theta \rangle \bigg)^2,
\label{e:mean_var_to_prob_def}
\end{equation}
where $\theta\in \Re^d$ denotes the weight vector. The objective function in \eqref{e:mean_var_to_prob_def} seeks a tradeoff between the mean (the first term) and the variance (the second term). It can be understood as a special case of \eqref{e:gen_prob_def_wang_et_all} (which is a further special case of \eqref{e:general_prob_def}) by making the following identifications:
    \begin{align}
        &n_X \!=\! n_Y \!=\!n,\; 
        y_i \!\equiv\! x_i,\; 
        f_\theta(y_j) \!=\! [\theta^\transpose, \; - \langle y_j, \; \theta \rangle ]^{\transpose},\;
        \phi_i(u) \!=\! ( \langle x_i, u_{0:d-1} \rangle \!+\! u_d)^2 \!-\! \langle x_i, u_{0:d-1} \rangle,
        \label{e:finance_identification1}
    \end{align}
where $u_{0:d-1}$ denotes the subvector constructed from the first $d$ elements of $u$, and $u_d$ denotes the $d$-th element. An alternative yet simpler way of dealing with \eqref{e:mean_var_to_prob_def} is to treat the second term in \eqref{e:mean_var_to_prob_def} as a special case of \eqref{e:general_prob_def} by setting 
\begin{align}
    &n_X = n_{Y_i} = n, \;
    y_{ij} \equiv x_{j}, \;
    f_\theta(x_i, y_{ij}) 
        = \langle x_i - y_{ij}, \theta \rangle,\;
    \phi_i(u) = u^2, u \in \Re.
    \label{e:finance_identification2}
\end{align}
In addition, we observe that the first term in \eqref{e:mean_var_to_prob_def} is in standard empirical risk minimization form, which can be dealt with in a straightforward manner. This second formulation leads to algorithms with lower complexity due to the lower dimension of the functions: $\ell = 1$ instead of $\ell = d+1$ in the first formulation. Therefore, we will adopt this formulation in our experiment section (Section \ref{sec:experiments}).

\paragraph{Other applications:}
Cost functions of the form \eqref{e:general_prob_def} also appear in reinforcement learning \cite{ducheli,dai2017learning,dai2018sbe} and other applications \cite{wanfanliu17sto}. In Appendix~\ref{appendix:mdp}, we demonstrate its applications in policy evaluation.

\section{Algorithms}
\label{s:algo}

\subsection{Saddle point formulation}
\label{sec:algorithm:minmax}

Recall from \eqref{e:general_prob_def} that there is an empirical average inside each (nonlinear) merit function $\phi_i(\cdot)$, which prevents the direct application of stochastic gradient descent to \eqref{e:general_prob_def} due to the inherent bias (see Appendix \ref{appendix:SGD_bias} for more discussions). Nevertheless, we will show that minimizing the original cost function \eqref{e:general_prob_def} can be transformed into an equivalent saddle point problem, which brings out all the empirical averages inside $\phi_i(\cdot)$. In what follows, we will use the machinery of \emph{convex conjugate functions} \cite{rockafellar2015convex}. For a function $\psi:\Re^\ell \to \Re$, its convex conjugate function $\psi^*: \Re^\ell \to \Re$ is defined as $\psi^*(y) = \sup_{x \, \in \, \Re^\ell} (\langle x, \, y \rangle  - \psi(x))$. Under certain mild conditions on $\psi(x)$ \cite{rockafellar2015convex}, one can also express $\psi(x)$ as a functional of its conjugate function: $\psi(x) = \sup_{y \, \in \, \Re^\ell} (\langle x, \, y \rangle - \psi^*(y))$. Let $\phi_i^*(w_i)$ denote the conjugate function of $\phi_i(u)$. Then, we can express $\phi_i(u)$ as
    \begin{align}
        \phi_i(u)   
                    &=
                            \sup_{w_i \in \Re^{\ell}}(\langle u, w_i \rangle - \phi_i^*(w_i)),
        \label{e:dual_of_dual}
    \end{align}
where $w_i$ is the corresponding dual variable. Substituting \eqref{e:dual_of_dual} into the original minimization problem \eqref{e:general_prob_def}, we obtain its equivalent min-max problem as:
    \begin{align}
        \min_{\theta} \max_{w} 
        \bigg\{
        L(\theta,w) + g(\theta)
        \defeq
        \frac{1}{n_X}
        \sum_{i=0}^{n_X-1} 
        \Big[
            \Big\langle
                \frac{1}{n_{Y_i}}
                \sum_{j=0}^{n_{Y_i}-1}
                f_{\theta}(x_i,y_{ij}), w_i
            \Big\rangle
            - \phi_i^*(w_i)
        \Big]
        +
        g(\theta)
        \bigg\},
        \label{e:gen_prob_min_max}
    \end{align}
where $w \!\defeq\! \{w_0, \ldots, w_{n_X \!-\! 1}\}$, is a collection of all dual variables.
We note that the transformation of the original problem \eqref{e:general_prob_def} into \eqref{e:gen_prob_min_max} brings out all the empirical averages that are present inside  $\phi_i(\cdot)$. This new formulation allows us to develop stochastic variance reduced algorithms below.

\subsection{Stochastic variance reduced primal-dual algorithm}
\label{sec:algorithm:batch}

One common solution for the min-max problem \eqref{e:gen_prob_min_max} is to alternate between the step of minimization (with respect to the \emph{primal variable} $\theta$) and the step of maximization (with respect to the \emph{dual variable} $w$). However, such an approach generally suffers from high computation complexity because each minimization/maximization step requires a summation over many components and requires a full pass over all the data samples. The complexity of such a batch algorithm would be prohibitively high when the number of data samples (i.e., $n_X$ and $n_{Y_i}$) is large (e.g., they could be larger than one million or even one billion in applications like unsupervised speech recognition \cite{yehcheyuyu18uns}). On the other hand, problem \eqref{e:gen_prob_min_max} indeed has rich structures that we can exploit to develop more efficient solutions. 

To this end, we make the following observations. First, expression \eqref{e:gen_prob_min_max} implies that when $\theta$ is fixed, the maximization over the dual variable $w$ can be decoupled into a total of $n_X$ individual maximizations over  different $w_i$'s. Second, the objective function in each individual maximization (with respect to $w_i$) contains a finite-sum structure over $j$. Third, by \eqref{e:gen_prob_min_max}, for a fixed $w$, the minimization with respect to the primal variable $\theta$ is also performed over an objective function with a finite-sum structure. Based on these observations, we will develop an efficient stochastic variance reduced primal-dual algorithm (named \methodexact). It alternates between (i) a dual step of stochastic variance reduced coordinate ascent and (ii) a primal step of stochastic variance reduced gradient descent. The full algorithm is summarized in Algorithm \ref{alg:svrpda1}, with its key ideas explained below.

\paragraph{Dual step: stochastic variance reduced coordinate ascent.}
To exploit the decoupled dual maximization over $w$ in \eqref{e:gen_prob_min_max}, we can randomly sample an index $i$, and update $w_i$ according to:
    \begin{align}
        w_i^{(k)}
                &=
                        \arg\min_{w_i}
                        \Big\{
                            -
                            \Big\langle
                                \frac{1}{n_{Y_i}}
                                \sum_{j=0}^{n_{Y_i}-1}
                                f_{\theta^{(k-1)}}(x_i, y_{ij}), 
                                w_i
                            \Big\rangle
                            +
                            \phi_i^*(w_i)
                            +
                            \frac{1}{2\alpha_w}
                            \|w_i - w_i^{(k-1)}\|^2
                        \Big\},
        \label{e:spda_batch_dual}
    \end{align}
while keeping all other $w_j$'s ($j\neq i$) unchanged, where $\alpha_w$ denotes a step-size. Note that each step of recursion \eqref{e:spda_batch_dual} still requires a summation over $n_{Y_i}$ components. To further reduce the complexity, we approximate the sum over $j$ by a variance reduced stochastic estimator defined in \eqref{e:svrpda1:delta_w_k} (to be discussed in Section \ref{sec:algorithm:vr}). The dual step in our algorithm is summarized in \eqref{e:svrpda1:w_update}, where we assume that the function $\phi_i^*(w_i)$ is in a simple form so that the argmin could be solved in closed-form. Note that we flip the sign of the objective function to change maximization to minimization and apply coordinate descent. We will still refer to the dual step as ``coordinate ascent" (instead of descent).

\paragraph{Primal step: stochastic variance reduced gradient descent}
We now consider the minimization in \eqref{e:gen_prob_min_max} with respect to $\theta$ when $w$ is fixed. The gradient descent step for minimizing $L(\theta,w)$ is given by
    \begin{align}
        \theta^{(k)}
                    &=
                            \arg\min_{\theta}
                            \bigg\{
                                \Big\langle
                                    \sum_{i=0}^{n_X - 1}
                                    \sum_{j=0}^{n_{Y_i} - 1}
                                    \frac{1}{n_X n_{Y_i}}
                                    f_{\theta^{(k-1)}}'(x_i,y_{ij})w_i^{(k)}, 
                                    \theta
                                \Big\rangle
                                +
                                \frac{1}{2\alpha_{\theta}}
                                \|\theta - \theta^{(k-1)}\|^2
                            \bigg\},
        \label{e:spda_batch_primal}
    \end{align}
where $\alpha_{\theta}$ denotes a step-size. It is easy to see that the update equation \eqref{e:spda_batch_primal} has high complexity, it requires evaluating and averaging the gradient $f'_\theta (\cdot, \cdot)$ at every data sample. To reduce the complexity, we use a variance reduced gradient estimator, defined in \eqref{e:svrpda1:delta_theta_k}, to approximate the sums in \eqref{e:spda_batch_primal} (to be discussed in Section \ref{sec:algorithm:vr}). The primal step in our algorithm is summarized in \eqref{e:svrpda1:theta_update} in Algorithm \ref{alg:svrpda1}.

\begin{algorithm*}[t]
\caption{SVRPDA-I}\label{alg:svrpda1}
{\small
\begin{algorithmic}[1]
\STATE {\bf Inputs:} data $\{(x_i,y_{ij}): 0 \!\le\! i \!<\! n_X, 0 \!\le\! j \!<\! n_{Y_i}\}$; step-sizes $\alpha_\theta$ and $\alpha_w$; \# inner iterations $M$.
\STATE {\bf Initialization:} 
$\tilde{\theta}_{0} \in \Re^d$ and $\tilde{w}_{0} \in \Re^{\ell n_X}$.
\FOR{$s=1,2,\ldots$}
\STATE
Set $\tilde{\theta}\!=\!\tilde{\theta}_{s-1}$, $\theta^{(0)}\!=\!\tilde{\theta}$, $\tilde{w}\!=\!\tilde{w}_{s-1}$, $w^{(0)} \!=\! \tilde{w}_{s-1}$, and compute the batch quantities (for each $0 \!\le\! i \!<\! n_X$):
    \begin{align}
        U_0
                    &=
                            \!
                            \sum_{i=0}^{n_X\!-\!1}
                            \!
                            \sum_{j=0}^{n_{Y_i}\!-\!1}
                            \!\!
                            \frac{f_{\tilde{\theta}}'(x_i,y_{ij})w_i^{(0)}}{n_X n_{Y_i}}
                            ,
                            \;\;
        \overline{f}_i(\tilde{\theta})
                    \defeq
                            \!
                            \sum_{j=0}^{n_{Y_i}\!-\!1}
                            \!\!
                            \frac{f_{\tilde{\theta}}(x_i,y_{ij})}{n_{Y_i}},
                            \;\;
        \overline{f}_i'(\tilde{\theta})
                    =
                            \!
                            \sum_{j=0}^{{n_{Y_i}\!-\!1}}
                            \!\!
                            \frac{f_{\tilde{\theta}}'(x_i,y_{ij})}{n_{Y_i}}.
        \label{e:svrpda1:batch_quantities}
    \end{align}
\FOR{$k=1$ {\bfseries to} $M$}
\STATE 
Randomly sample $i_k \in \{0,\ldots, n_X\!-\!1\}$ and then $j_k \in \{0, \ldots, n_{Y_{i_k}}\!-\!1\}$ at uniform.
\STATE 
Compute the stochastic variance reduced gradient for dual update:
    \begin{align}
        \delta_k^{w}
                    &=
                            f_{\theta^{(k-1)}}(x_{i_k}, y_{i_k j_k})
                            -
                            f_{\tilde{\theta}}(x_{i_k}, y_{i_k j_k})
                            +
                            \overline{f}_{i_k}(\tilde{\theta}).
        \label{e:svrpda1:delta_w_k}
    \end{align}
\STATE Update the dual variables:
\begin{align}
    w^{(k)}_{i} 
                    &= 
                            \begin{cases}
                                \displaystyle 
                                \argmin_{w_i} 
                                \Big [ 
                                    - 
                                    \langle  
                                        \delta_k^{w}, 
                                        \, 
                                        w_i
                                    \rangle 
                                    + 
                                    \phi_i^*(w_i)
                                    +  
                                    \frac{1}{2\alpha_w} \|w_i - w^{(k-1)}_{i} \|^2
                                \Big ]
                                & \text{if } i = i_k
                                \\
                                w^{(k-1)}_{i}
                                & \text{if } i \neq i_k
                            \end{cases}.
        \label{e:svrpda1:w_update}
\end{align}
\STATE 
Update $U_k$ (primal batch gradient at $\tilde{\theta}$ and $w^{(k)}$) according to the following recursion:
\begin{align}
    U_k
            = 
                    U_{k-1}
                    + 
                    \frac{1}{n_X}
                    \overline{f}'_{i_k}(\tilde{\theta})
                    \big(
                        w^{(k)}_{i_k} - w^{(k-1)}_{i_k}   
                    \big).
    \label{e:svrpda1:Ukupdate}
\end{align}
\STATE 
Randomly sample $i_k' \in \{0,\ldots,n_X-1\}$ and then $j_k' \in \{0, \ldots, n_{Y_{i_k'}}-1\}$, independent of $i_k$ and $j_k$, and compute the stochastic variance reduced gradient for primal update:
    \begin{align}
        \delta_k^{\theta}
                &=
                        f_{\theta^{(k-1)}}'(x_{i_k'},y_{i_k' j_k'})w_{i_k'}^{(k)}
                        -
                        f_{\tilde{\theta}}'(x_{i_k'},y_{i_k' j_k'}) w_{i_k'}^{(k)}
                        +
                       U_k.
        \label{e:svrpda1:delta_theta_k}
    \end{align}
\STATE 
Update the primal variable:
    \begin{equation}
        \theta^{(k)} 
                = 
                        \displaystyle 
                        \argmin_{\theta} 
                        \Big[ 
                            \langle 
                                \delta_k^{\theta}, 
                                \, 
                                \theta
                            \rangle 
                            + 
                            g(\theta)
                            +
                            \frac{1}{2\alpha_\theta} 
                            \|\theta - \theta^{(k-1)}\|^2 
                        \Big].
        \label{e:svrpda1:theta_update}
    \end{equation}
\ENDFOR
\STATE 
{\bf{Option I:}} Set $\tilde{w}_{s} = w^{(M)}$ and $\tilde{\theta}_s = \theta^{(M)}$.
\STATE 
{\bf{Option II:}} Set $\tilde{w}_s = w^{(M)}$ and $\tilde{\theta}_s = \theta^{(t)}$ for randomly sampled $t \in \{0,\ldots,M\!-\!1\}$.
\ENDFOR
\STATE {\bf Output:} $\tilde{\theta}_s$ at the last outer-loop iteration.
\end{algorithmic}
}
\end{algorithm*}

\subsection{Low-complexity stochastic variance reduced estimators}
\label{sec:algorithm:vr}

We now proceed to explain the design of the variance reduced gradient estimators in both the dual and the primal updates. The main idea is inspired by the stochastic variance reduced gradient (SVRG) algorithm \cite{johzha13acc}. Specifically, for a vector-valued function $h(\theta)=\frac{1}{n}\sum_{i=0}^{n - 1} h_i(\theta)$, we can construct its SVRG estimator $\delta_k$ at each iteration step $k$ by using the following expression:
    \begin{align}
        \delta_k
                    &=
                            h_{i_k}(\theta) - h_{i_k}(\tilde{\theta}) + h(\tilde{\theta}),
        \label{e:algorithm:svrg_general}
    \end{align}
where $i_k$ is a randomly sampled index from $\{0,\ldots,n-1\}$, and $\tilde{\theta}$ is a reference variable that is updated \emph{periodically} (to be explained below). The first term $h_i(\theta)$ in \eqref{e:algorithm:svrg_general} is an unbiased estimator of $h(\theta)$ and is generally known as the \emph{stochastic gradient} when $h(\theta)$ is the gradient of a certain cost function. The last two terms in \eqref{e:algorithm:svrg_general} construct a control variate that has zero mean and is negatively correlated with $h_i(\theta)$, which keeps $\delta_k$ unbiased while significantly reducing its variance. The reference variable $\tilde{\theta}$ is usually set to be a delayed version of $\theta$: for example, after every $M$ updates of $\theta$, it can be reset to the most recent iterate of $\theta$. Note that there is a trade-off in the choice of $M$: a smaller $M$ further reduces the variance of $\delta_k$ since $\tilde{\theta}$ will be closer to $\theta$ and the first two terms in \eqref{e:algorithm:svrg_general} cancel more with each other; on the other hand, it will also require more frequent evaluations of the costly batch term $h(\tilde{\theta})$, which has a complexity of $O(n)$. 

Based on \eqref{e:algorithm:svrg_general}, we develop two stochastic variance reduced estimators, \eqref{e:svrpda1:delta_w_k} and \eqref{e:svrpda1:delta_theta_k}, to approximate the finite-sums in \eqref{e:spda_batch_dual} and \eqref{e:spda_batch_primal}, respectively. The dual gradient estimator $\delta_k^w$ in \eqref{e:svrpda1:delta_w_k} is constructed in a standard manner using \eqref{e:algorithm:svrg_general}, where the reference variable $\tilde{\theta}$ is a delayed version of $\theta^{(k)}$\footnote{As in \cite{johzha13acc}, we also consider Option II wherein $\tilde{\theta}$ is randomly chosen from the previous $M$ $\theta^{(k)}$'s.}.
On the other hand, the primal gradient estimator $\delta_k^{\theta}$ in \eqref{e:svrpda1:delta_theta_k} is constructed by using reference variables $(\tilde{\theta}, w^{(k)})$; that is, we uses the most recent $w^{(k)}$ as the dual reference variable, without any delay. As discussed earlier, such a choice leads to a smaller variance in the stochastic estimator $\delta_\theta^k$ at a potentially higher computation cost (from more frequent evaluation of the batch term). Nevertheless, we are able to show that, with the dual coordinate ascent structure in our algorithm, the batch term $U_k$ in \eqref{e:svrpda1:delta_theta_k}, which is the summation in \eqref{e:spda_batch_primal} evaluated at $(\tilde{\theta}, w^{(k)})$, can be computed efficiently. To see this, note that, after each dual update step in \eqref{e:svrpda1:w_update}, only one term inside this summation in \eqref{e:spda_batch_primal}, has been changed, i.e., the one associated with $i=i_k$. Therefore, we can correct $U_k$ for this term by using recursion \eqref{e:svrpda1:Ukupdate}, which only requires an extra $O(d \ell)$-complexity per step (same complexity as \eqref{e:svrpda1:delta_theta_k}).

Note that \methodexact~(Algorithm \ref{alg:svrpda1}) requires to compute and store all the $\overline{f}_i'(\tilde{\theta})$ in \eqref{e:svrpda1:batch_quantities}, which is $O(n_X d \ell)$-complexity in storage and could be expensive in some applications. To avoid the cost, we develop a variant of Algorithm \ref{alg:svrpda1}, named as \methodapprox~(see Algorithm \ref{alg:svrpda2} in the supplementary material), by approximating $\overline{f}_{i_k}(\tilde{\theta})$ in \eqref{e:svrpda1:Ukupdate} with $f_{\tilde{\theta}}'(x_{i_k},y_{i_kj_k''})$, where $j_k''$ is another randomly sampled index from $\{0,\ldots,n_{Y_i}-1\}$, independent of all other indexes. By doing this, we can significantly reduce the memory requirement from $O(n_X d \ell)$ in \methodexact~to $O(d + n_X \ell)$ in \methodapprox~(see Section \ref{s:theo:storage}). In addition, experimental results in Section \ref{sec:experiments} will show that such an approximation only cause slight performance loss compared to that of \methodexact~algorithm.

\section{Theoretical Analysis}
\label{s:theo}

\begin{table}[t]
\caption{The total complexities of different stochastic composition optimization algorithms. For C-SAGA, $\alpha=2/3$ in the minibatch setting and $\alpha=1$ when batch-size=1. In the bound for ASCVRG, the dependency on $\kappa$ has been dropped since it was not reported in \cite{linfanwanjor18imp}.}
\label{tab:comparison_complexity}
\begin{center}
\tiny
\setlength{\tabcolsep}{1pt}
\renewcommand{\arraystretch}{1.5}
\begin{tabular}{c|c|c|c|c|c}
\toprule
Methods & SVRPDA-I (Ours) & Comp-SVRG \cite{liawanliu2016fin} & C-SAGA \cite{zhaxia19com} & MSPBE-SVRG/SAGA \cite{ducheli} & ASCVRG \cite{linfanwanjor18imp}
\\
\midrule
General: problem \eqref{e:general_prob_def}  & $(n_Xn_Y \!+\! n_X \kappa)\!\ln\!\frac{1}{\epsilon}$  & \cancel{\phantom{Nothing}} & \cancel{\phantom{Nothing}} & \cancel{\phantom{Nothing}} & \cancel{\phantom{Nothing}}
\\
Special: problem \eqref{e:gen_prob_def_wang_et_all}  & $(n_X \!\!+\!\! n_Y \!\!+\!\! n_X \kappa)\! \ln\!\frac{1}{\epsilon}$  & $(n_X\!\!+\!\!n_Y\!\!+\!\!\kappa^3)\!\ln\!\frac{1}{\epsilon}$ & $(n_X\!\!+\!\!n_Y\!\!+\!\!(n_X\!\!+\!\!n_Y)^{\alpha}\kappa)\!\ln\!\frac{1}{\epsilon}$ & \cancel{\phantom{Nothing}} & $(n_X\!\!+\!\!n_Y)\!\ln\!\frac{1}{\epsilon} \!\!+\!\! \frac{1}{\epsilon^3}$
\\
Special: \eqref{e:gen_prob_def_wang_et_all} \& $n_X\!=\!1$ & $(n_Y \!+\! \kappa)\ln\frac{1}{\epsilon}$ & $(n_Y\!+\!\kappa^3)\ln\frac{1}{\epsilon}$ & $(n_Y\!+\!n_Y^{\alpha}\kappa)\ln\frac{1}{\epsilon}$ & $(n_Y\!\!+\!\!\kappa^2)\ln\frac{1}{\epsilon}$ & $n_Y \ln\frac{1}{\epsilon} \!+\! \frac{1}{\epsilon^3}$ \\
\bottomrule
\end{tabular}
\end{center}
\label{default}
\end{table}%

\subsection{Computation complexity}
\label{s:theo:convergence}
We now perform convergence analysis for the SVRPDA-I algorithm and also derive their complexities in computation and storage. To begin with, we first introduce the following assumptions.
\begin{assumption}
\label{a:main:g_phi:sc_sm}
The function $g(\theta)$ is $\mu$-strongly convex in $\theta$, and each $\phi_i$ is $1/\gamma$-smooth.
\end{assumption}
\begin{assumption}
\label{a:main:phi:lipschitz}
The merit functions $\phi_i(u)$ are Lipschitz with a uniform constant $B_w$:
        \begin{align}
            | \phi_i(u) - \phi_i(u') |
                    &\le
                            B_w \| u - u' \|, \quad \forall u, u'; \; \forall i=0,\ldots, n_X-1.
                            \nn
        \end{align}
\end{assumption}
\begin{assumption}
\label{a:main:f:smooth_bg}
$f_{\theta}(x_i, y_{ij})$ is $B_\theta$-smooth in $\theta$, and has bounded gradients with constant $B_f$:
    \begin{align}
        \| f'_{\theta_1} (x_i, y_{ij}) - f'_{\theta_2}(x_i, y_{ij}) \| \leq B_{\theta} \|\theta_1  - \theta_2\|,
        \quad
        \| f'_\theta(x_i, y_{ij}) \| \leq B_f,
        \quad
        \forall \theta, \theta_1, \theta_2,
        \; \forall i,j.
        \nn
    \end{align}
\end{assumption}
\begin{assumption}
\label{a:main:L:convex}
For each given $w$ in its domain, the function $L(\theta, w)$ defined in \eqref{e:gen_prob_min_max} is convex in $\theta$:
\begin{align}
    L(\theta_1, w) - L(\theta_2, w) \geq \langle L'_\theta(\theta_2, w), \,\, \theta_1 - \theta_2 \rangle, \quad \forall \theta_1, \theta_2.
    \nn
\end{align}
\end{assumption}
The above assumptions are commonly used in existing compositional optimization works \cite{liawanliu2016fin,linfanwanjor18imp,wanfanliu17sto,wanliufan16acc,zhaxia19com}.
Based on these assumptions, we establish the non-asymptotic error bounds for \methodexact~(using either Option I or Option II in Algorithm \ref{alg:svrpda1}). The main results are summarized in the following theorems, and their proofs can be found in Appendix~\ref{appendix:proof_svrpda1}.
    \begin{theorem}
    \label{t:final_bound_SVRPDA_I_opt1}
    Suppose Assumptions \ref{a:main:g_phi:sc_sm}--\ref{a:main:L:convex} hold. If in Algorithm \ref{alg:svrpda1} (with Option I) we choose
        \begin{align}
            \alpha_{\theta}
                &=
                        \frac{
                            1
                        }
                        {
                            n_X \mu
                            (64 \kappa + 1)
                        }, \quad
        \alpha_w
                =
                        \frac{n_X \mu}{\gamma}
                        \alpha_\theta,
                        \quad
        M
                = 
                        \big\lceil 
                            78.8n_X 
                            \kappa 
                            \!+\! 
                            1.3 n_X \!+\! 1.3 
                        \big\rceil
                        \nn
        \end{align}
    where $\lceil x \rceil$ denotes the roundup operation and $\kappa = B_f^2 / \gamma \mu + B_w^2B_\theta^2 / \mu^2$,
    then the Lyapunov function $P_s \eqdef \Expect\|\tilde{\theta}_s - \theta^*\|^2 
    + \frac{\gamma}{\mu} \cdot \frac{64\kappa + 3}{64n_X \kappa + n_X + 1} \Expect \|\tilde{w}_s - w^*\|^2$ satisfies $P_s \le (3/4)^s P_0$. Furthermore, the overall computational cost (in number of oracle calls\footnote{One \emph{oracle call} is defined as querying $f_\theta$, $f'_\theta$, or $\phi_i(u)$ for any $0 \leq i < n$ and $u \in \Re^\ell$.}) for reaching $P_s \le \epsilon$ is upper bounded by
        \begin{align}
            O\big(
                ( n_X n_Y + n_X \kappa + n_X ) \ln (1/\epsilon)
            \big).
            \label{e:thm:svrpda1_opt1:total_complexity}
        \end{align}
    where, with a slight abuse of notation, $n_Y$ is defined as $n_Y = (n_{Y_0} + \cdots + n_{Y_{n_X-1}})/n_X$.
    \end{theorem}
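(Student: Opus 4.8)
The plan is to reduce the outer-loop claim $P_s \le (3/4)^s P_0$ to a single per-epoch contraction $P_s \le \tfrac34 P_{s-1}$, which then iterates trivially, while the complexity bound \eqref{e:thm:svrpda1_opt1:total_complexity} follows by multiplying the per-epoch oracle cost by the $O(\ln(1/\epsilon))$ epochs needed to reach $P_s \le \epsilon$. The heart of the argument is a one-epoch analysis built from two one-step inequalities — one for the dual prox-ascent \eqref{e:svrpda1:w_update} and one for the primal prox-descent \eqref{e:svrpda1:theta_update} — which I would combine into a single Lyapunov recursion and then sum over the inner index $k = 1,\ldots,M$.

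For the dual step I would use that $\phi_i$ being $1/\gamma$-smooth makes each $\phi_i^*$ $\gamma$-strongly convex, so the proximal update for the sampled coordinate $i_k$ contracts toward $w^*$. Writing the optimality condition of \eqref{e:svrpda1:w_update} and taking expectation over the uniform coordinate $i_k$ (which contributes the $1/n_X$ averaging, hence the $n_X$ factors in $M$ and the step-sizes), I would bound $\Expect\|w^{(k)} - w^*\|^2$ in terms of $\|w^{(k-1)} - w^*\|^2$, the primal gap, and the variance of $\delta_k^w$. Since $\delta_k^w$ is an unbiased estimator of $\overline{f}_{i_k}(\theta^{(k-1)})$ and $f_\theta$ is $B_f$-Lipschitz in $\theta$ (Assumption \ref{a:main:f:smooth_bg}), its variance is controlled by the SVRG drift $B_f^2\|\theta^{(k-1)} - \tilde{\theta}\|^2$. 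Symmetrically, for the primal step I would use the $\mu$-strong convexity of $g$ together with the convexity of $L(\cdot,w)$ (Assumption \ref{a:main:L:convex}) to contract $\Expect\|\theta^{(k)} - \theta^*\|^2$; here the recursion \eqref{e:svrpda1:Ukupdate} is precisely what keeps $U_k$ equal to the exact batch gradient at $(\tilde{\theta}, w^{(k)})$, so $\delta_k^\theta$ is unbiased, and the $B_\theta$-smoothness of $f'_\theta$ plus the bound $\|w_i\| \le B_w$ (from Assumption \ref{a:main:phi:lipschitz}) gives the variance bound $B_w^2 B_\theta^2\|\theta^{(k-1)} - \tilde{\theta}\|^2$. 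These two drift coefficients are exactly the two terms in $\kappa = B_f^2/\gamma\mu + B_w^2 B_\theta^2/\mu^2$.

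I would then form the weighted Lyapunov function $V_k = \Expect\|\theta^{(k)} - \theta^*\|^2 + c\,\Expect\|w^{(k)} - w^*\|^2$ with $c = \tfrac{\gamma}{\mu}\cdot\tfrac{64\kappa+3}{64 n_X\kappa + n_X + 1}$ as in the statement. The key cancellation is of the bilinear coupling terms $\langle \overline{f}_{i}(\theta) - \overline{f}_i(\theta^*),\, w_i - w_i^*\rangle$ appearing in both one-step inequalities: the relation $\alpha_w = (n_X\mu/\gamma)\alpha_\theta$ together with the weight $c$ is what makes these cross terms offset rather than accumulate. Summing the combined inequality over $k=1,\ldots,M$ and using $\theta^{(0)}=\tilde{\theta}$ then lets me absorb the accumulated drift $\sum_k \|\theta^{(k-1)}-\tilde{\theta}\|^2$; with $\alpha_\theta = 1/(n_X\mu(64\kappa+1))$ chosen small enough, the drift coefficient stays below the contraction margin and the telescoped primal and dual progress dominates, so that $M = \lceil 78.8 n_X\kappa + 1.3 n_X + 1.3\rceil$ yields the factor $3/4$ under Option I.

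The main obstacle is the simultaneous control of the saddle-point coupling and the SVRG drift. Unlike a pure-minimization SVRG proof, both recursions feed the same drift term $\|\theta^{(k-1)}-\tilde{\theta}\|^2$ while also producing cross terms that could destabilize the contraction, and the asymmetry that the primal uses the fresh $w^{(k)}$ (no delay) whereas the dual uses the delayed $\tilde{\theta}$ must be tracked carefully for the bookkeeping to close. Once the per-epoch contraction holds, the complexity is a routine count: each epoch spends $O(n_X n_Y)$ oracle calls on the batch quantities \eqref{e:svrpda1:batch_quantities} and $O(M) = O(n_X\kappa + n_X)$ calls in the inner loop, since each of \eqref{e:svrpda1:delta_w_k}, \eqref{e:svrpda1:Ukupdate}, and \eqref{e:svrpda1:delta_theta_k} uses only $O(1)$ oracle calls, and $O(\ln(1/\epsilon))$ epochs suffice, giving \eqref{e:thm:svrpda1_opt1:total_complexity}.
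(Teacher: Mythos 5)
Your one-step ingredients match the paper's: the dual and primal prox inequalities, the two drift terms $B_f^2\|\theta^{(k-1)}-\tilde{\theta}\|^2$ and $B_w^2B_\theta^2\|\theta^{(k-1)}-\tilde{\theta}\|^2$ (which are indeed the two pieces of $\kappa$), the bound $\|w_i\|\le B_w$ extracted from Assumption \ref{a:main:phi:lipschitz}, the weighted Lyapunov function, and the oracle count are all correct in spirit. (A minor caveat: the bilinear coupling terms do not literally cancel under the weight $c$; the paper bounds the whole collection of $L$-terms using convexity of $L$ in $\theta$, Cauchy--Schwarz and Young's inequality, which generates extra $\|\theta^{(k)}-\theta^{(k-1)}\|^2$ and $\|w^{(k)}-w^{(k-1)}\|^2$ terms that must then be absorbed by the corresponding negative prox terms. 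That is a refinement of your sketch rather than a different idea.)

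The genuine gap is in the final assembly. You propose to sum the combined inequality over $k=1,\ldots,M$ and let the telescoped progress absorb the accumulated drift $\sum_k\|\theta^{(k-1)}-\tilde{\theta}\|^2$. That mechanism proves Option II, where $\tilde{\theta}_s$ is a uniformly random inner iterate and Jensen converts $\frac{1}{M}\sum_k\Expect\|\theta^{(k)}-\theta^*\|^2$ into $\Expect\|\tilde{\theta}_s-\theta^*\|^2$; it does not prove Option I, where $\tilde{\theta}_s=\theta^{(M)}$ and $\tilde{w}_s=w^{(M)}$ are \emph{last} iterates. After summing and dropping the nonnegative intermediate sum, the best you get for the last iterate is of the form $\big(\tfrac{1}{2\alpha_\theta}+\mu'\big)\Expect\|\theta^{(M)}-\theta^*\|^2\le\big(\tfrac{1}{2\alpha_\theta}+d+Md''\big)\|\tilde{\theta}-\theta^*\|^2$, and since $\tfrac{1}{2\alpha_\theta}=\tfrac{n_X\mu(64\kappa+1)}{2}$ dwarfs the strong-convexity margin, this ratio is $1+o(1)$, never $3/4$, no matter how $M$ is tuned. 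The paper instead keeps the recursion in per-step form: it splits the drift via $\|\theta^{(k-1)}-\tilde{\theta}\|^2\le 2\|\theta^{(k-1)}-\theta^*\|^2+2\|\tilde{\theta}-\theta^*\|^2$ so that one piece is absorbed into the per-step ratio $r_P\le 1-\frac{1}{78.8n_X\kappa+1.3n_X+1.3}$ and the other becomes a forcing term proportional to $P_{s-1}$, yielding $P_{s,k}\le r_P\,P_{s,k-1}+c\,P_{s-1}$; geometric unrolling then gives $P_{s,M}\le r_P^M P_{s-1}+\frac{c}{1-r_P}P_{s-1}$, and $M=\lceil 78.8n_X\kappa+1.3n_X+1.3\rceil$ is chosen precisely so that $r_P^M\le e^{-1}$ while $\frac{c}{1-r_P}\le\frac{1}{4}$, whence $e^{-1}+\frac{1}{4}<\frac{3}{4}$. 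You need this geometric unrolling (or some other genuine last-iterate argument) to close the Option I claim; as written, your plan establishes only the Option II variant.
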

    \begin{theorem}
    \label{t:final_bound_SVRPDA_I_opt2}
    Suppose Assumptions \ref{a:main:g_phi:sc_sm}--\ref{a:main:L:convex} hold. If in Algorithm \ref{alg:svrpda1} (with Option II) we choose
        \begin{equation}
        \alpha_\theta 
                = 
                        \Big( \frac{25 B_f^2}{\gamma} \! + \! {10 B_\theta B_w}\! + \! \frac{80 B_w^2 B_\theta^2}{\mu} \Big)^{-1},
        \quad \alpha_w 
                = 
                        \frac{\mu}{40 B_f^2},
        \quad M 
                = 
                        \max \bigg( \frac{10}{\alpha_\theta \mu} \,, \frac{2 {n_X}}{\alpha_w \gamma} \,, 4 n_X \bigg),
                        \nn
        \end{equation}
    then $P_s \eqdef \Expect\|\tilde{\theta}_s \!-\! \theta^*\|^2 
    \!+\!\frac{\gamma}{n_X \mu} \Expect \|\tilde{w}_s \!-\! w^*\|^2 \le (5/8)^s P_0$. Furthermore, let $\kappa = \frac{B_f^2}{\gamma \mu} \!+\! \frac{B_w^2B_\theta^2}{\mu^2}$. Then, the overall computational cost (in number of oracle calls) for reaching $P_s \le \epsilon$ is upper bounded by
        \begin{align}
            O\big(
                ( n_X n_Y + n_X \kappa + n_X ) \ln (1/\epsilon)
            \big).
            \label{e:thm:svrpda1_opt2:total_complexity}
        \end{align}
    \end{theorem}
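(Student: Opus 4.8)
The plan is to track the combined primal-dual error through one outer loop (epoch) of Algorithm~\ref{alg:svrpda1} and show it contracts by the factor $5/8$, then convert this linear rate into the oracle-call bound. I will work with the saddle point $(\theta^*, w^*)$ of \eqref{e:gen_prob_min_max}, which exists and is unique because $g$ is $\mu$-strongly convex (primal side) and, by conjugate duality, the $1/\gamma$-smoothness of each $\phi_i$ in Assumption~\ref{a:main:g_phi:sc_sm} makes each $\phi_i^*$ $\gamma$-strongly convex (dual side). Two structural facts are used repeatedly: the optimality condition $w_i^* = \nabla\phi_i(\overline{f}_i(\theta^*))$, and the bound $\|w_i^{(k)}\| \le B_w$ for all $k$, which follows because Assumption~\ref{a:main:phi:lipschitz} ($\phi_i$ is $B_w$-Lipschitz) forces $\dom\,\phi_i^*$ into the ball of radius $B_w$.

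First I would derive two per-step inequalities. For the dual step \eqref{e:svrpda1:w_update}, since only coordinate $i_k$ is updated as a proximal step on the $\gamma$-strongly convex $\phi_i^*$ with $\delta_k^w$ an unbiased estimator of $\overline{f}_{i_k}(\theta^{(k-1)})$, I obtain a recursion for $\Expect\|w^{(k)}-w^*\|^2$ in which progress toward $w^*$ is driven by $\|\theta^{(k-1)}-\theta^*\|^2$ (through the optimality condition) and degraded by the variance of $\delta_k^w$, which by the bounded-gradient Assumption~\ref{a:main:f:smooth_bg} is at most $O(B_f^2\,\|\theta^{(k-1)}-\tilde\theta\|^2)$. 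Because a single coordinate is touched per step, the expected contraction carries a $1/n_X$ factor, which is the origin of the $n_X$ scalings in $\alpha_w$ and $M$. For the primal step \eqref{e:svrpda1:theta_update}, a proximal step on the $\mu$-strongly convex $g$ with $\delta_k^\theta$ unbiased for $L_\theta'(\theta^{(k-1)}, w^{(k)})$ yields a recursion for $\Expect\|\theta^{(k)}-\theta^*\|^2$; the crucial point is that the control variate in \eqref{e:svrpda1:delta_theta_k} uses the undelayed $w^{(k)}$ together with $U_k = L_\theta'(\tilde\theta, w^{(k)})$, so the estimator is exactly unbiased and its variance, via the $B_\theta$-smoothness of $f$ and $\|w^{(k)}\|\le B_w$, is bounded by $O(B_\theta^2 B_w^2\,\|\theta^{(k-1)}-\tilde\theta\|^2)$ with no leftover dual-delay term. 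Convexity of $L$ in $\theta$ (Assumption~\ref{a:main:L:convex}) then lower-bounds $\langle L_\theta'(\theta^{(k-1)}, w^{(k)}),\, \theta^{(k-1)}-\theta^*\rangle$ by the gap $L(\theta^{(k-1)}, w^{(k)}) - L(\theta^*, w^{(k)})$, which couples cleanly to the dual inequality.

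Next I would combine the two inequalities into a single-step bound on the potential $\|\theta^{(k)}-\theta^*\|^2 + c\,\|w^{(k)}-w^*\|^2$, choosing the weight $c\propto \gamma/(n_X\mu)$ so that the primal-dual cross terms cancel and the variance terms (all proportional to $\|\theta^{(k-1)}-\tilde\theta\|^2$) are absorbed. For Option~II the return is asymmetric: $\tilde\theta_s=\theta^{(t)}$ for a uniform random $t$, while $\tilde w_s = w^{(M)}$. Summing the per-step inequalities over $k=1,\ldots,M$ telescopes the dual distances down to the endpoint $\|w^{(M)}-w^*\|^2$ and produces $\frac1M\sum_k\|\theta^{(k-1)}-\theta^*\|^2 = \Expect\|\tilde\theta_s-\theta^*\|^2$ for the primal, while $\|\theta^{(k-1)}-\tilde\theta\|^2 \le 2\|\theta^{(k-1)}-\theta^*\|^2 + 2\|\tilde\theta-\theta^*\|^2$ routes the variance terms back into these same quantities. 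Plugging in the prescribed $\alpha_\theta$, $\alpha_w$, and $M=\max(10/(\alpha_\theta\mu),\,2n_X/(\alpha_w\gamma),\,4n_X)$ then forces the per-epoch contraction constant to $5/8$, giving $P_s\le(5/8)^s P_0$.

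Finally, for the complexity I would count oracle calls per outer loop: the batch quantities \eqref{e:svrpda1:batch_quantities} cost $O(n_X n_Y)$ calls (one pass over all samples), while each of the $M$ inner iterations uses a constant number of $f_\theta,f_\theta'$ evaluations plus the $O(d\ell)$ recursion \eqref{e:svrpda1:Ukupdate}, for $O(M)=O(n_X\kappa + n_X)$ calls. Since $P_s\le(5/8)^s P_0$ needs only $s=O(\ln(1/\epsilon))$ outer loops to reach $P_s\le\epsilon$, multiplying yields the stated $O((n_X n_Y + n_X\kappa + n_X)\ln(1/\epsilon))$. I expect the main obstacle to be the combined one-step inequality: balancing the single-coordinate $1/n_X$ dual contraction against the full primal step, controlling the $\delta_k^\theta$ variance that mixes $\theta$- and $w$-errors, and selecting $c$ and the step sizes so that every cross term and variance term is dominated — the Option~II averaging then has to be threaded through this balance without breaking the telescoping.
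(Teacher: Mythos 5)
Your proposal follows essentially the same route as the paper's proof: per-step proximal inequalities for the dual coordinate update and the primal update (the paper's Lemma \ref{t:lemma_3}), variance bounds $O(B_f^2\|\theta^{(k-1)}-\tilde\theta\|^2)$ for $\delta_k^w$ and $O(B_w^2B_\theta^2\|\theta^{(k-1)}-\tilde\theta\|^2)$ for $\delta_k^\theta$ (using $\|w_i\|\le B_w$ from the Lipschitz assumption on $\phi_i$), coupling of the cross terms through convexity/linearity of $L$, a weighted potential with weight $\gamma/(n_X\mu)$, summation over the $M$ inner iterations, the bound $\|\theta^{(k-1)}-\tilde\theta\|^2\le 2\|\theta^{(k-1)}-\theta^*\|^2+2\|\tilde\theta-\theta^*\|^2$, and the same oracle-call accounting. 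All of that is sound and matches the paper.

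The one step that would fail as you describe it is the epoch-end accounting for the dual variable. You say the summation ``telescopes the dual distances down to the endpoint $\|w^{(M)}-w^*\|^2$,'' and Option II indeed sets $\tilde w_s=w^{(M)}$. But the per-step dual inequality only contracts by a factor $1-O(1/(n_X\kappa))$ (one coordinate touched per step), so after telescoping the endpoint coefficient on the left exceeds the initial-point coefficient on the right by only $O(\gamma/n_X)$; discarding the intermediate sum $\sum_{k=1}^{M-1}\Expect\|w^{(k)}-w^*\|^2$ and keeping only the endpoint yields a dual contraction of $1-O(1/(n_X\kappa))$ per epoch, not $5/8$, and the $M$ accumulated variance terms on the right are then not dominated. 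The constant-factor decrease for the dual comes precisely from the intermediate sum, i.e., from $\frac1M\sum_{k=1}^{M}\Expect\|w^{(k)}-w^*\|^2$, which equals $\Expect\|\tilde w_s-w^*\|^2$ only if $\tilde w_s$ is a uniformly random inner iterate. This is exactly how the paper closes the argument: its proof silently redefines $\tilde w_s$ (and $\tilde w_{s-1}$) as random iterates, noting they ``are not used in the algorithm,'' and proves the $5/8$ recursion for that averaged quantity. You need the same move (or an equivalent averaging of the dual error) — treating $\tilde w_s$ as the literal endpoint $w^{(M)}$ breaks the $5/8$ rate for the dual component of $P_s$.
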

The above theorems show that the Lyapunov function $P_s$ for \methodexact~converges to zero at a linear rate when either Option I or II is used. Since $\Expect\|\tilde{\theta}_s-\theta^*\|^2 \le P_s$, they imply that the computational cost (in number of oracle calls) for reaching $\Expect\|\tilde{\theta}_s-\theta^*\|^2 \le \epsilon$ is also upper bounded by \eqref{e:thm:svrpda1_opt1:total_complexity} and \eqref{e:thm:svrpda1_opt2:total_complexity}.

\paragraph{Comparison with existing composition optimization algorithms}
Table \ref{tab:comparison_complexity} summarizes the complexity bounds for our \methodexact~algorithm and compares them with existing stochastic composition optimization algorithms. First, to our best knowledge, none of the existing methods consider the general objective function \eqref{e:general_prob_def} as we did. Instead, they consider its special case \eqref{e:gen_prob_def_wang_et_all}, and even in this special case, our algorithm still has better (or comparable) complexity bound than other methods. For example, our bound is better than that of \cite{liawanliu2016fin} since $\kappa^2 > n_X$ generally holds, and it is better than that of ASCVRG, which does not achieve linear convergence rate (as no strong convexity is assumed). 
In addition, our method has better complexity than C-SAGA algorithm when $n_X=1$ (regardless of mini-batch size in C-SAGA), and it is better than C-SAGA for \eqref{e:gen_prob_def_wang_et_all} when the mini-batch size is $1$.\footnote{In Appendix \ref{appendix:mdp}, we also show that our algorithms outperform C-SAGA in experiments.} However, since we have not derived our bound for mini-batch setting, it is unclear which one is better in this case, and is an interesting topic for future work.
One notable fact from Table \ref{tab:comparison_complexity} is that in this special case \eqref{e:gen_prob_def_wang_et_all}, the complexity of \methodexact~is reduced from $O((n_Xn_Y \!+\!n_X\kappa)\ln\frac{1}{\epsilon})$ to $O((n_X\!+\!n_Y\!+\!n_X\kappa)\ln\frac{1}{\epsilon})$. This is because the complexity for evaluating the batch quantities in \eqref{e:svrpda1:batch_quantities} (Algorithm~\ref{alg:svrpda1}) can be reduced from $O(n_X n_Y)$ in the general case \eqref{e:general_prob_def} to $O(n_X+n_Y)$ in the special case \eqref{e:gen_prob_def_wang_et_all}. To see this, note that $f_{\theta}$ and $n_{Y_i}=n_Y$ become independent of $i$ in \eqref{e:gen_prob_def_wang_et_all} and \eqref{e:svrpda1:batch_quantities}, meaning that we can factor $U_0$ in \eqref{e:svrpda1:batch_quantities} as $U_0 = \frac{1}{n_X n_Y} \sum_{j=0}^{n_Y-1}f_{\tilde{\theta}}'(y_j)\sum_{i=0}^{n_X}w_i^{(0)}$,
where the two sums can be evaluated independently with complexity $O(n_Y)$ and $O(n_X)$, respectively. The other two quantities in \eqref{e:svrpda1:batch_quantities} need only $O(n_Y)$ due to their independence of $i$. Second, we consider the further special case of \eqref{e:gen_prob_def_wang_et_all} with $n_X=1$, which simplifies the objective function \eqref{e:general_prob_def} so that there is no empirical average outside $\phi_i(\cdot)$. This takes the form of the unsupervised learning objective function that appears in \cite{liucheden17uns}. Note that our results $O((n_Y\!+\!\kappa)\log\frac{1}{\epsilon})$ enjoys a linear convergence rate (i.e., log-dependency on $\epsilon$) due to the variance reduction technique. In contrast, stochastic primal-dual gradient (SPDG) method in \cite{liucheden17uns}, which does not use variance reduction, can only have sublinear convergence rate (i.e., $O(\frac{1}{\epsilon})$).

\paragraph{Relation to SPDC \cite{zhaxia17sto}}
Lastly, we consider the case where $n_{Y_i}=1$ for all $1 \leq i \leq n_X$ and $f_{\theta}$ is a linear function in $\theta$. This simplifies \eqref{e:general_prob_def} to the problem considered in \cite{zhaxia17sto}, known as the \emph{regularized empirical risk minimization of linear predictors}. It has applications in support vector machines, regularized logistic regression, and more, depending on how the merit function $\phi_i$ is defined. In this special case, the overall complexity for \methodexact~becomes (see Appendix~\ref{appendix:proof_svrpda1_splcase}): 
\begin{align}
O\big(
( {n_X + \kappa} ) \ln ({1}/{\epsilon})
\big) \,,
\label{e:thm:svrpda1_spl_case:total_complexity_main}
\end{align}
where the condition number $\kappa = B_f^2 / \mu \gamma$. In comparison, the authors in \cite{zhaxia17sto} propose a \emph{stochastic primal dual coordinate} (SPDC) algorithm for this special case and prove an overall complexity of 
$O\big(
\big( n_X + {\sqrt{n_X \kappa}} \big) \ln \big(\frac{1}{\epsilon}\big)
\big)$ to achieve an $\epsilon$-error solution. It is interesting to note that the complexity result in \eqref{e:thm:svrpda1_spl_case:total_complexity_main} and the complexity result in \cite{zhaxia17sto} only differ in their dependency on $\kappa$. This difference is most likely due to the acceleration technique that is employed in the primal update of the SPDC algorithm. We conjecture that the dependency on the condition number of SVRPDA-I can be further improved using a similar acceleration technique.

\begin{table}[t]
    \centering
    \caption{The storage complexity of \methodexact~and \methodapprox.}
    \label{tab:storage_complexity}
    {\small
    \setlength\tabcolsep{5pt}
    \begin{tabular}{c|cccccccc|c}
        \toprule
         Methods & $U_0$ & $\{\overline{f}_i\}$ & $\{\overline{f}_i'\}$ & $\theta^{(k)}$ & $\tilde{\theta}$ & $\{w_i^{(k)}\}$ & $\delta_k^\theta$ & $\delta_k^w$ & Total\\
         \midrule
         \methodexact & $O(d)$ & $O(n_X \ell)$ & $O(n_X d \ell)$ & $O(d)$ & $O(d)$ & $O(n_X \ell)$ & $O(d)$ & $O(\ell)$ & $O(n_X d \ell)$ \\
         \methodapprox & $O(d)$ & $O(n_X \ell)$ & \cancel{\phantom{None}} & $O(d)$ & $O(d)$ & $O(n_X \ell)$ & $O(d)$ & $O(\ell)$ & $O(d \!+\! n_X \ell)$ \\
         \bottomrule
    \end{tabular}
    }
\end{table}

\subsection{Storage complexity}
\label{s:theo:storage}
We now briefly discuss and compare the storage complexities of both \methodexact~and \methodapprox. In Table \ref{tab:storage_complexity}, we report the itemized and total storage complexities for both algorithms, which shows that \methodapprox~significantly reduces the memory footprint. We also observe that the batch quantities in \eqref{e:svrpda1:batch_quantities}, especially $\overline{f}_i'(\tilde{\theta})$, dominates the storage complexity in \methodexact.
On the other hand, the memory usage in \methodapprox~is more uniformly distributed over different quantities. Furthermore, although the total complexity of \methodapprox, $O(d+n_X\ell)$, grows with the number of samples $n_X$, the $n_X \ell $ term is relatively small because the dimension $\ell$ is small in many practical problems (e.g., $\ell=1$ in \eqref{e:unsup_obj} and \eqref{e:mean_var_to_prob_def}). This is similar to the storage requirement in SPDC \cite{zhaxia17sto} and SAGA \cite{defazio2014saga}.

\section{Experiments}
\label{sec:experiments}


\begin{figure*}[t]
\centering
\hbox{\hspace{-2em}\includegraphics[trim={0.18cm 3.4cm 0 3cm}, width=1.12\textwidth]{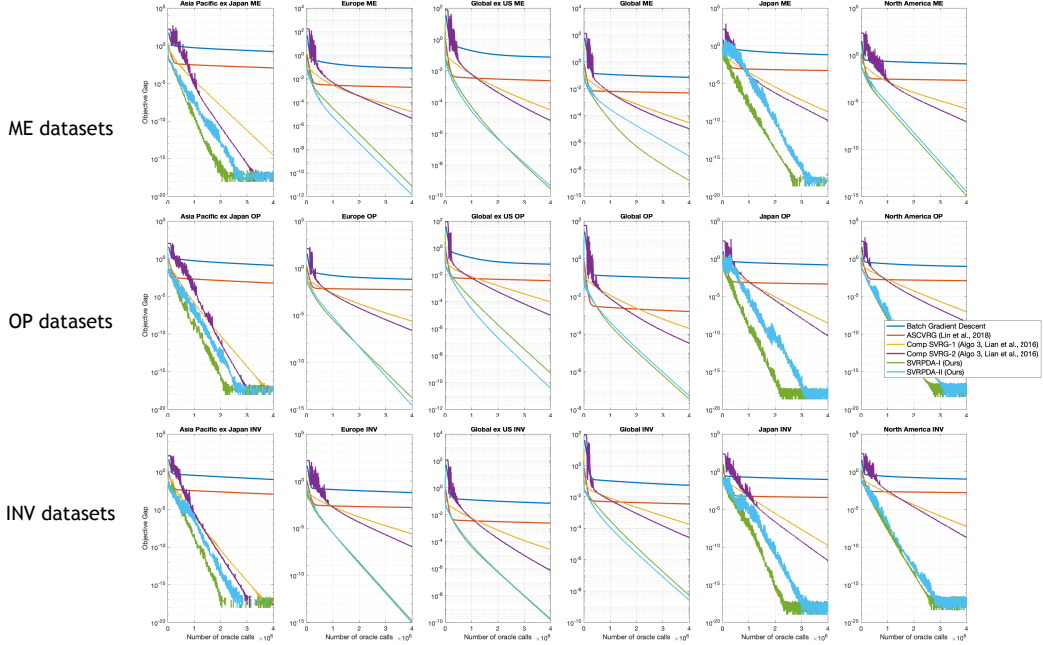}
\vspace{1.5em}
}
\caption{Performance of different algorithms on the risk-averse learning for portfolio management optimization problem. The performance is measured in terms of the number of oracle calls required to achieve a certain objective gap.}
\label{fig:finance:oraclecalls}
\end{figure*}

In this section we consider the problem of risk-averse learning for portfolio management optimization \cite{liawanliu2016fin, linfanwanjor18imp}, introduced in \Section{sec:motivation}.\footnote{Additional experiments on the application to policy evaluation in MDPs can be found in Appendix~\ref{appendix:mdp}.} Specifically, we want to solve the optimization problem \eqref{e:mean_var_to_prob_def} for a given set of reward vectors $\{x_i \in \Re^d: 0 \leq i \leq n-1\}$. As we discussed in Section \ref{sec:motivation}, we adopt the alternative formulation \eqref{e:finance_identification2} for the second term so that it becomes a special case of our general problem \eqref{e:general_prob_def}. Then, we rewrite the cost function into a min-max problem by following the argument in Section \ref{sec:algorithm:minmax} and apply our \methodexact~and \methodapprox~algorithms (see Appendix~\ref{appendix:experiment_details:finance} for the details).

We evaluate our algorithms on 18 real-world US Research Returns datasets obtained from the Center for Research in Security Prices (CRSP) website\footnote{The processed data in the form of .mat file was obtained from \url{https://github.com/tyDLin/SCVRG}}, with the same setup as in \cite{linfanwanjor18imp}. In each of these datasets, we have $d = 25$ and $n = 7240$. We compare the performance of our proposed \methodexact~and \methodapprox~algorithms\footnote{Code: \url{https://github.com/adidevraj/SVRPDA}. The hyper-parameters can be found in Appendix \ref{appendix:experiment_details:hyperparameters}.} with the following state-of-the art algorithms designed to solve composition optimization problems: (i) Compositional-SVRG-1 (Algorithm~$2$ of \cite{liawanliu2016fin}), (ii) Compositional-SVRG-2 (Algorithm~$3$ of \cite{liawanliu2016fin}), (iii) Full batch gradient descent, and (iv) ASCVRG algorithm \cite{linfanwanjor18imp}. For the compositional-SVRG algorithms, we follow \cite{liawanliu2016fin} to formulate it as a special case of the form \eqref{e:gen_prob_def_wang_et_all} by using the identification \eqref{e:finance_identification1}. Note that we cannot use the identification \eqref{e:finance_identification2} for the compositional SVRG algorithms because it will lead to the more general formulation \eqref{e:general_prob_def} with $f_{\theta}$ depending on both $x_i$ and $y_{ij}\equiv x_j$. For further details, the reader is referred to \cite{liawanliu2016fin}.

As in previous works, we compare different algorithms based on the number of \emph{oracle calls} required to achieve a certain objective gap (the difference between the objective function evaluated at the current iterate and at the optimal parameters). One \emph{oracle call} is defined as accessing the function $f_\theta$, its derivative $f'_\theta$, or $\phi_i(u)$ for any $0 \leq i < n$ and $u \in \Re^\ell$. The results are shown in Figure \ref{fig:finance:oraclecalls}, which shows that our proposed algorithms significantly outperform the baseline methods on all datasets. In addition, we also observe that \methodapprox~also converges at a linear rate, and the performance loss caused by the approximation is relatively small compared to \methodexact.




\section{Related Works}
\label{sec:relatedworks}

Composition optimization have attracted significant attention in optimization literature. The stochastic version of the problem \eqref{e:gen_prob_def_wang_et_all}, where the empirical averages are replaced by expectations, is studied in \cite{wanfanliu17sto}. The authors propose a two-timescale stochastic approximation algorithm known as SCGD, and establish \emph{sublinear} convergence rates. In \cite{wanliufan16acc}, the authors propose the ASC-PG algorithm by using a proximal gradient method to deal with nonsmooth regularizations. The works that are more closely related to our setting are \cite{liawanliu2016fin} and \cite{linfanwanjor18imp}, which consider a finite-sum minimization problem \eqref{e:gen_prob_def_wang_et_all} (a special case of our general formulation \eqref{e:general_prob_def}). In \cite{liawanliu2016fin}, the authors propose the compositional-SVRG methods, which combine SCGD with the SVRG technique from \cite{johzha13acc} and obtain \emph{linear} convergence rates. In \cite{linfanwanjor18imp}, the authors propose the ASCVRG algorithms that extends to convex but non-smooth objectives. Recently, the authors in \cite{zhaxia19com} propose a C-SAGA algorithm to solve the special case of \eqref{e:gen_prob_def_wang_et_all} with $n_X=1$, and extend to general $n_X$. Different from these works, we take an efficient primal-dual approach that fully exploits the dual decomposition and the finite-sum structures.


On the other hand, problems similar to \eqref{e:general_prob_def} (and its stochastic versions) are also examined in different specific machine learning problems. \cite{sutton2009fast} considers the minimization of the mean square projected Bellman error (MSPBE) for policy evaluation, which has an expectation inside a \emph{quadratic loss}. The authors propose a two-timescale stochastic approximation algorithm, GTD2, and establish its asymptotic convergence. \cite{liu2015finite} and \cite{macua2015distributed} independently showed that the GTD2 is a stochastic gradient method for solving an equivalent saddle-point problem. In \cite{dai2017learning} and \cite{dai2018sbe}, the authors derived saddle-point formulations for two other variants of costs (MSBE and MSCBE) in the policy evaluation and the control settings, and develop their stochastic primal-dual algorithms. 
All these works consider the stochastic version of the composition optimization and the proposed algorithms have sublinear convergence rates. In \cite{ducheli}, different variance reduction methods are developed to solve the finite-sum version of MSPBE and achieve linear rate even without strongly convex regularization.  Then the authors in \cite{duhu19lin} extends this linear convergence results to the general convex-concave problem with linear coupling and without strong convexity. Besides, problem of the form \eqref{e:general_prob_def} was also studied in the context of unsupervised learning \cite{liucheden17uns,yehcheyuyu18uns} in the stochastic setting (with expectations in \eqref{e:general_prob_def}).

Finally, our work is inspired by the stochastic variance reduction techniques in optimization \cite{le2012stochastic,johzha13acc,defazio2014saga,balamurugan2016stochastic,zhaxia17sto}, which considers the minimization of a cost that is a finite-sum of many component functions. Different versions of variance reduced stochastic gradients are constructed in these works to achieve linear convergence rate. In particular, our variance reduced stochastic estimators are constructed based on the idea of SVRG \cite{johzha13acc} with a novel design of the control variates. Our work is also related to the SPDC algorithm \cite{zhaxia17sto}, which also integrates dual coordinate ascent with variance reduced primal gradient. However, our work is different from SPDC in the following aspects. First, we consider a more general composition optimization problem \eqref{e:general_prob_def} while SPDC focuses on regularized empirical risk minimization with linear predictors, i.e., $n_{Y_i} \equiv 1$ and $f_{\theta}$ is linear in $\theta$. Second, because of the composition structures in the problem, our algorithms also needs SVRG in the dual coordinate ascent update, while SPDC does not. Third, the primal update in SPDC is specifically designed for linear predictors. In contrast, our work is not restricted to that by using a novel variance reduced gradient.

\section{Conclusions and Future Work}
\label{sec:conclusion}
We developed a stochastic primal-dual algorithms, \methodexact~to \emph{efficiently} solve the empirical composition optimization problem. This is achieved by fully exploiting the rich structures inherent in the reformulated min-max problem, including the dual decomposition and the finite-sum structures. 
It alternates between (i) a dual step of stochastic variance reduced coordinate ascent and (ii) a primal step of stochastic variance reduced gradient descent. 
In particular, we proposed a novel variance reduced gradient for the primal update, which achieves better variance reduction with low complexity. We derive a \emph{non-asymptotic} bound for the error sequence and show that it converges at a \emph{linear} rate when the problem is strongly convex. Moreover, we also developed an approximate version of the algorithm named \methodapprox, which further reduces the storage complexity. Experimental results on several real-world benchmarks showed that both \methodexact~and \methodapprox~significantly outperform existing techniques on all these tasks, and the approximation in \methodapprox~only caused a slight performance loss. Future extensions of our work include the theoretical analysis of \methodapprox, the generalization of our algorithms to Bregman divergences, and applying it to large-scale machine learning problems with non-convex cost functions (e.g., unsupervised sequence classifications).

\bibliographystyle{abbrv}
\bibliography{spdgm}

\begin{thebibliography}{10}

\bibitem{balamurugan2016stochastic}
P.~Balamurugan and F.~Bach.
\newblock Stochastic variance reduction methods for saddle-point problems.
\newblock In {\em Advances in Neural Information Processing Systems}, pages
  1416--1424, 2016.

\bibitem{dai2017learning}
B.~Dai, N.~He, Y.~Pan, B.~Boots, and L.~Song.
\newblock Learning from conditional distributions via dual embeddings.
\newblock In {\em Artificial Intelligence and Statistics}, pages 1458--1467,
  2017.

\bibitem{dai2018sbe}
B.~Dai, A.~Shaw, L.~Li, L.~Xiao, N.~He, Z.~Liu, J.~Chen, and L.~Song.
\newblock {SBEED}: Convergent reinforcement learning with nonlinear function
  approximation.
\newblock In {\em Proc. International Conference on Machine Learning}, pages
  1133--1142, 2018.

\bibitem{defazio2014saga}
A.~Defazio, F.~Bach, and S.~Lacoste-Julien.
\newblock {SAGA}: A fast incremental gradient method with support for
  non-strongly convex composite objectives.
\newblock In {\em Advances in neural information processing systems}, pages
  1646--1654, 2014.

\bibitem{ducheli}
S.~S. Du, J.~Chen, L.~Li, L.~Xiao, and D.~Zhou.
\newblock Stochastic variance reduction methods for policy evaluation.
\newblock In {\em Proc. International Conference on Machine Learning}, pages
  1049--1058, 2017.

\bibitem{duhu19lin}
S.~S. Du and W.~Hu.
\newblock Linear convergence of the primal-dual gradient method for
  convex-concave saddle point problems without strong convexity.
\newblock In {\em Proc. International Conference on Artificial Intelligence and
  Statistics}, pages 196--205, 2019.

\bibitem{johzha13acc}
R.~Johnson and T.~Zhang.
\newblock Accelerating stochastic gradient descent using predictive variance
  reduction.
\newblock In {\em Advances in neural information processing systems}, pages
  315--323, 2013.

\bibitem{le2012stochastic}
N.~Le~Roux, M.~W. Schmidt, F.~R. Bach, et~al.
\newblock A stochastic gradient method with an exponential convergence rate for
  finite training sets.
\newblock In {\em Advances in Neural Information Processing Systems}, pages
  2672--2680, 2012.

\bibitem{liawanliu2016fin}
X.~Lian, M.~Wang, and J.~Liu.
\newblock Finite-sum composition optimization via variance reduced gradient
  descent.
\newblock In {\em Proc. International Conference on Artificial Intelligence and
  Statistics}, pages 1159--1167, 2017.

\bibitem{linfanwanjor18imp}
T.~Lin, C.~Fan, M.~Wang, and M.~I. Jordan.
\newblock Improved oracle complexity for stochastic compositional variance
  reduced gradient.
\newblock {\em arXiv preprint arXiv:1806.00458}, 2018.

\bibitem{liu2015finite}
B.~Liu, J.~Liu, M.~Ghavamzadeh, S.~Mahadevan, and M.~Petrik.
\newblock Finite-sample analysis of proximal gradient td algorithms.
\newblock In {\em Proc. Conference on Uncertainty in Artificial Intelligence},
  pages 504--513, 2015.

\bibitem{liucheden17uns}
Y.~Liu, J.~Chen, and L.~Deng.
\newblock Unsupervised sequence classification using sequential output
  statistics.
\newblock In {\em Advances in Neural Information Processing Systems}, pages
  3550--3559, 2017.

\bibitem{macua2015distributed}
S.~V. Macua, J.~Chen, S.~Zazo, and A.~H. Sayed.
\newblock Distributed policy evaluation under multiple behavior strategies.
\newblock {\em IEEE Transactions on Automatic Control}, 60(5):1260--1274, 2015.

\bibitem{rockafellar2015convex}
R.~T. Rockafellar.
\newblock {\em Convex analysis}.
\newblock Princeton university press, 2015.

\bibitem{russha06opt}
A.~Ruszczy{\'n}ski and A.~Shapiro.
\newblock Optimization of risk measures.
\newblock In {\em Probabilistic and randomized methods for design under
  uncertainty}, pages 119--157. Springer, 2006.

\bibitem{sutton2009fast}
R.~S. Sutton, H.~R. Maei, D.~Precup, S.~Bhatnagar, D.~Silver,
  C.~Szepesv{\'a}ri, and E.~Wiewiora.
\newblock Fast gradient-descent methods for temporal-difference learning with
  linear function approximation.
\newblock In {\em Proc. International Conference on Machine Learning}, pages
  993--1000, 2009.

\bibitem{vapnik1998statistical}
V.~Vapnik.
\newblock {\em Statistical learning theory. 1998}, volume~3.
\newblock Wiley, New York, 1998.

\bibitem{wanfanliu17sto}
M.~Wang, E.~X. Fang, and H.~Liu.
\newblock Stochastic compositional gradient descent: algorithms for minimizing
  compositions of expected-value functions.
\newblock {\em Mathematical Programming}, 161(1-2):419--449, 2017.

\bibitem{wanliufan16acc}
M.~Wang, J.~Liu, and E.~Fang.
\newblock Accelerating stochastic composition optimization.
\newblock In {\em Advances in Neural Information Processing Systems}, pages
  1714--1722, 2016.

\bibitem{xiao2014proximal}
L.~Xiao and T.~Zhang.
\newblock A proximal stochastic gradient method with progressive variance
  reduction.
\newblock {\em SIAM Journal on Optimization}, 24(4):2057--2075, 2014.

\bibitem{xie2018block}
T.~Xie, B.~Liu, Y.~Xu, M.~Ghavamzadeh, Y.~Chow, D.~Lyu, and D.~Yoon.
\newblock A block coordinate ascent algorithm for mean-variance optimization.
\newblock In {\em Advances in Neural Information Processing Systems}, pages
  1065--1075, 2018.

\bibitem{yehcheyuyu18uns}
C.-K. Yeh, J.~Chen, C.~Yu, and D.~Yu.
\newblock Unsupervised speech recognition via segmental empirical output
  distribution matching.
\newblock In {\em Proc. International Conference on Learning Representations},
  2019.

\bibitem{zhaxia19com}
J.~Zhang and L.~Xiao.
\newblock A composite randomized incremental gradient method.
\newblock In {\em International Conference on Machine Learning}, pages
  7454--7462, 2019.

\bibitem{zhaxia17sto}
Y.~Zhang and L.~Xiao.
\newblock Stochastic primal-dual coordinate method for regularized empirical
  risk minimization.
\newblock {\em Journal of Machine Learning Research}, 18(1):2939--2980, 2017.

\end{thebibliography}






\appendix
\onecolumn
\begin{center}
{\bf \huge Appendix}
\end{center}

\section{Solving \eqref{e:general_prob_def} in the main paper directly by SGD is biased}
\label{appendix:SGD_bias}

Applying the standard chain rule, we obtain the gradient of the cost function in \eqref{e:general_prob_def} as
\begin{equation}
\frac{1}{n_X} \sum_{i = 0}^{n_X-1} \phi'_i \big( \overline{f}_{i}(\theta) \big) \overline{f}_{i}'(\theta)
\label{e:batch_grad_obj}
\end{equation}
where:
\begin{equation}
\begin{aligned}
\overline{f}_{i}(\theta) & \defeq \frac{1}{n_{Y_i}} \sum_{j = 0}^{n_{Y_i}-1}  f_\theta( x_i, y_{ij})
\\
\overline{f}_{i}'(\theta) & \defeq \frac{1}{n_{Y_i}} \sum_{j = 0}^{n_{Y_i}-1}  f'_\theta( x_i, y_{ij})
\label{e:barf_barf_dash}
\end{aligned}
\end{equation}
and $f'_{\theta} \big(x, y \big)$ denotes the $d \times \ell$ matrix, with its $(i,j)^{\text{th}}$ element defined to be:
\begin{equation}
\big[f'_{\theta} \big(x, y \big) \big]_{i,j} = \frac{\partial}{\partial \theta_i} \big[(f_{\theta} \big(x, y \big) \big]_j
\label{e:f_dash_def}
\end{equation}
Note from \eqref{e:batch_grad_obj} that there are empirical averages inside and outside $\phi'(\cdot)$. Therefore, if we sample these empirical averages simultaneously, the stochastic gradient estimator would be biased. In other words, a direct application of stochastic gradient descent to \eqref{e:general_prob_def} would be intrinsically biased.


\begin{algorithm*}[t]
\caption{SVRPDA-II}\label{alg:svrpda2}
\begin{algorithmic}[1]
\STATE {\bf Inputs:} data $\{(x_i,y_{ij}): 0 \!\le\! i \!<\! n_X, 0 \!\le\! j \!<\! n_{Y_i}\}$; step-sizes $\alpha_\theta$ and $\alpha_w$; \# inner iterations $M$.
\STATE {\bf Initialization:} 
$\tilde{\theta}_{0} \in \Re^d$ and $\tilde{w}_{0} \in \Re^{\ell n_X}$.
\FOR{$s=1,2,\ldots$}
\STATE
Set $\tilde{\theta}=\tilde{\theta}_{s-1}$, $\theta^{(0)}=\tilde{\theta}$, $w^{(0)} = \tilde{w}_{s-1}$, and compute the batch quantities (for each $0 \!\le\! i \!<\! n_X$):
    \begin{align}
        U_0
                    &=
                            \!
                            \sum_{i=0}^{n_X\!-\!1}
                            \!
                            \sum_{j=0}^{n_{Y_i}\!-\!1}
                            \!\!
                            \frac{f_{\tilde{\theta}}'(x_i,y_{ij})w_i^{(0)}}{n_X n_{Y_i}}
                            ,
                            \;\;
        \overline{f}_i(\tilde{\theta})
                    \defeq
                            \!
                            \sum_{j=0}^{n_{Y_i}\!-\!1}
                            \!\!
                            \frac{f_{\tilde{\theta}}(x_i,y_{ij})}{n_{Y_i}}.
        \label{e:svrpda2:batch_quantities}
    \end{align}
\FOR{$k=1$ {\bfseries to} $M$}
\STATE 
Randomly sample $i_k \in \{0,\ldots, n_X\!-\!1\}$ and then $j_k \in \{0, \ldots, n_{Y_{i_k}}\!-\!1\}$ at uniform.
\STATE 
Compute the stochastic variance reduced gradient for dual update:
    \begin{align}
        \delta_k^{w}
                    &=
                            f_{\theta^{(k-1)}}(x_{i_k}, y_{i_k j_k})
                            -
                            f_{\tilde{\theta}}(x_{i_k}, y_{i_k j_k})
                            +
                            \overline{f}_{i_k}(\tilde{\theta}).
        \label{e:svrpda2:delta_w_k}
    \end{align}
\STATE Update the dual variables:
\begin{align}
    w^{(k)}_{i} 
                    &= 
                            \begin{cases}
                                \displaystyle 
                                \argmin_{w_i} 
                                \Big [ 
                                    - 
                                    \langle  
                                        \delta_k^{w}, 
                                        \, 
                                        w_i
                                    \rangle 
                                    + 
                                    \phi_i^*(w_i)
                                    +  
                                    \frac{1}{2\alpha_w} \|w_i - w^{(k-1)}_{i} \|^2
                                \Big ]
                                & \text{if } i = i_k
                                \\
                                w^{(k-1)}_{i}
                                & \text{if } i \neq i_k
                            \end{cases}.
        \label{e:svrpda2:w_update}
\end{align}
\STATE 
Update $U_k$ according to the following recursion:
\begin{align}
    U_k
            = 
                    U_{k-1}
                    + 
                    \frac{1}{n_X}
                    f'_{\tilde{\theta}}(x_{i_k}, y_{i_kj_k''})
                    \big(
                        w^{(k)}_{i_k} - w^{(k-1)}_{i_k}   
                    \big).
    \label{e:svrpda2:Ukupdate}
\end{align}
\STATE 
Randomly sample $i_k' \in \{0,\ldots,n_X-1\}$ and then $j_k' \in \{0, \ldots, n_{Y_{i_k'}}-1\}$, independent of $i_k$ and $j_k$, and compute the stochastic variance reduced gradient for primal update:
    \begin{align}
        \delta_k^{\theta}
                &=
                        f_{\theta^{(k-1)}}'(x_{i_k'},y_{i_k' j_k'})w_{i_k'}^{(k)}
                        -
                        f_{\tilde{\theta}}'(x_{i_k'},y_{i_k' j_k'}) w_{i_k'}^{(k)}
                        +
                       U_k.
        \label{e:svrpda2:delta_theta_k}
    \end{align}
\STATE 
Update the primal variable:
    \begin{equation}
        \theta^{(k)} 
                = 
                        \displaystyle 
                        \argmin_{\theta} 
                        \Big[ 
                            \langle 
                                \delta_k^{\theta}, 
                                \, 
                                \theta
                            \rangle 
                            + 
                            g(\theta)
                            +
                            \frac{1}{2\alpha_\theta} 
                            \|\theta - \theta^{(k-1)}\|^2 
                        \Big].
        \label{e:svrpda2:theta_update}
    \end{equation}
\ENDFOR
\STATE 
{\bf{Option I:}} Set $\tilde{w}_{s} = w^{(k)}$ and $\tilde{\theta}_s = \theta^{(k)}$.
\STATE 
{\bf{Option II:}} Set $\tilde{w}_s = w^{(k)}$ and $\tilde{\theta}_s = \theta^{(t)}$ for randomly sampled $t \in \{0,\ldots,M\!-\!1\}$.
\ENDFOR
\STATE {\bf Output:} $\tilde{\theta}_s$ at the last outer-loop iteration.
\end{algorithmic}
\end{algorithm*}

\section{\methodapprox~algorithm}
\label{appendix:svrpda2}

Algorithm \ref{alg:svrpda2} in this supplementary material summarizes the full details of the \methodapprox~algorithm, which was developed in Section \ref{sec:algorithm:vr} of the main paper. Note that it no longer requires the computation or the storage of $\overline{f}_i(\tilde{\theta})$ in \eqref{e:svrpda2:batch_quantities}. Also note that the $\overline{f}_{i_k}(\tilde{\theta})$ in \eqref{e:svrpda2:Ukupdate} is replaced with $f_{\tilde{\theta}}'(x_{i_k}, y_{i_kj_k''})$ now.

\section{Experiment details}
\label{appendix:experiment_details}

\subsection{Implementation details in risk-averse learning}
\label{appendix:experiment_details:finance}

As we discussed in Section \ref{sec:motivation}, we adopt the alternative formulation \eqref{e:finance_identification2} for the second term so that it becomes a special case of our general problem \eqref{e:general_prob_def}. Then, using the argument in Section \ref{sec:algorithm:minmax}, the second term in \eqref{e:mean_var_to_prob_def} can be rewritten into the objective in \eqref{e:gen_prob_min_max}. Combining it with the first term in \eqref{e:mean_var_to_prob_def}, the original problem \eqref{e:mean_var_to_prob_def} can be reformulated into the following equivalent min-max form:
    \begin{align}
        \min_{\theta\in \Re^d} \! \max_{w} \!
        \frac{1}{n} \! \sum_{i = 0}^{n-1} \!\!
        \bigg(\!
        \Big\langle \!
            \frac{1}{n} \! \sum_{j = 0}^{n-1} \langle x_i \!-\! x_j, \theta \rangle , w_i 
            \!
        \Big\rangle 
        \!-\! \phi^*(w_i)
        \!-\! \langle x_i, \theta \rangle 
        \!\bigg)
        \label{e:exper:mean_var_to_prob_def_pd}
    \end{align}
where $w_i \in \Re$, $\phi^*(w_i) = w_i^2 / 4$ and $w = \{w_0,\ldots,w_{n-1}\}$. Note that the above min-max problem has an extra $\langle x_i, \theta\rangle$ term within the sum. Since it is in a standard empirical average form, we can deal with it in a straightforward manner. Notice that \eqref{e:exper:mean_var_to_prob_def_pd} is exactly of the form \eqref{e:general_prob_def} in the main paper except the last term $\langle x_i, \theta \rangle$ within the summation, which as we will show next, can be dealt with in a straightforward manner.

Taking out the $\langle x_i, \theta \rangle$ term in \eqref{e:exper:mean_var_to_prob_def_pd}, based on the discussion in \Section{s:algo} of the main paper, the batch gradients used in the algorithm are as follows. Batch gradient of \eqref{e:exper:mean_var_to_prob_def_pd} with respect to $w_i$, for each $0 \leq i \leq n-1$ can be written as: 
\begin{equation}
\begin{aligned}
\hspace{-0.05in} 
\overline{f}_{i}(\theta) & = \frac{1}{n} \sum_{j = 0}^{n-1}  f_\theta( x_i, x_{j}) = \frac{1}{n} \sum_{j = 0}^{n-1} \langle x_i - x_{j}, \theta \rangle
\end{aligned}
\label{e:barf_pfm}
\end{equation}
Batch gradient of \eqref{e:exper:mean_var_to_prob_def_pd} with respect to $\theta$ (without the $\langle x_i, \theta \rangle$ term) is given by: 
\begin{equation}
\begin{aligned}
L'_\theta(\theta, w)
& = \frac{1}{n^2} \sum_{i = 0}^{n-1} \sum_{j = 0}^{n-1}  f'_{\theta} \big(x_i, x_{j} \big) w_i 
\\
& = \frac{1}{n^2} \sum_{i = 0}^{n-1} \sum_{j = 0}^{n-1}   \big(x_i - x_{j} \big) w_i
\label{e:bg_g_theta_pfm}
\end{aligned}
\end{equation}
For each $0 \leq i \leq n-1$, gradient of $\overline{f}_{\theta,i}(x_i)$ is given by: 
\begin{equation}
\begin{aligned}
\hspace{-0.05in} 
\overline{f}_{i}'(\theta) & \defeq \frac{1}{n} \sum_{j = 0}^{n-1}  f'_\theta( x_i, x_{j}) = x_i - \frac{1}{n} \sum_{j = 0}^{n-1} x_{j}
\label{e:barf_dash_pfm}
\end{aligned}
\end{equation}
Based on the above derivation and the expression \eqref{e:algorithm:svrg_general} in the main paper, the stochastic variance reduced gradient for the dual update in both \methodexact~and \methodapprox~is given by
\begin{equation}
\begin{aligned}
\delta_k^{w}  = \langle x_i - x_{j}, \theta \rangle + \overline{f}_{\tilde{\theta}} (x_i) 
\label{e:vr_dual_pfm}
\end{aligned}
\end{equation}
and the stochastic variance reduced gradient for the primal update is given by
\begin{equation}
\begin{aligned}
\delta_k^{\theta}  & \defeq \big (x_{i} - x_{j} \big) w_{i}
 - (x_{i} - x_{j} \big) w_{i} + \barL'_\theta(\tilde{\theta}, w)
= \barL'_\theta(\tilde{\theta}, w)
\label{e:vr_primal_algo_2_pfm}
\end{aligned}
\end{equation}
Note that, since the function $f_\theta$ is linear in $\theta$, the variance reduced gradient for the primal variable is in-fact the full batch gradient. 

Next, due to the additional $\langle x_i, \theta \rangle$ term in \eqref{e:exper:mean_var_to_prob_def_pd} (which was ommitted in the above definitions), there is an additional term that needs to be added to the variance reduced gradient in \eqref{e:vr_primal_algo_2_pfm}. Denoting $g_\theta(x_i) = \langle x_i \,, \theta \rangle$ and $g'_\theta(x_i) = x_i$ the correction batch term is given by:
\begin{equation}
\begin{aligned}
\overline{g}_\theta' & = \frac{1}{n} \sum_{i=0}^{n-1} g'_\theta(x_i)
= \frac{1}{n} \sum_{i=0}^{n-1} x_i
\end{aligned}
\label{e:batch_corr}
\end{equation}
which is independent of $\theta$. In summary, the final variance reduced stochastic gradient for the primal update in both \methodexact~and \methodapprox~is given by:
\begin{equation}
\begin{aligned}
\delta_k^{\theta}  & = \barL'_\theta(\tilde{\theta}, w) - \frac{1}{n} \sum_{i=0}^{n-1} x_i
\label{e:vr_primal_pfm_corrected}
\end{aligned}
\end{equation}

\subsection{Hyper-parameter choices for algorithms}
\label{appendix:experiment_details:hyperparameters}

In this subsection, we provide the hyper-parameters that are used in our experiments on risk-averse learning (Section \ref{sec:experiments}). We first list the hyper-parameters of our methods below:
\begin{itemize}
    \item 
    SVRPDA-I: $M = n$, $\alpha_\theta=0.0003$, $\alpha_w = 100$.
    \item
    SVRPDA-II: $M = n$, $\alpha_\theta=0.0003$, $\alpha_w = 100$.
\end{itemize}
Then, we provide the hyper-parameters used in the baseline methods:
\begin{itemize}
    \item
    Compositional-SVRG-1 (Algorithm~2 of \cite{liawanliu2016fin}): $K=n$, $A=6$, $\gamma = 0.0003$;
    \item
    Compositional-SVRG-2 (Algorithm~3 of \cite{liawanliu2016fin}): $K=n$, $A=3$, $B = 3$, $\gamma = 0.0004$;
    \item
    ASCVRG: The results are obtained by using their publicly released code on github: \url{https://github.com/tyDLin/SCVRG} with the same setting and choice of hyper-parameters.
    \item
    Batch gradient descent: step size $\alpha = 0.01$;
\end{itemize}
Note that, for the Compotional-SVRGs and batch gradient algorithms, the above choice of the hyper-parameters are obtained by sweeping through a set of hyper-parameters and choosing the ones with the best performance. For ASCVRG, we use the the publicly released code by the authors. 

\section{Additional experiments on MDP policy evaluation}
\label{appendix:mdp}

Consider a Markov decision process (MDP) problem with state space $\state$ and action space $\A$. We assume that both $\state$ and $\A$ are finite, and define $\state = \{ 1 , \ldots, S \}$. For any $1 \leq i, j \leq S$, we denote by $r_{i, j}$ the reward associated with transition from state $i$ to state $j$. Given a policy $\pi: \state \to \mathcal{P}(\A)$, where $\mathcal{P}(\A)$ denote the probability space over $\A$, we let $P^\pi \in \Re^{S \times S}$ denote the associated state transition probability matrix. The goal in policy evaluation is to estimate the value function $V^\pi: \state \to \Re$ associated with the policy $\pi$, which is a fixed-point solution to the following Bellman equation:
\[
V^{\pi}(i) = \sum_{j = 1}^{S} P_{i,j}^\pi \big (r_{i, j} + \gamma V^{\pi} (j) \big )  \,, \qquad 1 \leq i \leq S,
\]
where $0 < \gamma < 1 $ denotes the discount factor. We consider a linear function approximation to the value function: $V^{\pi}(i) \approx \langle \Psi_i \,, \theta \rangle$, where $\{\Psi_i \in \Re^d: 1 \leq i \leq S\}$ denotes the feature vectors, and $\theta \in \Re^d$ denotes the weight vector to be learned. The problem of finding the optimal weight vector $\theta^*$ that best approximates $V^{\pi}$ can be formulated as the following optimization problem \cite{wanliufan16acc,zhaxia19com}:
\begin{equation}
\begin{aligned}
\theta^* & = \argmin_\theta \bigg\{
F(\theta)  \eqdef \frac{1}{S} \sum_{i = 1}^{S} \Big( \langle \Psi_i \,, \theta \rangle  -  \sum_{j = 1}^{S} P_{i,j}^\pi \big (r_{i, j} + \gamma \langle \Psi_j \,, \theta \rangle  \big )  \Big)^2
\bigg\}.
\label{e:appendix:goal_mdp}
\end{aligned}
\end{equation}

Note that the above problem can be expressed as a special case of \eqref{e:general_prob_def} by using the following identifications: For each $1 \leq i \leq S$, $
n_X = n_{Y_i} = S $, $\phi_i (u) = u^2$, and
\begin{equation}
f_\theta(x_i, y_{ij}) = \langle \Psi_i \,, \theta \rangle  -  S \cdot P_{i,j}^\pi \big (r_{i, j} + \gamma \langle \Psi_j \,, \theta \rangle  \big ) \,, \qquad 1 \leq i , j \leq S.
\label{e:appendix:ftheta_svrpda}
\end{equation}
And it is also possible, although less intuitive, to rewrite \eqref{e:appendix:goal_mdp} as a special case of \eqref{e:gen_prob_def_wang_et_all}. In existing composition optimization literature such as \cite{zhaxia19com,wanliufan16acc}, this is achieved via higher dimensional transformation, with $f_\theta: \Re^d \to \Re^{2 S}$, $\phi_i: \Re^{2 S} \to \Re$, $n_Y = S$, and $n_X = S$. Denote by $Q_\theta^\pi$ the Q-function:
\[
Q_\theta^\pi(i) \eqdef \sum_{j = 1}^{S} P_{i,j}^\pi \big (r_{i, j} + \gamma \langle \Psi_j \,, \theta \rangle  \big ) \,, \qquad 1 \leq i \leq S.
\]
Then, by defining the function $f_\theta$ and $\phi_i$ such that
\begin{equation}
\frac{1}{S} \sum_{j = 1}^{S} f_\theta(y_j) = \Big[ \langle \Psi_1, \theta \rangle \,, Q_\theta^\pi(1)\,, \ldots \,, \langle \Psi_S, \theta \rangle \,, Q_\theta^\pi(S)  \Big]
\label{e:appendix:ftheta_csaga}
\end{equation}
\[
\frac{1}{S} \sum_{i = 1}^{S} \phi_i \Bigg( \Big[ \langle \Psi_1, \theta \rangle \,, Q_\theta^\pi(1)\,, \ldots \,, \langle \Psi_S, \theta \rangle \,, Q_\theta^\pi(S)  \Big]  \Bigg) = \frac{1}{S} \sum_{i = 1}^{S} \Big(  \langle \Psi_i, \theta \rangle - Q_\theta^\pi(i)  \Big)^2,
\]
the problem \eqref{e:appendix:goal_mdp} can be reformulated as \eqref{e:gen_prob_def_wang_et_all}. The reader is referred to \cite{zhaxia19com,wanliufan16acc} for more details.

We evaluate our algorithms on two experimental settings, one with $S = 10$, $d = 5$, and another with $S = 10^4$ and $d = 10$. In each of these two cases, $\gamma$ was set to be $0.9$, and both the transition probability matrix $P^\pi \in \Re^{S \times S}$ and the feature vectors $\{\Psi_i \in \Re^d: 1 \leq i \leq S \}$ were randomly generated. We compare the performance our algorithms \methodexact~and \methodapprox~with the C-SAGA algorithm of \cite{zhaxia19com}, as it is the most recent composition optimization that we are aware of, and is shown to be superior to all existing algorithms on this MDP policy evaluation task \cite{zhaxia19com}. The hyper-parameters for the C-SAGA algorithm were chosen as follows. For $S = 10$ case, we choose all hyper-parameters as in \cite{zhaxia19com}: Mini-batch size $s = 1$ and step-size $\eta = 0.1$. For the $S=10^4$ case, we choose mini-batch size $s = 100$, and step-size $\eta =  0.0005$ (see \cite{zhaxia19com} for details on what these hyper-parameters mean). The hyper-parameters for the \methodexact~and \methodapprox~are chosen as follows. For the MDP with $S = 10$, we choose $M = 150$, $\alpha_\theta = 0.1$, and $\alpha_w = 	0.25$ for \methodexact, and choose $M = 15$, $\alpha_\theta = 0.5$, and $\alpha_w = 	1.25$ for \methodapprox. For the MDP with $S = 10^4$, we choose $M = 13500$, $\alpha_\theta = 0.01$ and $\alpha_w = 16 \times 10^4$ for \methodexact. For \methodapprox, $M = 1350$, $\alpha_\theta = 0.01$ and $\alpha_w = 16 \times 10^4$.

The performance criteria was chosen to be the number of oracle calls required to achieve a certain ``objective gap", defined as $F(\theta) - F(\theta^*)$. Notice that ``one function call", when the function is of the form \eqref{e:appendix:ftheta_svrpda} \emph{is not comparable} to one function call, when the function is defined according to \eqref{e:appendix:ftheta_csaga}. Due to the fact that $f_\theta$ for the C-SAGA algorithm is of dimension $2 S$ (as opposed to $1$ in our formulation), we count $2 S$ oracle calls whenever a function of this form is called for the purpose of fair comparison. The results for MDP with $S=10$ and $S = 10^4$ are reported in Figures~\ref{fig:mdp10:oraclecalls} and~\ref{fig:mdp1e4:oraclecalls}, respectively. We observe that despite having much smaller memory requirement, \methodapprox~has a comparable/better performance than C-SAGA, while \methodexact~is clearly better than C-SAGA.

\begin{figure}[H]
\centering
\hbox{\hspace{5mm}\includegraphics[width=0.8\textwidth]{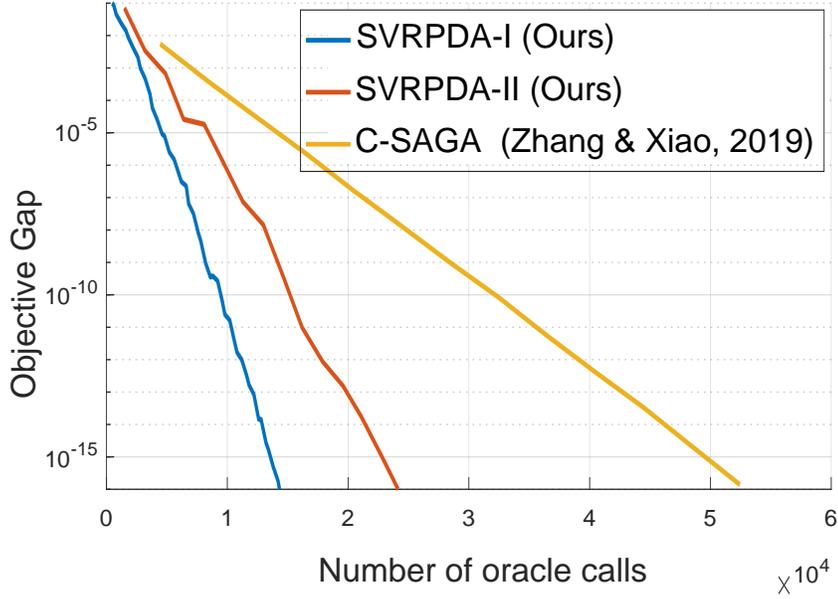}
}
\caption{Performance comparison of our algorithms with the C-SAGA on the MDP with $S = 10$. The performance is measured in terms of the number of oracle calls to achieve a certain objective gap.}
\label{fig:mdp10:oraclecalls}
\end{figure}
\begin{figure}[H]
\centering
\hbox{\hspace{10mm}\includegraphics[width=0.8\textwidth]{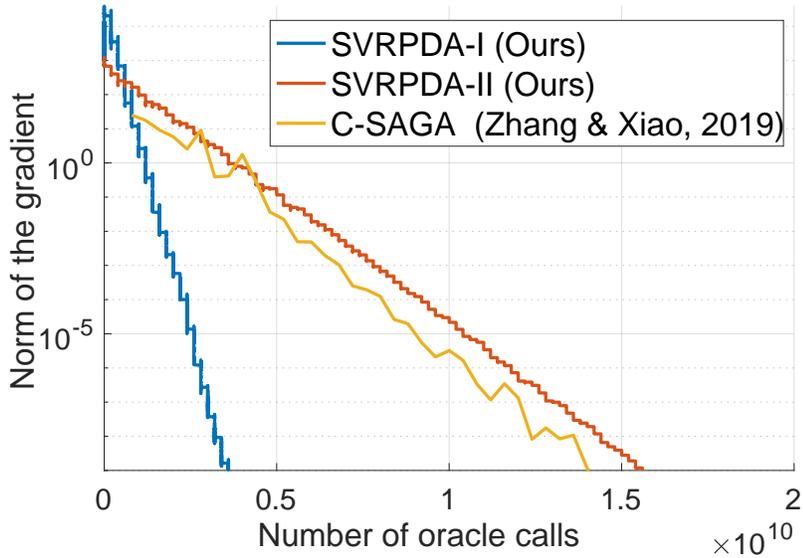}
}
\vspace{1.5em}
\caption{Performance comparison of our algorithms with the C-SAGA on the MDP with $S = 10^4$. The performance is measured in terms of the number of oracle calls to achieve a certain objective gap.}
\label{fig:mdp1e4:oraclecalls}
\end{figure}

\section{Convergence and complexity of SVRPDA-I: Proof}
\label{appendix:proof_svrpda1}

In this section, we derive the (non-asymptotic) bound of the \methodexact~algorithm and its total computation complexity. For convenience, we first repeat the saddle point formulation and the definition of several quantities below:
\begin{align}
\min_{\theta} \max_{w} \frac{1}{n_X} \sum_{i = 0}^{n_X - 1}  \frac{1}{n_{Y_i}} \sum_{j = 0}^{n_{Y_i} - 1}  \Big (  \big \langle  f_\theta( x_i, y_{ij}), \, w_i \big \rangle - \phi_i^* \big(w_i \big) \Big ) + g(\theta)
\label{e:SVRPDA_I:gen_prob_min_max_f_g}
\end{align}
Also, recall the definitions of $\barf_i(\theta)$ and $\barf_i'(\theta)$:
\begin{align}
\overline{f}_{i}(\theta) 
=
\frac{1}{n_{Y_i}} \sum_{j=0}^{n_{Y_i}-1} f_{\theta}(x_i,y_{ij}),
\quad
\overline{f}_{i}'(\theta) 
=        
\frac{1}{n_{Y_i}} \sum_{j=0}^{n_{Y_i}-1} f_{\theta}'(x_i,y_{ij})
\end{align}
Furthermore, we defined $L(\theta,w)$ and its gradient as
\begin{align}
L(\theta, w) 
\eqdef&
\frac{1}{n_X} \sum_{i = 0}^{n_X - 1}  \frac{1}{n_{Y_i}} \sum_{j = 0}^{n_{Y_i} - 1}  \Big (  \big \langle  f_\theta( x_i, y_{ij}), \, w_i \big \rangle - \phi_i^* \big(w_i \big) \Big )
\label{e:SVRPDA_I:b_g}
\\
=&
\frac{1}{n_X} \sum_{i = 0}^{n_X - 1} \Big ( \big \langle  \barf_i(\theta), \, w_i \big \rangle - \phi_i^* \big(w_i \big) \Big )
\label{e:SVRPDA_I:b_g_2}
\\
L_\theta'(\theta, w) 
\eqdef&
\frac{1}{n_X} \sum_{i = 0}^{n_X - 1}   \barf'_i(\theta) w_i 
\label{e:SVRPDA_I:b_g_3}
\end{align}
Using the above notations, the saddle point problem \eqref{e:SVRPDA_I:gen_prob_min_max_f_g} can be rewritten as
\begin{align}
\min_{\theta} \min_{w}
\frac{1}{n_X}
\sum_{i=0}^{n_X-1}
\Big(
\big\langle 
\barf_i(\theta), w_i
\big\rangle
-
\phi_i^{*}(w_i)
\Big)
+ g(\theta)
\label{e:SVRPDA_I:gen_prob_min_max_f_g_interm1}
\end{align}

\subsection{Compact Notation}

Throughout this section, we introduce the following compact notation, to ease exposition of the proof: For any $\theta \in \Re^d$, and $0 \leq i \leq n_X - 1$ and $0 \leq j \leq n_{Y_i} - 1$, we denote:
\begin{equation}
\label{com_notation:eq:fij}
f_{i  j} (\theta) \equiv f_\theta (x_i , y_{ij})  
\end{equation}

Therefore, the stochastic variance reduced gradient for dual update defined in \eqref{e:svrpda1:delta_w_k} of the main paper is rewritten as:
\begin{equation}
\begin{aligned}
\delta_k^w 
&  = f_{i_k j_k} (\theta^{(k-1)} )  -   f_{i_k j_k} (\tiltheta)   +   \barf_{i_k}(\tiltheta)
\end{aligned}
\label{e:SVRPDA_I:delta_def}
\end{equation}
Similarly, the stochastic variance reduced gradient for primal update defined in \eqref{e:svrpda1:delta_theta_k} of the main paper is:
\begin{equation}
\begin{aligned}
\delta_k^\theta
&  =  f'_{i_k' j_k'} \big(\theta^{(k - 1)}\big) w^{(k)}_{i_k'} - f'_{i_k'j_k'} \big( \tiltheta \big) w^{(k)}_{i_k'} + L'_\theta(\tiltheta, w^{(k)})
\end{aligned}
\label{e:SVRPDA_I:delta_theta_def}
\end{equation}
where we used the fact that $U_k \equiv L'_\theta(\tiltheta, w^{(k)})$. We now proceed to recall the Algorithm~\ref{alg:svrpda1} rewritten in a simplified form, using the compact notation.

\subsection{Algorithm}

Before we proceed to prove the convergence of the algorithm, we first recall the update equations of the algorithm. The following updates are at stage $s$ of the outerloop; To simplify exposition, we suppress dependency on $s$, and let $\tiltheta \equiv \tiltheta_s$ throughout.

For the dual update, at each iteration $k$, we first randomly pick an index $0 \leq i_k \leq n_{X} - 1$ at uniform, and then pick another index $0 \leq j_k \leq n_{Y_{i_k}} - 1$ at uniform. For the chosen $(i_k, j_k)$, we first compute the variance reduced stochastic gradient $\delta_k^w$ of $\barf_i(\theta)$ using \eqref{e:SVRPDA_I:delta_def}:
\[
\delta_k^w  =  f_{i_k j_k} (\theta^{(k-1)} )  -   f_{i_k j_k} (\tiltheta)   +   \barf_{i_k}(\tiltheta)
\]
Then, we update the dual variables according to the recursion \eqref{e:svrpda1:w_update}:
\begin{equation}
\begin{aligned}
\hspace{-0.1in}w^{(k)}_{i} &=
\begin{cases}
\displaystyle 
\argmin_{w_i} 
\Big [ 
- \Big \langle \delta_k^w, \, w_i - w_i^{(k - 1)} \Big \rangle 
+ 
\phi_i^*(w_i) 
+ 
\frac{1}{2 \alpha_w} \|w_i - w_i^{(k - 1)} \|^2 
\Big ] 
& \text{if } i = i_k 
\vspace{0.1in}
\\
w^{(k - 1)}_i & \text{if } i \neq i_k
\end{cases}
\label{e:SVRPDA_I:dual_update}
\end{aligned}
\end{equation}
For the primal update, at iteration $k$, we randomly pick another independent set of indices $(i_k',j_k')$ with $0 \leq i_k' \leq n_X - 1$ and $0 \le j_k' \le n_{Y_{i_k}} - 1$, and compute the variance reduced stochastic gradient $\delta_k^\theta$ of $L(\theta,w)$ with respect to $\theta$ using \eqref{e:SVRPDA_I:delta_theta_def}:
\[
\delta_k^\theta= f'_{i_k' j_k'} \big(\theta^{(k - 1)}\big) w^{(k)}_{i_k'} - f'_{i_k'j_k'} \big( \tiltheta \big) w^{(k)}_{i_k'} + L'_\theta(\tiltheta, w^{(k)})
\]
Then, we update the primal variable $\theta$ according to the recursion \eqref{e:svrpda1:theta_update}:
\begin{equation}
\begin{aligned}
\theta^{(k)} 
&= 
\arg\min_{\theta} 
\Big\{ 
\langle \delta_k^\theta, \theta \rangle
+ 
g(\theta)
+ 
\frac{1}{2\alpha_\theta} \| \theta - \theta^{(k-1)} \|^2 
\Big\}
\label{e:SVRPDA_I:primal_update}
\end{aligned}
\end{equation}

\subsection{Assumptions}

We restate the Assumptions in Section~\ref{s:theo} here using the notation in \eqref{com_notation:eq:fij} to make the reading easier:

\begin{assumption}
	\label{a:g_phi:sc_sm}
	The function $g(\theta)$ is $\mu$-strongly convex in $\theta$, and each $\phi_i$ is $1/\gamma$-smooth.
\end{assumption}
\begin{assumption}
	\label{a:phi:lipschitz}
	The merit functions $\phi_i(u)$ are Lipschitz with a uniform constant $B_w$:
	\begin{align}
	| \phi_i(u) - \phi_i(u') |
	&\le
	B_w \| u - u' \|, \quad \forall u, u' \in \Re^\ell, 0 \leq i \leq n_X-1.
	\nn
	\end{align}
\end{assumption}
\begin{assumption}
	\label{a:f:smooth_bg}
	$f_{ij}(\theta)$ is $B_\theta$-smooth in $\theta$, and has bounded gradients with constant $B_f$: For each $0 \leq i \leq n_X-1$ and $0 \leq j \leq n_{Y_i} - 1$,
	\begin{align}
	\| f'_{ij} (\theta_1) - f'_{ij}(\theta_2) \| \leq B_{\theta} \|\theta_1  - \theta_2\|,
	\quad
	\| f'_{ij}(\theta) \| \leq B_f,
	\quad
	\forall \theta, \theta_1, \theta_2 \in \Re^d
	\nn
	\end{align}
\end{assumption}
\begin{assumption}
	\label{a:L:convex}
	For each given $w$ in its domain, the function $L(\theta, w)$ defined in \eqref{e:gen_prob_min_max} is convex in $\theta$:
	\begin{align}
	L(\theta_1, w) - L(\theta_2, w) \geq \langle L'_\theta(\theta_2, w), \,\, \theta_1 - \theta_2 \rangle.
	\nn
	\end{align}
\end{assumption}


\subsection{Preliminary results}

In this subsection, we introduce lemmas which lay the foundation for the proof of the main convergence result that follows. First, our proof relies on the following important lemma, which is a slightly adjusted version of Lemma 3 in \cite{xiao2014proximal} for our problem setting.
\begin{lemma}
	\label{t:lemma_3} Consider any function of the form $P(x) = f(x) + g(x)$, with $x \in \Re^d$.
	Suppose $f(x)$ is {linear in $x$}, and $g(x)$ is $\mu_g$-strongly convex. Then, for {$ \alpha > 0$}, the following holds for any vector $v \in \Re^d$ and $y \in \Re^d$:
	\[
	P(y) \geq P(x^{(+)}) + \frac{1}{\alpha} \langle (x - x^{(+)}), (y - x) \rangle + \langle ( v - f' (x) ),  (x^{(+)} - y) \rangle + { \frac{1}{\alpha} } \| x - x^{(+)} \|^2 + \frac{\mu_g}{2} \| y - x^{(+)} \|^2
	\]
	where:
	\[
	\begin{aligned}
	x^{(+)}
	&= \prox_{\alpha g} \big\{ x - \alpha v \big\}
	\\
	& = \arg\min_{w} \Big\{ g(w) + \frac{1}{2\alpha} \| w - x + \alpha v\|^2 \Big\}        \label{e:lemma:proxsvrg_prelim}
	\end{aligned}
	\]
\end{lemma}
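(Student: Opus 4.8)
The plan is to derive the inequality directly from the first-order optimality (subgradient) condition of the proximal step, then invoke the $\mu_g$-strong convexity of $g$ and the \emph{exactness} of the first-order expansion of the linear part $f$, and finally regroup the inner products into the three target terms.

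First I would record the optimality condition at $x^{(+)}$. Since $x^{(+)}$ minimizes the strongly convex map $w \mapsto g(w) + \frac{1}{2\alpha}\|w - x + \alpha v\|^2$, there exists a subgradient $\xi \in \partial g(x^{(+)})$ satisfying $\xi + \frac{1}{\alpha}(x^{(+)} - x + \alpha v) = 0$, equivalently $\xi = \frac{1}{\alpha}(x - x^{(+)}) - v$. Next, by $\mu_g$-strong convexity of $g$, for the arbitrary point $y$ we have
\[
g(y) \geq g(x^{(+)}) + \langle \xi,\, y - x^{(+)}\rangle + \frac{\mu_g}{2}\|y - x^{(+)}\|^2,
\]
and substituting the expression for $\xi$ turns the middle term into $\langle \frac{1}{\alpha}(x - x^{(+)}) - v,\, y - x^{(+)}\rangle$.

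The crucial use of the hypothesis that $f$ is \emph{linear} enters now: linearity gives the exact identity $f(y) = f(x^{(+)}) + \langle f'(x),\, y - x^{(+)}\rangle$, with $f'$ a constant so that $f'(x) = f'(x^{(+)})$. Adding this to the bound on $g(y)$ and writing $P = f + g$ yields
\[
P(y) \geq P(x^{(+)}) + \langle f'(x) - v,\, y - x^{(+)}\rangle + \frac{1}{\alpha}\langle x - x^{(+)},\, y - x^{(+)}\rangle + \frac{\mu_g}{2}\|y - x^{(+)}\|^2.
\]
To finish I would regroup: the term $\langle f'(x) - v,\, y - x^{(+)}\rangle$ equals $\langle v - f'(x),\, x^{(+)} - y\rangle$, which is the second target term; and splitting $y - x^{(+)} = (y - x) + (x - x^{(+)})$ in the remaining inner product gives $\frac{1}{\alpha}\langle x - x^{(+)},\, y - x\rangle + \frac{1}{\alpha}\|x - x^{(+)}\|^2$. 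These are exactly the three linear/quadratic terms in the claimed bound, completing the proof.

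The step I expect to be the main obstacle is conceptual rather than computational: one must handle the possible nonsmoothness of $g$ through a subgradient (not a gradient), and recognize that it is the \emph{linearity} of $f$ (stronger than mere convexity) that supplies an \emph{equality} in the first-order expansion of $f$, so that no slack is lost there and the inequality is driven entirely by the strong-convexity term $\frac{\mu_g}{2}\|y - x^{(+)}\|^2$. Once the optimality condition is written correctly, the remaining algebra is a routine regrouping of inner products.
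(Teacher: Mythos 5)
Your proposal is correct and follows essentially the same route as the paper's proof: the subgradient optimality condition $\xi = \frac{1}{\alpha}(x - x^{(+)}) - v$, the exact first-order expansion of the linear part $f$ combined with the strong-convexity inequality for $g$, and the final regrouping via $y - x^{(+)} = (y-x) + (x - x^{(+)})$. No gaps.
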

\begin{proof}
	Based on the definition of $x^{(+)}$, the optimality condition associated with the proximal operator states that there exists a sub-gradient $\xi \in \partial g(x^{(+)})$ such that
	\begin{equation}
	\begin{aligned}
	\frac{x^{(+)} - x  +  \alpha v}{\alpha} + \xi & = 0
	\end{aligned}
	\label{e:lemma3:subgrad}
	\end{equation}
	where $\partial g(x^{(+)})$ denotes the sub-differential of $g$ at $x^{(+)}$. Next, by the linearity of $f$ and the strong convexity of $g$, we have, for any $x, y \in \Re^d$,
	\begin{equation}
	\begin{aligned}
	\hspace{-0.32in}P(y) 
	& = f(y) + g(y)
	\\
	& \overset{(a)}{\geq} f(x^{(+)}) + \langle f'(x)\,, (y - x^{(+)}) \rangle + g (x^{(+)}) + \langle \xi\,, (y - x^{(+)}) \rangle + \frac{\mu_g}{2} \| y - x^{(+)} \|^2
	\\
	& \overset{(b)}{=} P(x^{(+)}) + \langle f'(x)\,, (y - x^{(+)}) \rangle + \langle \xi\,, (y - x^{(+)}) \rangle + \frac{\mu_g}{2} \| y - x^{(+)} \|^2
	\\
	& \overset{(c)}{=} P(x^{(+)}) + \langle f'(x)\,, (y - x^{(+)}) \rangle - \frac{1}{\alpha} \langle ( x^{(+)} - x + \alpha v ) \,, (y - x^{(+)}) \rangle + \frac{\mu_g}{2} \| y - x^{(+)} \|^2
	\\
	& \overset{(d)}{=} P(x^{(+)}) + \langle f'(x) - v , (y - x^{(+)}) \rangle - \frac{1}{\alpha} \langle ( x^{(+)} - x) \,, (y - x^{(+)}) \rangle + \frac{\mu_g}{2} \| y - x^{(+)} \|^2
	\\
	& \overset{(e)}{=} P(x^{(+)}) + \langle f'(x) - v , (y - x^{(+)}) \rangle + \frac{1}{\alpha} \| x^{(+)} - x \|^2 
	\\
	& \hspace{0.25in}- \frac{1}{\alpha} \langle ( x^{(+)} - x) \,, (y - x) \rangle + \frac{\mu_g}{2} \| y - x^{(+)} \|^2
	\end{aligned}
	\end{equation}
	where step (a) follows from the linearity of $f$ and the strong convexity of the function $g$, step $(b)$ uses the definition $P(x^{(+)}) = f(x^{(+)}) + g(x^{(+)})$, step $(c)$ substitutes the expression of $\xi$ from \eqref{e:lemma3:subgrad}, step $(d)$ rearrange the second and the third terms, and step $(e)$ completes the proof by adding and subtracting $x$ in the second inner product.
\end{proof}
The difference between our Lemma \ref{t:lemma_3} and Lemma 3 in \cite{xiao2014proximal} is that our function $f(x)$ is linear (instead of being strongly convex) in $x$, which is the setting that we are mainly interested in (i.e., linear dependency on the dual variables). As a result, our $\alpha$ can be any positive number (it is constrained to be smaller than a certain positive number in Lemma 3 of \cite{xiao2014proximal}). This lemma is useful for deriving a bound when the update recursions are defined by a proximal mapping with an arbitrary update vector $v$. This is particularly helpful for our case as both our primal and dual updates are in proximal mapping form with the update vector $v$ being variance reduced stochastic gradient. 

Next, we quote the Lemma 2 of \cite{xiao2014proximal} below:
\begin{lemma}
	Let $R$ be a closed convex function on $\Re^d$ and let $x, y \in \dom(R)$. Then:
	\begin{equation}
	\|\prox_R(x) - \prox_R(y) \| \leq \|x - y\|
	\end{equation}
	\label{t:SVRPDA_I:lemma2}
\end{lemma}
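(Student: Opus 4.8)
The plan is to combine the first-order optimality conditions of the two proximal minimizations with the monotonicity of the subdifferential of a convex function. First I would set $u = \prox_R(x)$ and $v = \prox_R(y)$, recalling that by definition $u = \argmin_w \{ R(w) + \frac{1}{2}\|w - x\|^2 \}$ and $v = \argmin_w \{ R(w) + \frac{1}{2}\|w - y\|^2 \}$. Since each objective is strongly convex, the minimizers are unique, and the same optimality argument used in \eqref{e:lemma3:subgrad} (the stationarity condition of the proximal map with step size $1$) yields subgradients $x - u \in \partial R(u)$ and $y - v \in \partial R(v)$.

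Next I would invoke the monotonicity of the operator $\partial R$, which holds because $R$ is convex: for any $p \in \partial R(u)$ and $q \in \partial R(v)$ one has $\langle p - q,\, u - v \rangle \geq 0$. Applying this with $p = x - u$ and $q = y - v$ gives
\[
\langle (x - u) - (y - v),\, u - v \rangle \geq 0,
\]
which rearranges to $\langle x - y,\, u - v \rangle \geq \| u - v \|^2$.

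Finally I would bound the left-hand side by Cauchy--Schwarz, $\langle x - y,\, u - v \rangle \leq \| x - y \|\,\| u - v \|$, so that $\| u - v \|^2 \leq \| x - y \|\,\| u - v \|$. Dividing through by $\| u - v \|$ (the case $\| u - v \| = 0$ being immediate) yields the claimed non-expansiveness $\| u - v \| \leq \| x - y \|$.

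The argument has no serious obstacle; the only point requiring care is the monotonicity step, which is itself a standard consequence of convexity: adding the two subgradient inequalities $R(v) \geq R(u) + \langle p,\, v - u \rangle$ and $R(u) \geq R(v) + \langle q,\, u - v \rangle$ gives exactly $\langle p - q,\, u - v \rangle \geq 0$. Since this lemma is quoted verbatim as Lemma~2 of \cite{xiao2014proximal}, I would expect the authors simply to cite it rather than reproduce this short derivation.
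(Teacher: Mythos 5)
Your proof is correct: the stationarity condition $x - u \in \partial R(u)$, $y - v \in \partial R(v)$, monotonicity of $\partial R$, and Cauchy--Schwarz give exactly the claimed non-expansiveness (in fact the stronger firm non-expansiveness $\langle x - y,\, u - v\rangle \ge \|u-v\|^2$ along the way). The paper itself offers no derivation --- it states the result verbatim as Lemma~2 of \cite{xiao2014proximal} and cites it --- so your argument is simply the standard proof of the quoted fact, as you anticipated.
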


We next introduce a useful property of the conjugate function:
\begin{lemma}
	\label{lemma:conjugate_Lipschitz}
	When Assumption~\ref{a:phi:lipschitz} holds, the domain of $\phi_i^*(w_i)$, denoted as $\mathrm{domain}(\phi_i)$, satisfies
	\begin{align}
	\mathrm{domain}(\phi_i) \subseteq \{w_i: \|w_i\| \le B_w\}
	\end{align}
	That is, for any $w_i$ that satisfies $\|w_i\| > B_w$, we will have $\phi_i^*(w_i) = +\infty$. In consequence, the dual variables $w_i^{(k)}$ obtained from the dual update \eqref{e:SVRPDA_I:dual_update} will always be bounded by $B_w$ throughout the iterations.
\end{lemma}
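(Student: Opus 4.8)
The plan is to establish the domain containment from the classical duality between Lipschitz continuity of a convex function and boundedness of the domain of its conjugate, and then read off the boundedness of the dual iterates directly from the structure of the proximal update.

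First I would recall the definition of the conjugate, $\phi_i^*(w_i) = \sup_{u \in \Re^\ell}(\langle u, w_i \rangle - \phi_i(u))$, and fix any $w_i$ with $\|w_i\| > B_w$. The idea is to probe the supremum along the ray $u = t \, w_i / \|w_i\|$ for $t > 0$. On this ray we have $\langle u, w_i \rangle = t\|w_i\|$, while Assumption~\ref{a:phi:lipschitz} applied with the reference point $u' = 0$ gives $\phi_i(u) \le \phi_i(0) + B_w \|u\| = \phi_i(0) + B_w t$. Subtracting, the quantity inside the supremum is at least $t(\|w_i\| - B_w) - \phi_i(0)$, which tends to $+\infty$ as $t \to \infty$ because $\|w_i\| - B_w > 0$. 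Hence $\phi_i^*(w_i) = +\infty$, which proves that the domain of $\phi_i^*$ is contained in $\{w_i : \|w_i\| \le B_w\}$.

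For the boundedness of the iterates, I would observe that the dual update \eqref{e:SVRPDA_I:dual_update} minimizes an objective that contains $\phi_i^*(w_i)$ as an additive term, while the remaining contributions (the linear term and the quadratic proximal term) are finite everywhere. Since $\phi_i$ is convex and Lipschitz, $\phi_i^*$ is a proper closed convex function with nonempty domain, so the minimum in \eqref{e:SVRPDA_I:dual_update} is attained at a point where the objective is finite; this forces the minimizer into the domain of $\phi_i^*$. Combining this with the domain containment just established yields $\|w_i^{(k)}\| \le B_w$ for every iteration $k$ and every coordinate $i$, as claimed.

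I do not expect a genuine obstacle here, as this is a standard convex-analytic fact; the only points requiring minor care are verifying that $\phi_i^*$ is proper (so that the $\argmin$ in \eqref{e:SVRPDA_I:dual_update} is well-defined and necessarily lands in the domain) and the choice of probing direction in the ray argument. I would emphasize in a closing remark that this equivalence between the Lipschitz constant $B_w$ of $\phi_i$ and the radius of the conjugate's domain is precisely what renders the dual iterates automatically bounded, a property that will be invoked repeatedly when bounding the variance of the stochastic gradients in the subsequent convergence analysis.
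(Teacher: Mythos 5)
Your proof is correct and follows essentially the same route as the paper's: both probe the conjugate's supremum along the ray in direction $w_i/\|w_i\|$ and use the Lipschitz assumption to show the quantity inside the supremum grows like $t(\|w_i\|-B_w)\to+\infty$, the only cosmetic difference being that you anchor the ray at the origin while the paper anchors it at an arbitrary base point $u_i$. Your explicit justification of the iterate bound (the $\argmin$ in the dual update must land in the domain of $\phi_i^*$ because the objective is $+\infty$ outside it) is a detail the paper leaves implicit, and it is the right reasoning.
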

\begin{proof}
	For any given $w_i$ that satisfies $\|w_i\| > B_w$, define $u_i^t = u_i+\frac{w_i}{\|w_i\|} t$, where $t$ is an arbitrary real scalar. Then, by the definition of conjugate function, we have
	\begin{align}
	\phi_i^*(w_i)
	&=
	\sup_{u_i} \Big[
	\langle w_i, u_i \rangle
	-
	\phi_i(u_i)
	\Big]
	\nn\\
	&\overset{(a)}{\ge}
	\sup_{t}
	\Big[
	\Big\langle w_i, u_i + \frac{w_i}{\|w_i\|} t \Big\rangle
	-
	\phi_i\Big(u_i + \frac{w_i}{\|w_i\|} t\Big)
	\Big]
	\nn\\
	&=
	\sup_{t}
	\Big[
	\|w_i\| t
	-
	\phi_i\Big(u_i + \frac{w_i}{\|w_i\|} t\Big)
	+
	\phi_i(u_i)
	\Big]
	+
	\langle w_i, u_i \rangle
	-
	\phi_i(u_i)
	\nn\\
	&\overset{(b)}{\ge}
	\sup_{t}
	\Big[
	\|w_i\| t
	-
	B_w
	\Big\|
	\frac{w_i}{\|w_i\|} t
	\Big\|
	\Big]
	+
	\langle w_i, u_i \rangle
	-
	\phi_i(u_i)
	\nn\\
	&=
	\sup_{t}
	\Big[
	(\|w_i\| - B_w) t
	\Big]
	+
	\langle w_i, u_i \rangle
	-
	\phi_i(u_i)
	\nn\\
	&=
	+\infty
	\end{align}
	where step (a) uses the fact that the supremum over a subset (line) is smaller, and step (b) uses the following inequality obtained from the Lipschitz property of $\phi_i(u_i)$: 
	\begin{align}
	| \phi_i(u) - \phi_i(u') |
	&\le
	B_w \| u - u' \|, \quad \forall u, u'
	\quad \Rightarrow \quad
	-\phi_i(u) + \phi_i(u')
	\ge
	- B_w \| u - u' \|
	\end{align}
\end{proof}

\subsection{Dual Bound}

In order to derive the bound for the dual update, we first introduce an auxiliary dummy variable $w_{ij}'$:
\begin{align}
w'_{ij} 
&=
\displaystyle 
\argmin_{w_i} 
\Big [ 
- \Big \langle \delta_{i j}^w, \, w_i - w_i^{(k - 1)} \Big \rangle + \phi_i^*(w_i) + \frac{1}{2 \alpha_w} \|w_i - w_i^{(k - 1)} \|^2 
\Big ]
\nn\\
&=
\prox_{\alpha_w \phi_i^*}
\Big[
w_i^{(k-1)} - \alpha_w \delta_{ij}^w
\Big]
\label{e:SVRPDA_I:dual_dummy_variable}
\end{align}
where,
\begin{equation}
\delta_{i j}^w \eqdef f_{i j} (\theta^{(k-1)} )  -   f_{i j} (\tiltheta)   +   \barf_{i}(\tiltheta)
\label{e:SVRPDA_I:delta_ij_w}
\end{equation}
The variable $w_{ij}'$ can be understood as the updated value of the dual variable if $i$ and $j$ is selected. 

Our analysis in this section focuses on deriving bounds for the $\|w^{(k)}-w^*\|^2$. We will first examine $\|w_{ij}' - w_i^*\|^2 $ and then relate it to $\|w^{(k)}-w^*\|^2$. To begin with, for each $i$ and $j$, we have
\begin{equation}
\begin{aligned}
\|w_{ij}' - w_i^*\|^2 
& = \|w_{ij}' - w_i^{(k - 1)} + w_i^{(k - 1)} - w_i^*\|^2
\\
& = \|w_{ij}' - w_i^{(k - 1)} \|^2 + \| w_i^{(k - 1)} - w_i^* \|^2 + 2 \langle (w_{ij}' - w_{i}^{(k - 1)}), (w_{i}^{(k - 1)} - w_{i}^{*}) \rangle
\end{aligned}
\label{e:SVRPDA_I:dummy_initial_eq}
\end{equation}
Now, we upper bound the first and the third terms in \eqref{e:SVRPDA_I:dummy_initial_eq} together. For a given $\theta^{(k)}$ and $i$, define
\begin{equation}
P_{w_i}(x) \eqdef - \big  \langle \barf_i(\theta^{(k - 1)})  , x \big \rangle + \phi_i^*(x)
\label{e:SVRPDA_I:Pwi_no_cor_dual}
\end{equation}
Note that the first part of the function is linear in $x$ and the second part of the function is $\gamma$-strongly convex (since $\phi_i$ is $1/\gamma$-smooth by Assumption~\ref{a:g_phi:sc_sm}). 
Furthermore, by \eqref{e:SVRPDA_I:dual_dummy_variable}, the update rule for the dummy variables $w_{ij}'$ is defined by a proximal operator. Therefore, we can apply \Lemma{t:lemma_3} with $P(x) \equiv P_{w_i}(x)$ and the following identifications:
\[
f(x) = - \big  \langle \barf_i(\theta^{(k - 1)})  , x \big \rangle
\quad g(x) = \phi_i^*(x)
\quad v = - \delta_{i j}^w
\quad x = w_{i}^{(k - 1)} 
\quad x^{(+)} = w_{ij}' 
\quad y = w_i^*
\quad \alpha = \alpha_w
\]
which leads to
\begin{equation}
\begin{aligned}
- \big  \langle \barf_i(\theta^{(k - 1)})  , w_i^* \big \rangle + \phi_i^*(w_i^*) 
\geq 
& - \big  \langle \barf_i(\theta^{(k - 1)})  , w_{ij}' \big \rangle + \phi_i^*(w_{ij}')  
\\
& + \frac{1}{\alpha_w} \langle (w_{i}^{(k - 1)} - w_{ij}'), (w_{i}^{*} - w_{i}^{(k - 1)}) \rangle 
\\
& - \big \langle \delta_{i j}^w - \barf_i(\theta^{(k - 1)}) \,, w'_{ij} - w^*_i \big \rangle
\\
& + \frac{1}{\alpha_w} \| w_{i}^{(k - 1)} - w_{i j}' \|^2 
+ \frac{\gamma}{2} \| w_{i}^{*} - w_{i j}' \|^2
\end{aligned}
\label{e:SVRPDA_I:lemma3_dual}
\end{equation}
Furthermore, by definition, since $w_i^*$ is the optimal solution to the following optimization problem,
\[
w_i^* = \argmin_{w_i} \,\, \Big \{  \phi_i^*(w_i) - \big \langle \barf_{i}(\theta^*), w_i \big \rangle \Big \}
\]
and by the fact that the cost function inside the above $\argmin$ is $\gamma$-strongly convex due to $\phi_i^*(\cdot)$, we have
\begin{equation}
\begin{aligned}
- \Big \langle \barf_i(\theta^*), \,  w_{ij}'  \Big \rangle + \phi_i^*(w_{ij}') \geq
- \Big \langle \barf_i(\theta^*), \, w_i^* \Big \rangle + \phi_i^*(w_i^*) +  \frac{\gamma}{2} {\|w_{ij}' - w_i^* \|^2}
\label{e:SVRPDA_I:w_i_star_is_optimal}
\end{aligned}
\end{equation}
Adding \eqref{e:SVRPDA_I:lemma3_dual} and \eqref{e:SVRPDA_I:w_i_star_is_optimal} cancels the $\phi_i^*$ terms and leads to
\begin{equation}
\begin{aligned}
\big  \langle \barf_i(\theta^{(k - 1)}) - \barf_i(\theta^*) , w_{ij}' - w_i^* \big \rangle
\geq 
&\frac{1}{\alpha_w} \langle (w_{i}^{(k - 1)} - w_{ij}'), (w_{i}^{*} - w_{i}^{(k - 1)}) \rangle 
\\
& - \big \langle \delta_{i j}^w - \barf_i(\theta^{(k - 1)}) \,, w'_{ij} - w^*_i \big \rangle
\\
& + \frac{1}{\alpha_w} \| w_{i}^{(k - 1)} - w_{ij}' \|^2 
+ {\gamma} \| w_{i}^{*} - w_{ij}' \|^2
\end{aligned}
\label{e:SVRPDA_I:initial_ineq}
\end{equation}
Multiplying both sides by $2\alpha_w$ and rearranging the terms, we obtain
\begin{equation}
\begin{aligned}
& \hspace{-0.32in} 2 \alpha_w \big  \langle \barf_i(\theta^{(k - 1)}) - \barf_i(\theta^*) , w_{ij}' - w_i^* \big \rangle 
\!+\! 
2 \alpha_w \big \langle \delta_{i j}^w - \barf_i(\theta^{(k - 1)}) \,, w'_{ij} - w^*_i \big \rangle
\! - \! 
2 \alpha_w \gamma \|w_{ij}' - w_i^*\|^2
\\
& \geq 
2 \langle (w_{i}^{(k - 1)} - w_{ij}'), (w_{i}^{*} - w_{i}^{(k - 1)}) \rangle 
\rangle 
+ 2 \| w_{i}^{(k - 1)} - w_{ij}' \|^2
\end{aligned}
\label{e:SVRPDA_I:bd_inner_and_norm_sq}
\end{equation}
Now, observe that inequality \eqref{e:SVRPDA_I:bd_inner_and_norm_sq} could be used as an uppper bound for the first and third terms on the right hand side of \eqref{e:SVRPDA_I:dummy_initial_eq}. Using this, \eqref{e:SVRPDA_I:dummy_initial_eq} becomes:
\begin{equation}
\begin{aligned}
\|&w_{ij}' - w_i^*\|^2 \\
& = \| w_i^{(k - 1)} - w_i^* \|^2 + \|w_{ij}' - w_i^{(k - 1)} \|^2 + 2 \langle (w_{ij}' - w_{i}^{(k - 1)}), (w_{i}^{(k - 1)} - w_{i}^{*}) \rangle
\\
& \overset{(a)}{=}
\| w_i^{(k - 1)} - w_i^* \|^2 
-
\|w_{ij}' - w_i^{(k - 1)} \|^2
+ 
2\|w_{ij}' - w_i^{(k - 1)} \|^2 + 2 \langle (w_{ij}' - w_{i}^{(k - 1)}), (w_{i}^{(k - 1)} - w_{i}^{*}) \rangle
\\
&\le
\| w_i^{(k - 1)} - w_i^* \|^2 
- \|w_{ij}' - w_i^{(k - 1)} \|^2 
+ 2 \alpha_w \big  \langle \barf_i(\theta^{(k - 1)}) - \barf_i(\theta^*) , w_{ij}' - w_i^* \big \rangle 
\\
& + {2 \alpha_w \big \langle \delta_{i j}^w - \barf_i(\theta^{(k - 1)}) \,, w'_{ij} - w^*_i \big \rangle}
- 2 \alpha_w \gamma \|w_{ij}' - w_i^*\|^2
\end{aligned}
\label{e:SVRPDA_I:dummy_initial_eq_bdd}
\end{equation}
where step (a) added and subtracted a $\| w_{i}^{(k - 1)} - w_{ij}' \|^2$ in order to apply \eqref{e:SVRPDA_I:bd_inner_and_norm_sq} in the following inequality. Dividing both sides by $2 \alpha_w$ and combining common terms, we get the following bound:
\begin{equation}
\begin{aligned}
\hspace{-0.32in} \Big( \frac{1}{2 \alpha_w} + \gamma \Big)\|w_{ij}' - w_i^*\|^2
& \leq \frac{1}{2 \alpha_w} \| w_i^{(k - 1)} - w_i^* \|^2 
- \frac{1}{2 \alpha_w} \|w_{ij}' - w_i^{(k - 1)} \|^2
\\
& + \big  \langle \barf_i(\theta^{(k - 1)}) - \barf_i(\theta^*) , w_{ij}' - w_i^* \big \rangle 
+ \big \langle \delta_{i j}^w - \barf_i(\theta^{(k - 1)}) \,, w'_{ij} - w^*_i \big \rangle
\end{aligned}
\label{e:SVRPDA_I:dummy_bd}
\end{equation}

Next, we will bound the last term in \eqref{e:SVRPDA_I:dummy_bd}.
Consider the full batch dual ascent algorithm. In this case, for each $0 \leq i \leq n_X - 1$, the update rule is given by:
\begin{equation}
\begin{aligned}
\barw^{(k)}_{i} 
&=
\displaystyle 
\argmin_{w_i} 
\Big [ 
- 
\Big \langle \barf_i(\theta^{(k - 1)}), \, w_i \Big \rangle + \phi_i^*(w_i) + 
\frac{1}{2 \alpha_w} \|w_i - w_i^{(k - 1)} \|^2 
\Big ]
\\
&=
\prox_{\alpha_w \phi_i^*}
\Big[
w_i^{(k-1)} - \alpha_w \barf_i(\theta^{(k-1)})
\Big]
\label{e:SVRPDA_I:PDA_SCO_batch_dual_prox_svrg_2}
\end{aligned}
\end{equation}
The above update rule will only be used for analysis.
Considering the last term in the right hand side of \eqref{e:SVRPDA_I:dummy_bd}; we have:
\begin{equation}
\begin{aligned}
\hspace{-0.3in} \big \langle \delta_{i j}^w - \barf_i(\theta^{(k - 1)}) \,, w'_{ij} - w^*_i \big \rangle
\! = \! 
& 
\big \langle 
\delta_{i j}^w - \barf_i(\theta^{(k - 1)}) \,, w'_{ij} - \barw^{(k)}_i 
\big \rangle
\! + \! 
\big \langle 
\delta_{i j}^w - \barf_i(\theta^{(k - 1)}) \,, \barw^{(k)}_i - w_i^* 
\big \rangle
\\
\overset{(a)}{\leq}
& 
\| \delta_{i j}^w - \barf_i(\theta^{(k - 1)}) \| 
\cdot 
\| w_{ij}' - \barw^{(k)}_i \|
+ 
\big \langle 
\delta_{i j}^w - \barf_i(\theta^{(k - 1)}) \,, \barw^{(k)}_i - w_i^* 
\big\rangle
\\
\overset{(b)}{\leq}
& 
\alpha_w \| \delta_{i j}^w - \barf_i(\theta^{(k - 1)}) \|^2
+ 
\big \langle 
\delta_{i j}^w - \barf_i(\theta^{(k - 1)}) \,, \barw^{(k)}_i - w_i^* 
\big \rangle
\end{aligned}
\label{e:SVRPDA_I:delta_ij_term}
\end{equation}
where $(a)$ uses Cauchy-Schwarz inequality, and step $(b)$ substitutes the proximal expressions of $w_{ij}'$ in \eqref{e:SVRPDA_I:dual_dummy_variable} and $\barw_i^{(k)}$ in \eqref{e:SVRPDA_I:delta_ij_term} followed by \Lemma{t:SVRPDA_I:lemma2}.  
Averaging both sides of the inequality over all $0 \leq j \leq n_{Y_i} - 1$ and using the fact that the average of the second term on the right hand side of \eqref{e:SVRPDA_I:delta_ij_term} is zero, we get:
\begin{align}
&\frac{1}{n_{Y_i}} \sum_{j = 0}^{n_{Y_i} - 1} 
\big[ 
\big \langle 
\delta_{i j}^w - \barf_i(\theta^{(k - 1)}) \,, w_{ij}' - w^*_i 
\big \rangle 
\big ] 
\nn\\
\leq& 
\frac{\alpha_w}{n_{Y_i}} \sum_{j = 0}^{n_{Y_i} - 1} 
\big[ 
\| \delta_{i j}^w - \barf_i(\theta^{(k - 1)}) \|^2 
\big ]
\nn\\
=&
\frac{\alpha_w}{n_{Y_i}} \sum_{j = 0}^{n_{Y_i} - 1} 
\big[ 
\| 
f_{ij}(\theta^{(k-1)}) - f_{ij}(\tiltheta) + \barf_{i}(\tiltheta)
- \barf_i(\theta^{(k-1)})
\|^2 
\big ]
\nn\\
\overset{(a)}{=}&
\frac{\alpha_w}{n_{Y_i}} \sum_{j = 0}^{n_{Y_i} - 1} 
\Big\| 
\Big(1-\frac{1}{n_{Y_i}}\Big)
f_{ij}(\theta^{(k-1)}) 
\nn\\
& \hspace{0.34in} - 
\Big(1 \! - \!\frac{1}{n_{Y_i}}\Big)
f_{ij}(\tiltheta) 
\! - \! 
\Big(
\barf_i(\theta^{(k-1)})
\! - \!
\barf_{i}(\tiltheta)
\! - \!
\frac{1}{n_{Y_i}} f_{ij}(\theta^{(k-1)})
\! + \!
\frac{1}{n_{Y_i}} f_{ij}(\tiltheta)
\Big)
\Big\|^2
\nn\\
\overset{(b)}{\le}&
\frac{2\alpha_w}{n_{Y_i}} \sum_{j = 0}^{n_{Y_i} - 1} 
\Bigg[
\Big\| 
\Big(1-\frac{1}{n_{Y_i}}\Big)
f_{ij}(\theta^{(k-1)}) 
- 
\Big(1-\frac{1}{n_{Y_i}}\Big)
f_{ij}(\tiltheta) 
\Big\|^2
\nn\\
& \hspace{1.44in} +
\Big\|
\barf_i(\theta^{(k-1)})
-
\barf_{i}(\tiltheta)
-
\frac{1}{n_{Y_i}} f_{ij}(\theta^{(k-1)})
+
\frac{1}{n_{Y_i}} f_{ij}(\tiltheta)
\Big\|^2
\Bigg]
\nn\\
=&
\frac{2\alpha_w}{n_{Y_i}} \sum_{j = 0}^{n_{Y_i} - 1} 
\Bigg[
\Big\| 
\Big(1-\frac{1}{n_{Y_i}}\Big)
f_{ij}(\theta^{(k-1)}) 
- 
\Big(1-\frac{1}{n_{Y_i}}\Big)
f_{ij}(\tiltheta) 
\Big\|^2
\nn\\
& \hspace{1.44in} +
\Big\|
\frac{n_{Y_i}-1}{n_{Y_i}}
\frac{1}{n_{Y_i}-1}
\sum_{\ell \neq j}
\big[
f_{i\ell}(\theta^{k-1})
-
f_{i\ell}(\tiltheta)
\big]
\Big\|^2
\Bigg]
\nn\\
\overset{(c)}{\le}&
\frac{2\alpha_w}{n_{Y_i}} \sum_{j = 0}^{n_{Y_i} - 1} 
\Bigg[
\Big\| 
\Big(1-\frac{1}{n_{Y_i}}\Big)
f_{ij}(\theta^{(k-1)}) 
- 
\Big(1-\frac{1}{n_{Y_i}}\Big)
f_{ij}(\tiltheta) 
\Big\|^2
\nn\\
& \hspace{1.44in} +
\Big(\frac{n_{Y_i}-1}{n_{Y_i}}\Big)^2
\frac{1}{n_{Y_i}-1}
\sum_{\ell \neq j}
\Big\|
\big[
f_{i\ell}(\theta^{k-1})
-
f_{i\ell}(\tiltheta)
\big]
\Big\|^2
\Bigg]
\nn\\
\overset{(d)}{\le}&
2\alpha_w 
\bigg[
\Big( 1 - \frac{1}{n_{Y_i}} \Big)^2
B_f^2 \| \theta^{(k-1)} - \tiltheta \|^2
+
\frac{n_{Y_i}-1}{n_{Y_i}^2}
B_f^2 \| \theta^{(k-1)} - \tiltheta \|^2
\bigg]
\nn\\
=&
2\alpha_w B_f^2 \Big( 1 - \frac{1}{n_{Y_i}}\Big) 
\| \theta^{(k-1)} - \tiltheta \|^2
\label{e:SVRPDA_I:expect_delta_ij_term}
\end{align}
where step (a) follows by adding and subtracting $\frac{1}{n_{Y_i}} \big( f_{ij}(\theta^{(k-1)}) \!-\! f_{ij}(\tiltheta) \big)$, step (b) uses $\|a+b\|^2 \le 2\|a\|^2+2\|b\|^2$, step (c) applies Jensen's inequality to the second term, step (d) applies Assumption~\ref{a:f:smooth_bg} (uniformly bounded gradients implies the uniform Liphschitz continuity of the functions $f_{ij}$) to both terms. Averaging both sides of \eqref{e:SVRPDA_I:dummy_bd} over all $0 \leq j \leq n_{Y_i} - 1$, and using \eqref{e:SVRPDA_I:expect_delta_ij_term}, we obtain:
\begin{equation}
\begin{aligned}
& \hspace{-0.45in} \Bigg( \frac{1}{2 \alpha_w} + \gamma \Bigg) \frac{1}{n_{Y_i}} \sum_{j = 0}^{n_{Y_i} - 1} \|w_{ij}' - w_i^*\|^2
\! \leq \! \frac{1}{2 \alpha_w} \| w_i^{(k - 1)} - w_i^* \|^2 
- \frac{1}{2 \alpha_w} \frac{1}{n_{Y_i}} \sum_{j = 0}^{n_{Y_i} - 1} \|w_{ij}' - w_i^{(k - 1)} \|^2
\\
& \hspace{0.3in} + \frac{1}{n_{Y_i}} \sum_{j = 0}^{n_{Y_i} - 1} \big  \langle \barf_i(\theta^{(k - 1)}) - \barf_i(\theta^*) , w_{ij}' - w_i^* \big \rangle 
+ 2\alpha_w B_f^2 \Big( 1 - \frac{1}{n_{Y_i}} \Big) \|\theta^{(k-1)} - \tiltheta  \|^2
\end{aligned}
\label{e:SVRPDA_I:dummy_bd_sum_j}
\end{equation}

Finally, we relate the bound for $w_{ij}'$ back to the bound for the dual variable $w_i^{(k)}$. Recall that, for each $w_i$, there is a probability $1/n_X$ that it will be selected and updated, and a probability of $(n_X-1)/n_X$ that it will be kept the same as $w_i^{(k-1)}$. Furthermore, conditioned on the fact that $w_i$ is selected, it will be updated to $w_{ij}'$ with probability $1/n_{Y_i}$. Therefore, for each $w_i$, there is a probability $1/n_X n_{Y_i}$ that it will be updated to $w_{ij}'$ for $j=0,\ldots,n_{Y_i}-1$, and a probability of $(n_X-1)/n_X$ that it remains the same. Therefore, letting $\clF_k$ denote the filtration of all events upto the beginning of iteration $k$ (before the dual update step), we have:
\begin{equation}
\begin{aligned}
\Expect \{ w_{i}^{(k)} \mid \clF_k \} &= \frac{1}{n_X} \frac{1}{n_{Y_i}} \sum_{j=0}^{n_{Y_i}-1} w'_{ij} + \frac{n_X-1}{n_X} w_{i}^{(k - 1)} 
\\
\Expect \{ w_{i}^{(k)} - w_i^*\mid \clF_k \} &= \frac{1}{n_X} \frac{1}{n_{Y_i}} \sum_{j=0}^{n_{Y_i}-1} (w'_{ij} - w_i^*) + \frac{n_X-1}{n_X} (w_{i}^{(k - 1)} - w_i^*)
\\
\Expect \{ \| w_{i}^{(k)} - w_i^*\|^2 \mid \clF_k \} &= \frac{1}{n_X} \frac{1}{n_{Y_i}} \sum_{j=0}^{n_{Y_i}-1} \| w'_{ij} - w_i^*\|^2   + \frac{n_X-1}{n_X} \| w_{i}^{(k - 1)} - w_i^*\|^2
\\
\Expect \{ \| w_{i}^{(k)} - w_i^{(k - 1)}\|^2 \mid \clF_k \} &= \frac{1}{n_X} \frac{1}{n_{Y_i}} \sum_{j=0}^{n_{Y_i}-1} \| w'_{ij} - w_i^{(k - 1)}\|^2
\end{aligned}
\label{e:SVRPDA_I:expec_w_i}
\end{equation}
Using \eqref{e:SVRPDA_I:expec_w_i} in \eqref{e:SVRPDA_I:dummy_bd_sum_j}, we obtain:
\begin{equation}
\begin{aligned}
&\hspace{-1.2in} \Bigg( \frac{1}{2 \alpha_w} + \gamma \Bigg) \Bigg( n_X \Expect \{ \|w_{i}^{(k)} - w_i^*\|^2 \mid \clF_k \} - (n_X - 1) \| w_i^{(k - 1)} - w_i^* \|^2  \Bigg)
\\
& \leq \frac{1}{2 \alpha_w} \| w_i^{(k - 1)} - w_i^* \|^2 
- \frac{n_X}{2 \alpha_w} \Expect \{ \|w_{i}^{(k)} - w_i^{(k - 1)} \|^2 \mid \clF_k \}
\\
& + {2\alpha_w B_f^2 \Big( 1 - \frac{1}{n_{Y_i}} \Big) \|\theta^{(k-1)} - \tiltheta  \|^2}
\\
& + n_X \Expect \Big \{ \big  \langle \barf_i(\theta^{(k - 1)}) - \barf_i(\theta^*) , w_{i}^{(k)} - w_i^* \big \rangle \mid \clF_k \Big \}
\\
& - \big (n_X - 1 \big ) \Big \{ \big \langle \barf_i(\theta^{(k - 1)}) - \barf_i(\theta^*) , w_{i}^{(k - 1)} - w_i^* \big \rangle \Big \}
\end{aligned}
\label{e:SVRPDA_I:w_i_k_bd}
\end{equation}
Summing both sides of \eqref{e:SVRPDA_I:w_i_k_bd} over $0 \leq i \leq n_X-1$, using the fact that $\|w^{(k)} - w^*\|^2 = \sum_{i=0}^{n_X-1} \| w_i^{(k)} - w_i^* \|^2$, and then dividing by $n_X$, we get
\begin{equation}
\begin{aligned}
& \Bigg( \frac{1}{2 \alpha_w} + \gamma \Bigg) \Bigg( \Expect \{ \|w^{(k)} - w^*\|^2 \mid \clF_k \} - \frac{n_X - 1}{n_X} \| w^{(k - 1)} - w^* \|^2  \Bigg)
\\
& \leq \frac{1}{2 \alpha_w n _X} \| w^{(k - 1)} - w^* \|^2 
- \frac{1}{2 \alpha_w} \Expect \{ \|w^{(k)} - w^{(k - 1)} \|^2 \mid \clF_k \}
\\
& + {2\alpha_w B_f^2 \Big( 1 - \overline{1/n_{Y}} \Big) \|\theta^{(k-1)} - \tiltheta  \|^2}
\\
& + n_X \Expect \Big \{ L(\theta^{(k - 1)}, w^{(k)} - w^*) - L(\theta^*, w^{(k)} - w^*) \mid \clF_k \Big \}
\\
& - \big(n_X - 1) \Big \{ L(\theta^{(k - 1)}, w^{(k - 1)} - w^*) - L(\theta^*, w^{(k - 1)} - w^*) \Big \}
\end{aligned}
\label{e:SVRPDA_I:dual_bd_1}
\end{equation}
where, we have used the notation:
\begin{equation}
\overline{1/n_{Y}} \eqdef \frac{1}{n_X} \sum_{i = 0}^{n_X - 1} 1/n_{Y_i}
\end{equation}




Rearranging and combining the common terms, we obtain the final dual bound:
\begin{equation}
\begin{aligned}
& \Bigg( \frac{1}{2 \alpha_w} + \gamma \Bigg) \Expect \{ \|w^{(k)} - w^*\|^2 \mid \clF_k \} + \frac{1}{2 \alpha_w} \Expect \{ \|w^{(k)} - w^{(k - 1)} \|^2 \mid \clF_k \}
\\
& \leq \Bigg( \frac{1}{2 \alpha_w} + \frac{\gamma \big(n_X - 1 \big)}{n_X} \Bigg) \| w^{(k - 1)} - w^* \|^2 
+ {2\alpha_w B_f^2 \Big( 1 - \overline{1/n_{Y}} \Big) \|\theta^{(k-1)} - \tiltheta  \|^2}
\\
& + \Expect \Big \{ L(\theta^{(k - 1)}, w^{(k)} - w^*) - L(\theta^*, w^{(k)} - w^*) \mid \clF_k \Big \}
\\
& + \big(n_X - 1) \Expect \Big \{ L(\theta^{(k - 1)}, w^{(k)} - w^{(k - 1)}) - L(\theta^*, w^{(k)} - w^{(k - 1)}) \mid \clF_k \Big \}
\end{aligned}
\label{e:SVRPDA_I:dual_bd_2}
\end{equation}
Note that the above bound still have terms related to $L(\cdot,\cdot)$. We will combine these terms together with the $L$ terms in the primal bound, and then bound them all together thereafter.

\subsection{Primal Bound}
Now we proceed to derive the bound for the primal variable $\theta$. Specifically, we will focus on examining $\|\theta^{(k)} - \theta^*\|^2$, which can be written as
\begin{equation}
\begin{aligned}
\|\theta^{(k)} - \theta^*\|^2 
& = \|\theta^{(k)} - \theta^{(k - 1)} + \theta^{(k - 1)} - \theta^*\|^2
\\
& = \|\theta^{(k)} - \theta^{(k - 1)} \|^2 + \| 
\theta^{(k - 1)} - \theta^* \|^2 + 2 \langle (\theta^{(k)} - \theta^{(k - 1)}), (\theta^{(k - 1)} - \theta^{*}) \rangle
\end{aligned}
\label{e:SVRPDA_I:batch_dual_prox_svrg:initial_eq_primal}
\end{equation}
Similar to the dual bound, we now bound the first term and the third term together. Introduce the following function of $x$ (for fixed $\theta^{(k-1)}$ and $w^{(k)}$):
\begin{equation}
P_{\theta}(x) \eqdef \big \langle L'_\theta(\theta^{(k - 1)}, w^{(k)}) ,\, x  \big \rangle + {g(x)}
\label{e:SVRPDA_I:batch_dual_prox_svrg:Pwtheta}
\end{equation}
The first part of the function is linear in $x$ (and hence convex), and the second part of the function is $\mu$-strongly convex (Assumption~\ref{a:g_phi:sc_sm}).
Recall the primal update rule in \eqref{e:SVRPDA_I:primal_update}, which can be written in the following proximal mapping form:
\begin{align}
\theta^{(k)} 
&= 
\arg\min_{\theta} 
\Big\{ 
\langle \delta_k^\theta, \theta \rangle
+ g(\theta)
+ \frac{1}{2\alpha_\theta} \| \theta - \theta^{(k-1)} \|^2 
\Big\}
\nn\\
&=
\prox_{\alpha_\theta g}
\Big\{
\theta^{(k-1)} - \alpha_{\theta} \delta_k^{\theta}
\Big\}
\label{e:SVRPDA_I:SPDA_primal_interm1}
\end{align}
We now apply \Lemma{t:lemma_3} with $P(x) \equiv P_{\theta}(x)$ and the following identifications:
\[
f(x) = \big \langle L'_\theta(\theta^{(k - 1)}, w^{(k)}) ,\, x  \big \rangle
\quad g(x) = g(x)
\quad v = \delta_k^\theta
\quad x = \theta^{(k - 1)} 
\quad x^{(+)} = \theta^{(k)} 
\quad y = \theta^*
\quad \alpha = \alpha_\theta
\]
which leads to
\begin{equation}
\begin{aligned}
\Big \langle L'_\theta(\theta^{(k - 1)}, w^{(k)}) ,\, \theta^{*}  \Big \rangle + g(\theta^{*})
\geq 
& \Big \langle L'_\theta(\theta^{(k - 1)}, w^{(k)}) ,\, \theta^{(k)}  \Big \rangle + g(\theta^{(k)})  
\\
& + \frac{1}{\alpha_\theta} \langle (\theta^{(k - 1)} - \theta^{(k)}), (\theta^{*} - \theta^{(k - 1)}) \rangle 
\\
& - {\langle ( \delta_k^\theta- L'_\theta(\theta^{(k - 1)}, w^{(k)}) ),  (\theta^{(k)} - \theta^*) \rangle }
\\
& + \frac{1}{\alpha_\theta} \| \theta^{(k - 1)} - \theta^{(k)} \|^2 
+ \frac{\mu}{2} \| \theta^{*} - \theta^{(k)} \|^2
\end{aligned}
\end{equation}
Rearranging the terms in the above inequality, we obtain
\begin{align}
&\| \theta^{(k-1)} - \theta^{(k)} \|^2
+
\langle
\theta^{(k)} - \theta^{(k-1)},
\theta^{(k-1)} - \theta^*
\rangle
\nn\\
\le&
\alpha_\theta\langle
L'_{\theta}(\theta^{(k-1)}, w^{(k)}), \theta^*
\rangle
+
\alpha_\theta g(\theta^*)
- \alpha_\theta g(\theta^{(k)})
-
\alpha_\theta \langle L'_\theta(\theta^{(k-1)}, w^{(k)}), \theta^{(k)}\rangle 
\nn\\
&
+
\alpha_\theta
\langle
\delta_k^\theta- L'(\theta^{(k-1)}, w^{(k)}), \theta^{(k)}-\theta^*\rangle
-
\frac{\alpha_\theta \mu}{2}
\| \theta^* - \theta^{(k)} \|^2
\label{e:SVRPDA_I:batch_dual_prox_svrg:lemma3_primal}
\end{align}
Using \eqref{e:SVRPDA_I:batch_dual_prox_svrg:lemma3_primal} to bound the first and the third term in \eqref{e:SVRPDA_I:batch_dual_prox_svrg:initial_eq_primal}, we obtain:
\begin{equation}
\begin{aligned}
\hspace{-0.2in} \|\theta^{(k)} - \theta^*\|^2 
& = \|\theta^{(k)} - \theta^{(k - 1)} \|^2 + \| 
\theta^{(k - 1)} - \theta^* \|^2 + 2 \langle (\theta^{(k)} - \theta^{(k - 1)}), (\theta^{(k - 1)} - \theta^{*}) \rangle
\\
& \overset{(a)}{=} \|\theta^{(k)} - \theta^{(k - 1)} \|^2 - \| 
\theta^{(k - 1)} - \theta^* \|^2 + 2 \| 
\theta^{(k - 1)} - \theta^* \|^2 
\\
& + 2 \langle (\theta^{(k)} - \theta^{(k - 1)}), (\theta^{(k - 1)} - \theta^{*}) \rangle
\\
& \leq \| \theta^{(k - 1)} - \theta^* \|^2 
- \|\theta^{(k)} - \theta^{(k - 1)} \|^2 
- \alpha_\theta \mu \|\theta^{(k)} - \theta^*\|^2 
\\
& + 2 \alpha_\theta \Big[ \Big \langle L'_\theta(\theta^{(k - 1)}, w^{(k)}) ,\, \theta^{*}  \Big \rangle + g(\theta^{*}) - \Big \langle L'_\theta(\theta^{(k - 1)}, w^{(k)}) ,\, \theta^{(k)}  \Big \rangle - g(\theta^{(k)}) \Big]
\\
& + {2 \alpha_\theta \langle ( \delta_k^\theta- L'_\theta(\theta^{(k - 1)}, w^{(k)}) ),  (\theta^{(k)} - \theta^*) \rangle }
\end{aligned}
\label{e:SVRPDA_I:batch_dual_prox_svrg:initial_bd_primal}
\end{equation}
where step (a) subtracts and adds the second term.
Furthermore, note that $\theta^*$ is the optimal solution to the following optimization problem:
\begin{equation*}
\theta^* = \argmin_\theta \Big[ L(\theta, w^*) + g(\theta) \Big]
\end{equation*}
which implies (from the fact that $g$ is $\mu$-strongly convex):
\begin{equation}
L(\theta^{(k)} , w^*) + g(\theta^{(k)} ) \geq L(\theta^*, w^*) + g(\theta^*) + \frac{\mu}{2} \|\theta^{(k)} - \theta^* \|^2
\label{e:SVRPDA_I:batch_dual_prox_svrg:primal_inequality_2}
\end{equation}
Multiplying both sides of the above inequality by $2\alpha_\theta$ and then adding it to \eqref{e:SVRPDA_I:batch_dual_prox_svrg:initial_bd_primal}, we obtain:
\begin{equation}
\begin{aligned}
(1 + 2 \alpha_\theta \mu)\|\theta^{(k)} - \theta^*\|^2
& \leq \| \theta^{(k - 1)} - \theta^* \|^2 
- \|\theta^{(k)} - \theta^{(k - 1)} \|^2 
\\
& + 2 \alpha_\theta \Big[ L(\theta^{(k)} , w^*) - L(\theta^{*} , w^*) \Big ]
\\
& + 2 \alpha_\theta \Big[ \Big \langle L'_\theta(\theta^{(k - 1)}, w^{(k)}) ,\, \theta^{*}  \Big \rangle - \Big \langle L'_\theta(\theta^{(k - 1)}, w^{(k)}) ,\, \theta^{(k)} \Big \rangle \Big]
\\
& + {2 \alpha_\theta \langle ( \delta_k^\theta- L'_\theta(\theta^{(k - 1)}, w^{(k)}) ),  (\theta^{(k)} - \theta^*) \rangle }
\end{aligned}
\label{e:SVRPDA_I:final_bd_primal}
\end{equation}

Next, we bound the last term in \eqref{e:SVRPDA_I:final_bd_primal}. To this end, we first introduce the following auxiliary variable $\bartheta^{(k)}$, which is the updated primal variable if \emph{full batch gradient} were used: 
\begin{align}
\bartheta^{(k)} 
&= 
\argmin_\theta \Big [ \Big \langle L'_\theta(\theta^{(k - 1)}, w^{(k)}) ,\, \theta  \Big \rangle + {g(\theta)} + \frac{1}{2 \alpha_\theta} \|\theta - \theta^{(k - 1)} \|^2 \Big ]
\nn\\
&=
\prox_{\alpha_\theta g}
\Big\{
\theta^{(k-1)} - \alpha_\theta L'_\theta(\theta^{(k-1)}, w^{(k)})
\Big\}
\label{e:SVRPDA_I:PDA_primal}
\end{align}
Note that both \eqref{e:SVRPDA_I:SPDA_primal_interm1} and \eqref{e:SVRPDA_I:PDA_primal} are written in proximal mapping form. We now bound the last term  \eqref{e:SVRPDA_I:final_bd_primal}:
\begin{align}
&\langle ( \delta_k^\theta- L'_\theta(\theta^{(k - 1)}, w^{(k)}) ),  (\theta^{(k)} - \theta^*) \rangle 
\nn\\
\overset{(a)}{=} & \langle ( \delta_k^\theta- L'_\theta(\theta^{(k - 1)}, w^{(k)}) ),  (\theta^{(k)} - \bartheta^{(k)}) \rangle
+ \langle ( \delta_k^\theta- L'_\theta(\theta^{(k - 1)}, w^{(k)}) ),  (\bartheta^{(k)} - \theta^*) \rangle
\nn\\
\overset{(b)}{\leq} & \| \delta_k^\theta- L'_\theta(\theta^{(k - 1)}, w^{(k)}) \| \cdot \| \theta^{(k)} - \bartheta^{(k)} \|
+ \langle ( \delta_k^\theta- L'_\theta(\theta^{(k - 1)}, w^{(k)}) ),  (\bartheta^{(k)} - \theta^*) \rangle
\nn\\
\overset{(c)}{\leq} & \alpha_\theta \| \delta_k^\theta- L'_\theta(\theta^{(k - 1)}, w^{(k)}) \|^2
+ \langle ( \delta_k^\theta- L'_\theta(\theta^{(k - 1)}, w^{(k)}) ),  (\bartheta^{(k)} - \theta^*) \rangle
\label{e:SVRPDA_I:gradient_error_term}
\end{align}
where step (a) adds and subtracts $\bartheta^{(k)}$, step (b) uses Cauchy-Schwartz inequality, and step (c) substitutes \eqref{e:SVRPDA_I:SPDA_primal_interm1} and \eqref{e:SVRPDA_I:PDA_primal} and then applies \Lemma{t:SVRPDA_I:lemma2}. Let $\clF_k^{(+)}$ denote the filtration of all events up to and including the dual update in the $k$-th iteration. 
Applying expectation to both sides of \eqref{e:SVRPDA_I:gradient_error_term} conditioned on $\clF_k^{(+)}$, we have:
\begin{align}
&\Expect \Big \{ \langle ( \delta_k^\theta- L'_\theta(\theta^{(k - 1)}, w^{(k)}) ),  (\theta^{(k)} - \theta^*) \rangle \big | \clF_k^{(+)} \Big \}
\nn\\
\leq&
\alpha_\theta \Expect \Big \{ \| \delta_k^\theta- L'_\theta(\theta^{(k - 1)}, w^{(k)}) \|^2 \big | \clF_k^{(+)} \Big \}
+
\Expect \Big\{
\langle
\delta_k^\theta
-
L'_\theta(\theta^{(k-1)}, w^{(k)}), \bartheta^{(k)} - \theta^*
\rangle
\Big|
\clF_k^{(+)}
\Big\}
\nn\\
\overset{(a)}{=}&
\alpha_\theta \Expect \Big \{ \| \delta_k^\theta- L'_\theta(\theta^{(k - 1)}, w^{(k)}) \|^2 \big | \clF_k^{(+)} \Big \}
+
\Big \langle
\Expect \big[ \delta_k^\theta
\big|
\clF_k^{(+)}
\big] 
-
L'_\theta(\theta^{(k-1)}, w^{(k)}), \bartheta^{(k)} - \theta^*
\Big \rangle
\nn\\
\overset{(b)}{=}&
\alpha_\theta \Expect \Big \{ \| \delta_k^\theta- L'_\theta(\theta^{(k - 1)}, w^{(k)}) \|^2 \big | \clF_k^{(+)} \Big \}
\label{e:SVRPDA_I:gradient_error_term_final_bd}
\end{align}
where step $(a)$ uses the fact that $\bartheta^{(k)}$, $\theta^{(k-1)}$ and $w^{(k)}$ are deterministic conditioned on $\clF_k^{(+)}$, and step $(b)$ uses the fact that the conditional expectation of $\delta_k^\theta$ is the batch gradient. 

We will now upper bound the right hand side of \eqref{e:SVRPDA_I:gradient_error_term_final_bd}. To this end, we have:
\begin{equation}
\begin{aligned}
& \Expect \Big \{ \| \delta_k^\theta - L'_\theta(\theta^{(k - 1)}, w^{(k)}) \|^2 \big | \clF_k^{(+)} \Big \} 
\\
\overset{(a)}{=} & \Expect \Big \{ \| f'_{i_kj_k} \big(\theta^{(k - 1)}\big) w^{(k)}_{i_k} - f'_{i_kj_k} \big( \tiltheta \big) w^{(k)}_{i_k} + L'_\theta(\tiltheta, w^{(k)}) - L'_\theta(\theta^{(k - 1)}, w^{(k)}) \|^2 \big | \clF_k^{(+)} \Big \} 
\\
\overset{(b)}{\leq} & \Expect \Big \{ \| f'_{i_kj_k} \big(\theta^{(k - 1)}\big) w^{(k)}_{i_k} - f'_{i_kj_k} \big( \tiltheta \big) w^{(k)}_{i_k} \|^2 \big | \clF_k^{(+)} \Big \} 
\\
\overset{(c)}{\leq} & B_w^2 \Expect \Big \{ \| f'_{i_kj_k} \big(\theta^{(k - 1)}\big) - f'_{i_kj_k} \big( \tiltheta \big) \|^2 \big | \clF_k^{(+)} \Big \}
\\
\overset{(d)}{\leq} & B_w^2 B_\theta^2 \| \theta^{(k - 1)} -  \tiltheta \|^2  
\label{e:SVRPDA_I:variance_of_gradient}
\end{aligned}
\end{equation}
where step $(a)$ uses the definition of  $\delta_k^\theta$, step $(b)$ uses $\Expect [X - \Expect[X] ]^2 \leq \Expect[X^2]$, step $(c)$ uses Lemma \ref{lemma:conjugate_Lipschitz}, and step $(d)$ uses the Lipschitz continuity of the gradients (Assumption~\ref{a:f:smooth_bg}). Substituting \eqref{e:SVRPDA_I:variance_of_gradient} into \eqref{e:SVRPDA_I:gradient_error_term_final_bd}, we get:
\begin{align}
&\Expect\Big\{
\langle ( \delta_{i_k' j_k'}^\theta - L'_\theta(\theta^{(k - 1)}, w^{(k)}) ),  (\theta^{(k)} - \theta^*) \rangle \Big | \clF_k^{(+)} \Big\}
\le
\alpha_\theta B_w^2 B_\theta^2 \| \theta^{(k - 1)} -  \tiltheta \|^2 
\label{e:SVRPDA_I:variance_of_gradient_final}
\end{align}

Finally, substituting \eqref{e:SVRPDA_I:variance_of_gradient_final} into \eqref{e:SVRPDA_I:final_bd_primal} and then further applying expectation conditioned on $\clF_k$, we obtain:
\begin{equation}
\begin{aligned}
(1 + 2 \alpha_\theta \mu) \Expect \Big \{ \|\theta^{(k)} - \theta^*\|^2 \big | \clF_k \Big \}
& \leq \| \theta^{(k - 1)} - \theta^* \|^2 
- \Expect \Big \{ \|\theta^{(k)} - \theta^{(k - 1)} \|^2 \big | \clF_k \Big \}
\\
& + 2 \alpha_\theta \Expect \Big \{ \Big[ L(\theta^{(k)} , w^*) - L(\theta^{*} , w^*) \Big ] \big | \clF_k \Big \}
\\
& + 2 \alpha_\theta \Expect \Big \{ \Big[ \Big \langle L'_\theta(\theta^{(k - 1)}, w^{(k)}) ,\, \theta^{*}  \Big \rangle - \Big \langle L'_\theta(\theta^{(k - 1)}, w^{(k)}) ,\, \theta^{(k)} \Big \rangle \Big] \big | \clF_k \Big \}
\\
& + {2 \alpha_\theta^2 B_w^2 B_\theta^2 \| \theta^{(k - 1)} -  \tiltheta \|^2}
\end{aligned}
\label{e:SVRPDA_I:final_bd_primal_final_0}
\end{equation}
Dividing both sides by $2 \alpha_\theta$ and combining common terms, we obtain the final bound for the primal variable:
\begin{equation}
\begin{aligned}
& \bigg (\frac{1}{2 \alpha_\theta} + \mu \bigg ) \Expect \Big \{ \|\theta^{(k)} - \theta^*\|^2 \big | \clF_k \Big \}
+ \frac{1}{2 \alpha_\theta} \Expect \Big \{ \|\theta^{(k)} - \theta^{(k - 1)} \|^2 \big | \clF_k \Big \}
\\
\leq& \frac{1}{2 \alpha_\theta} \| \theta^{(k - 1)} - \theta^* \|^2 
{+ \alpha_\theta B_w^2 B_\theta^2 \| \theta^{(k - 1)} -  \tiltheta \|^2}
+ \Expect \Big \{ \Big[ L(\theta^{(k)} , w^*) - L(\theta^{*} , w^*) \Big ] \big | \clF_k \Big \}
\\
& 
+  \Expect \Big \{ \Big[ \Big \langle L'_\theta(\theta^{(k - 1)}, w^{(k)}) ,\, \theta^{*}  \Big \rangle - \Big \langle L'_\theta(\theta^{(k - 1)}, w^{(k)}) ,\, \theta^{(k)} \Big \rangle \Big] \big | \clF_k \Big \}
\end{aligned}
\label{e:SVRPDA_I:final_bd_primal_final}
\end{equation}
\subsection{Convergence for Option I}

Based on the derived primal and dual bounds above, we now proceed to prove the convergence of \methodexact~with Option I: updating $\tilde{\theta}$ using the most recent $\theta^{(k)}$ (see Algorithm \ref{alg:svrpda1}).

Adding \eqref{e:SVRPDA_I:dual_bd_2} and \eqref{e:SVRPDA_I:final_bd_primal_final} we obtain the total bound for the primal and dual variable updates:
\begin{equation}
\begin{aligned}
& \Bigg( \frac{1}{2 \alpha_w} + \gamma \Bigg) \Expect \{ \|w^{(k)} - w^*\|^2 \mid \clF_k \} + \frac{1}{2 \alpha_w} \Expect \{ \|w^{(k)} - w^{(k - 1)} \|^2 \mid \clF_k \}
\\
& \Bigg (\frac{1}{2 \alpha_\theta} + \mu \Bigg ) \Expect \Big \{ \|\theta^{(k)} - \theta^*\|^2 \big | \clF_k \Big \}
+ \frac{1}{2 \alpha_\theta} \Expect \Big \{ \|\theta^{(k)} - \theta^{(k - 1)} \|^2 \big | \clF_k \Big \}
\\
& \leq \Bigg( \frac{1}{2 \alpha_w} + \frac{\gamma \big(n_X - 1 \big)}{n_X} \Bigg) \| w^{(k - 1)} - w^* \|^2 
+ \frac{1}{2 \alpha_\theta} \| \theta^{(k - 1)} - \theta^* \|^2 \\
& + \Big( 2\alpha_w B_f^2 \Big( 1 - \overline{1/n_{Y}} \Big) + \alpha_\theta B_w^2 B_\theta^2 \Big) \| \theta^{(k - 1)} -  \tiltheta \|^2
\\
& + \Expect \Big \{ L(\theta^{(k - 1)}, w^{(k)} - w^*) - L(\theta^*, w^{(k)} - w^*) \mid \clF_k \Big \}
\\
& + \Expect \Big \{ \Big[ L(\theta^{(k)} , w^*) - L(\theta^{*} , w^*) \Big ] \big | \clF_k \Big \}
+  \Expect \Big \{ \Big[ \Big \langle L'_\theta(\theta^{(k - 1)}, w^{(k)}) ,\, \theta^{*}  \Big \rangle - \Big \langle L'_\theta(\theta^{(k - 1)}, w^{(k)}) ,\, \theta^{(k)} \Big \rangle \Big] \big | \clF_k \Big \}
\\
& + \big(n_X - 1) \Expect \Big \{ L(\theta^{(k - 1)}, w^{(k)} - w^{(k - 1)}) - L(\theta^*, w^{(k)} - w^{(k - 1)}) \mid \clF_k \Big \}
\end{aligned}
\label{e:SVRPDA_I:total_bound_1}
\end{equation}

Next, we need to upper bound the $L$ terms on the right-hand side of the above inequality. To this end, we first show the following inequality:
    \begin{align}
    \label{e:svrpda1:option1:Lterms1}
        &-L(\theta^*, w^{(k)}-w^*) + L(\theta^{(k)},w^*) - L(\theta^*,w^*)
        +
        \big\langle L_\theta'(\theta^{(k-1)}, w^{(k)}), \theta^* \big\rangle
        \nn\\
        &
        -
        \big\langle L_\theta'(\theta^{(k-1)}, w^{(k)}), \theta^{(k)} \big\rangle
        +
        L(\theta^{(k)}, w^{(k)} - w^*)
        \nn\\
                &=
                        -L(\theta^*, w^{(k)}) + L(\theta^{(k)}, w^{(k)})
                        +
                        \big\langle L_\theta'(\theta^{(k-1)}, w^{(k)}), \theta^* - \theta^{(k)}
                        \big\rangle
                        \big\rangle
                        \nn\\
                &\overset{(a)}{\le}
                        -
                        \big\langle
                            L_{\theta}'(\theta^{(k)}, w^{(k)}), \theta^* - \theta^{(k)}
                        \big\rangle
                        +
                        \big\langle L_\theta'(\theta^{(k-1)}, w^{(k)}), \theta^* - \theta^{(k)}
                        \big\rangle
                        \nn\\
                &=
                        \big\langle
                            L_{\theta}'(\theta^{(k)}, w^{(k)})
                            -
                            L_\theta'(\theta^{(k-1)}, w^{(k)})
                            , 
                            \theta^{(k)} - \theta^* 
                        \big\rangle
                        \nn\\
                &\overset{(b)}{=}
                        \frac{1}{n_X} \sum_{i=0}^{n_X-1}
                        \Big\langle
                            \Big(
                                \barf_i'(\theta^{(k)})
                                -
                                \barf_i'(\theta^{(k-1)}
                            \Big) w_i^{(k)}
                            ,
                            \theta^{(k)} - \theta^*
                        \Big\rangle
                        \nn\\
                &\le
                        \bigg|
                            \frac{1}{n_X} \sum_{i=0}^{n_X-1}
                            \Big\langle
                                \Big(
                                    \barf_i'(\theta^{(k)})
                                    -
                                    \barf_i'(\theta^{(k-1)}
                                \Big) w_i^{(k)}
                                ,
                                \theta^{(k)} - \theta^*
                            \Big\rangle
                        \bigg|
                        \nn\\
                &\overset{(c)}{\le}
                        \frac{1}{n_X} \sum_{i=0}^{n_X-1}
                        \bigg|
                            \Big\langle
                                \Big(
                                    \barf_i'(\theta^{(k)})
                                    -
                                    \barf_i'(\theta^{(k-1)}
                                \Big) w_i^{(k)}
                                ,
                                \theta^{(k)} - \theta^*
                            \Big\rangle
                        \bigg|
                        \nn\\
                &\overset{(d)}{\le}
                        \frac{1}{n_X} \sum_{i=0}^{n_X-1}
                        \Big\|
                            \Big(
                                \barf_i'(\theta^{(k)})
                                -
                                \barf_i'(\theta^{(k-1)}
                            \Big) w_i^{(k)}
                        \Big\|
                        \cdot
                        \|
                            \theta^{(k)} - \theta^*
                        \|
                        \nn\\
                &\le
                        \frac{1}{n_X} \sum_{i=0}^{n_X-1}
                        \big\| 
                            \barf_i'(\theta^{(k)})
                            -
                            \barf_i'(\theta^{(k-1)}
                        \big\|
                        \cdot
                        \|
                            w_i^{(k)}
                        \|
                        \cdot
                        \|
                            \theta^{(k)} - \theta^*
                        \|
                        \nn\\
                &\overset{(e)}{=}
                        \frac{1}{n_X} \sum_{i=0}^{n_X-1}
                        \bigg\| 
                            \frac{1}{n_{Y_i}}
                            \sum_{j=0}^{n_{Y_i}-1}
                            \Big(
                                f_{\theta^{(k)}}'(x_i,y_{ij})
                                -
                                f_{\theta^{(k-1)}}'
                                (x_i,y_{ij})
                            \Big)
                        \bigg\|
                        \cdot
                        \|
                            w_i^{(k)}
                        \|
                        \cdot
                        \|
                            \theta^{(k)} - \theta^*
                        \|
                        \nn\\
                &\overset{(f)}{\le}
                        \frac{1}{n_X} \sum_{i=0}^{n_X-1}
                        \frac{1}{n_{Y_i}}
                        \sum_{j=0}^{n_{Y_i}-1}
                        \bigg\|
                            f_{\theta^{(k)}}'(x_i,y_{ij})
                            -
                            f_{\theta^{(k-1)}}'
                            (x_i,y_{ij})
                        \bigg\|
                        \cdot
                        \|
                            w_i^{(k)}
                        \|
                        \cdot
                        \|
                            \theta^{(k)} - \theta^*
                        \|
                        \nn\\
                &\overset{(g)}{\le}
                        \frac{1}{n_X} \sum_{i=0}^{n_X-1}
                        \frac{1}{n_{Y_i}}
                        \sum_{j=0}^{n_{Y_i}-1}
                        B_\theta B_w
                        \| 
                            \theta^{(k)} - \theta^{(k-1)}
                        \|
                        \cdot
                        \|
                            \theta^{(k)} - \theta^*
                        \|
                        \nn\\
                &=
                        B_\theta B_w
                        \| 
                            \theta^{(k)} - \theta^{(k-1)}
                        \|
                        \cdot
                        \|
                            \theta^{(k)} - \theta^*
                        \|
                        \nn\\
                &\overset{(h)}{\le}
                        \frac{B_\theta B_w}{\beta_0}
                        \| 
                            \theta^{(k)} - \theta^{(k-1)}
                        \|^2
                        +
                        \beta_0 B_\theta B_w
                        \|
                            \theta^{(k)} - \theta^*
                        \|^2
    \end{align}
where step (a) uses the fact that $L(\theta,w)$ is convex with respect to the $\theta$, step (b) substitutes the expression of $L_\theta'$, step (c) uses Jensen's inequality, step (d) uses Cauchy-Schwartz inequality, step (e) substitutes the expression for $\barf_i'$, step (f) uses Jensen's inequality, step (g) uses the Lipschitz gradient property of $f_\theta'$ and Lemma \ref{lemma:conjugate_Lipschitz}, step (h) uses $ab \le \frac{1}{\beta_0} a^2 + \beta_0 b^2$. In consequence, the above inequality implies that
    \begin{align}
        &-L(\theta^*, w^{(k)}-w^*) + L(\theta^{(k)},w^*) - L(\theta^*,w^*)
        +
        \big\langle L_\theta'(\theta^{(k-1)}, w^{(k)}), \theta^* \big\rangle
        -
        \big\langle L_\theta'(\theta^{(k-1)}, w^{(k)}), \theta^{(k)} \big\rangle
        \nn\\
                \le&
                        -L(\theta^{(k)}, w^{(k)} - w^*)
                        +
                        \frac{B_\theta B_w}{\beta_0}
                        \| 
                            \theta^{(k)} - \theta^{(k-1)}
                        \|^2
                        +
                        \beta_0 B_\theta B_w
                        \|
                            \theta^{(k)} - \theta^*
                        \|^2
        \label{e:svrpda1:L_partial_bound}
    \end{align}
Using \eqref{e:svrpda1:L_partial_bound}, the $L$ terms in  \eqref{e:SVRPDA_I:total_bound_1} becomes (notice that we keep the first and last $L$ terms in \eqref{e:SVRPDA_I:total_bound_1} intact)
    \begin{align}
        &L(\theta^{(k - 1)}, w^{(k)} - w^*) 
        - 
        L(\theta^*, w^{(k)} - w^*)
        +
        L(\theta^{(k)} , w^*) - L(\theta^{*} , w^*)
        +  
        \Big \langle 
            L'_\theta(\theta^{(k - 1)}, w^{(k)}) ,\, \theta^{*}  
        \Big \rangle 
        \nn\\
        &
        - 
        \Big \langle 
            L'_\theta(\theta^{(k - 1)}, w^{(k)}),\, \theta^{(k)} 
        \Big \rangle
        + 
        \big(n_X - 1) 
        \Big[
            L(\theta^{(k - 1)}, w^{(k)} - w^{(k - 1)}) 
            - 
            L(\theta^*, w^{(k)} - w^{(k - 1)})
        \Big]
        \nn\\
                \overset{(a)}{\le}&
                        L(\theta^{(k-1)}, w^{(k)}-w^*)
                        -
                        L(\theta^{(k)}, w^{(k)} - w^*)
                        +
                        (n_X-1)
                        \Big[
                            L(\theta^{(k-1)},
                            w^{(k)}-w^{(k-1)})
                            -
                            L(\theta^*, w^{(k)}-w^{(k-1)})
                        \Big]
                        \nn\\
                        &
                        +
                        \frac{B_\theta B_w}{\beta_0}
                        \| 
                            \theta^{(k)} - \theta^{(k-1)}
                        \|^2
                        +
                        \beta_0 B_\theta B_w
                        \|
                            \theta^{(k)} - \theta^*
                        \|^2
                        \nn\\
                =&
                        L\Big(\theta^{(k-1)}, w^{(k)} - w^* + (n_X-1)(w^{(k)}-w^{(k-1)}) \Big)
                        -
                        L\Big(\theta^{(k)}, w^{(k)}-w^* + (n_X-1)(w^{(k)} - w^{(k-1)}) \Big)
                        \nn\\
                        &+
                        \frac{B_\theta B_w}{\beta_0}
                        \| 
                            \theta^{(k)} - \theta^{(k-1)}
                        \|^2
                        +
                        \beta_0 B_\theta B_w
                        \|
                            \theta^{(k)} - \theta^*
                        \|^2
                        \nn\\
                \overset{(b)}{\le}&
                        \Big\langle
                            L'_{\theta}
                            \Big(\theta^{(k-1)}, w^{(k)} - w^* + (n_X-1)(w^{(k)}-w^{(k-1)}) \Big),
                            \theta^{(k-1)} - \theta^{(k)}
                        \Big\rangle
                        \nn\\
                        &+
                        \frac{B_\theta B_w}{\beta_0}
                        \| 
                            \theta^{(k)} - \theta^{(k-1)}
                        \|^2
                        +
                        \beta_0 B_\theta B_w
                        \|
                            \theta^{(k)} - \theta^*
                        \|^2
                        \nn\\
                =&
                        \Big\langle
                            \frac{1}{n_X}
                            \sum_{i=0}^{n_X-1}
                            \barf'_i(\theta^{(k-1)})
                            \Big(w_i^{(k)} - w_i^* + (n_X-1)(w_i^{(k)}-w_i^{(k-1)}) \Big)
                            ,
                            \theta^{(k-1)} - \theta^{(k)}
                        \Big\rangle
                        \nn\\
                        &+
                        \frac{B_\theta B_w}{\beta_0}
                        \| 
                            \theta^{(k)} - \theta^{(k-1)}
                        \|^2
                        +
                        \beta_0 B_\theta B_w
                        \|
                            \theta^{(k)} - \theta^*
                        \|^2
                        \nn\\
                =&
                        \Big\langle
                            \frac{1}{n_X}
                            \sum_{i=0}^{n_X-1}
                            \barf'_i(\theta^{(k-1)})
                            \Big(w_i^{(k-1)} + n_X(w_i^{(k)}-w_i^{(k-1)}) - w_i^* \Big)
                            ,
                            \theta^{(k-1)} - \theta^{(k)}
                        \Big\rangle
                        \nn\\
                        &+
                        \frac{B_\theta B_w}{\beta_0}
                        \| 
                            \theta^{(k)} - \theta^{(k-1)}
                        \|^2
                        +
                        \beta_0 B_\theta B_w
                        \|
                            \theta^{(k)} - \theta^*
                        \|^2
                        \nn\\
                \overset{(c)}{\le}&
                        \beta_1
                        \Big\|
                            \frac{1}{n_X}
                            \sum_{i=0}^{n_X-1}
                            \barf'_i(\theta^{(k-1)})
                            \Big(w_i^{(k-1)} + n_X(w_i^{(k)}-w_i^{(k-1)}) - w_i^* \Big)
                        \Big\|^2
                        +
                        \frac{1}{\beta_1}
                        \big\|
                            \theta^{(k-1)} - \theta^{(k)}
                        \big\|^2
                        \nn\\
                        &+
                        \frac{B_\theta B_w}{\beta_0}
                        \| 
                            \theta^{(k)} - \theta^{(k-1)}
                        \|^2
                        +
                        \beta_0 B_\theta B_w
                        \|
                            \theta^{(k)} - \theta^*
                        \|^2
                        \nn\\
                =&
                        \beta_1
                        \Big\|
                            \frac{1}{n_X}
                            \sum_{i=0}^{n_X-1}
                            \barf'_i(\theta^{(k-1)})
                            \Big(w_i^{(k-1)} - w_i^*\Big) 
                            + 
                            \sum_{i=0}^{n_X-1}
                            \barf'_i(\theta^{(k-1)})
                            (w_i^{(k)}-w_i^{(k-1)})
                        \Big\|^2
                        \nn\\
                        &+
                        \frac{1}{\beta_1}
                        \big\|
                            \theta^{(k-1)} - \theta^{(k)}
                        \big\|^2
                        +
                        \frac{B_\theta B_w}{\beta_0}
                        \| 
                            \theta^{(k)} - \theta^{(k-1)}
                        \|^2
                        +
                        \beta_0 B_\theta B_w
                        \|
                            \theta^{(k)} - \theta^*
                        \|^2
                        \nn\\
                \overset{(d)}{\le}&
                        2\beta_1
                        \Big\|
                            \frac{1}{n_X}
                            \sum_{i=0}^{n_X-1}
                            \barf'_i(\theta^{(k-1)})
                            \Big(w_i^{(k-1)} - w_i^*\Big) 
                        \Big\|^2
                        + 
                        2\beta_1
                        \Big\|
                            \sum_{i=0}^{n_X-1}
                            \barf'_i(\theta^{(k-1)})
                            (w_i^{(k)}-w_i^{(k-1)})
                        \Big\|^2
                        \nn\\
                        &+
                        \frac{1}{\beta_1}
                        \big\|
                            \theta^{(k-1)} - \theta^{(k)}
                        \big\|^2
                        +
                        \frac{B_\theta B_w}{\beta_0}
                        \| 
                            \theta^{(k)} - \theta^{(k-1)}
                        \|^2
                        +
                        \beta_0 B_\theta B_w
                        \|
                            \theta^{(k)} - \theta^*
                        \|^2
                        \nn\\
                \overset{(e)}{\le}&
                        \frac{2\beta_1 B_f^2}{n_X}
                        \sum_{i=0}^{n_X-1}
                        \| w_i^{(k-1)} - w_i^* \|^2
                        + 
                        2\beta_1
                        \Big\|
                            \sum_{i=0}^{n_X-1}
                            \barf'_i(\theta^{(k-1)})
                            (w_i^{(k)}-w_i^{(k-1)})
                        \Big\|^2
                        \nn\\
                        &
                        +
                        \Big(
                            \frac{1}{\beta_1}
                            +
                            \frac{B_\theta B_w}{\beta_0}
                        \Big)
                        \big\|
                            \theta^{(k-1)} - \theta^{(k)}
                        \big\|^2
                        +
                        \beta_0 B_\theta B_w
                        \|
                            \theta^{(k)} - \theta^*
                        \|^2
                        \label{e:svrpda1:option1:Lterms3}
    \end{align}
where step (a) applies \eqref{e:svrpda1:L_partial_bound}, step (b) uses convexity of $L$ in $\theta$, step (c) uses $\langle a, b\rangle \le \beta_1 \|a\|^2 + \frac{1}{\beta_1}\|b\|^2$, for some $\beta_1>0$ to be chosen later, step (d) uses $\|a+b\|^2 \le 2\|a\|^2 + 2\|b\|^2$, and step (e) applies Jensen's inequality and bounded gradient assumption to the first term. Before we proceed, we note that, by taking expectation of the second term conditioned on $\clF_k$, we get
    \begin{align}
        &\Expect\Big\{
            \Big\|
                \sum_{i=0}^{n_X-1}
                \barf'_i(\theta^{(k-1)})
                (w_i^{(k)}-w_i^{(k-1)})
            \Big\|^2
            \Big| \clF_k
        \Big\}
            \nn\\
                =&
                        \sum_{i=0}^{n_X-1}
                        \sum_{j=0}^{n_{Y_i}-1}
                        \frac{1}{n_X n_{Y_i}}
                        \Big\|
                            \barf'_i(\theta^{(k-1)})
                            (w_{ij}'-w_i^{(k-1)})
                        \Big\|^2    
                        \nn\\
                \le&
                        B_f^2
                        \sum_{i=0}^{n_X-1}
                        \sum_{j=0}^{n_{Y_i}-1}
                        \frac{1}{n_X n_{Y_i}}
                        \Big\|
                            w_{ij}'-w_i^{(k-1)}
                        \Big\|^2    
                        \nn\\
                =&
                        B_f^2
                        \sum_{i=0}^{n_X-1}
                        \Expect \big\{ \|w_i^{(k)} - w_i^{(k-1)}\|^2 \big| \clF_k \big\}
	\label{e:svrpda1:option1:Lterms4}
    \end{align}
Therefore, the conditional expectation of all the $L$ terms are bounded by
    \begin{align}
    \label{e:SVRPDA_I:option1:Lterms_final}
        &\frac{2\beta_1 B_f^2}{n_X}
        \sum_{i=0}^{n_X-1}
        \| w_i^{(k-1)} - w_i^* \|^2
        + 
        2\beta_1 B_f^2
        \sum_{i=0}^{n_X-1}
        \Expect\{ \|w_i^{(k)} - w_i^{(k-1)}\|^2 | \clF_k \}
        \nn\\
        &
        +
        \Big( 
            \frac{1}{\beta_1} + \frac{B_\theta B_w}{\beta}
        \Big)
        \Expect\big\{
        \big\|
            \theta^{(k-1)} - \theta^{(k)}
        \big\|^2
        \big| \clF_k
        \big\}
        +
        \beta_0 B_\theta B_w
        \Expect\big\{
        \|
            \theta^{(k)} - \theta^*
        \|^2 \big| \clF_k \big\}
        \nn\\
                =&
                        \frac{2\beta_1 B_f^2}{n_X}
                        \| w^{(k-1)} - w^* \|^2
                        + 
                        2\beta_1 B_f^2
                        \Expect\{ \|w^{(k)} - w^{(k-1)}\|^2 | \clF_k \}
                        \nn\\
                        &
                        +
                        \Big(
                            \frac{1}{\beta_1}
                            +
                            \frac{B_\theta B_w}{\beta_0}
                        \Big)
                        \Expect\big\{
                        \big\|
                            \theta^{(k-1)} - \theta^{(k)}
                        \big\|^2
                        \big| \clF_k
                        \big\}
                        +
                        \beta_0 B_\theta B_w
                        \Expect\big\{
                        \|
                            \theta^{(k)} - \theta^*
                        \|^2 \big| \clF_k \big\}
    \end{align}
Therefore, the total bound \eqref{e:SVRPDA_I:total_bound_1} becomes
    \begin{align}
        & \Bigg( \frac{1}{2 \alpha_w} + \gamma \Bigg) \Expect \{ \|w^{(k)} - w^*\|^2 \mid \clF_k \} + \frac{1}{2 \alpha_w} \Expect \{ \|w^{(k)} - w^{(k - 1)} \|^2 \mid \clF_k \}
        \nn\\
        & 
        +
        \Bigg (\frac{1}{2 \alpha_\theta} + \mu \Bigg ) \Expect \Big \{ \|\theta^{(k)} - \theta^*\|^2 \big | \clF_k \Big \}
        + \frac{1}{2 \alpha_\theta} \Expect \Big \{ \|\theta^{(k)} - \theta^{(k - 1)} \|^2 \big | \clF_k \Big \}
        \nn\\
        & \leq \Bigg( \frac{1}{2 \alpha_w} + \frac{\gamma \big(n_X - 1 \big)}{n_X} \Bigg) \| w^{(k - 1)} - w^* \|^2 
        + \frac{1}{2 \alpha_\theta} \| \theta^{(k - 1)} - \theta^* \|^2 
        \nn\\
        & + \Big( 2\alpha_w B_f^2 \Big( 1 - \overline{1/n_{Y}} \Big) + \alpha_\theta B_w^2 B_\theta^2 \Big) \| \theta^{(k - 1)} -  \tiltheta \|^2
        \nn\\
        & 
        + 
                        \frac{2\beta_1 B_f^2}{n_X}
                        \| w^{(k-1)} - w^* \|^2
                        + 
                        2\beta_1 B_f^2
                        \Expect\{ \|w^{(k)} - w^{(k-1)}\|^2 | \clF_k \}
                        \nn\\
                        &
                        +
                        \Big(
                            \frac{1}{\beta_1}
                            +
                            \frac{B_\theta B_w}{\beta_0}
                        \Big)
                        \Expect\big\{
                        \big\|
                            \theta^{(k-1)} - \theta^{(k)}
                        \big\|^2
                        \big| \clF_k
                        \big\}
                        +
                        \beta_0 B_\theta B_w
                        \Expect\big\{
                        \|
                            \theta^{(k)} - \theta^*
                        \|^2 \big| \clF_k \big\}
    \end{align}
By combining the common terms, we obtain
    \begin{align}
        & 
        \Bigg (
            \frac{1}{2 \alpha_\theta} + \mu 
            -
            \beta_0 B_\theta B_w
        \Bigg ) 
        \Expect \Big \{ 
            \|\theta^{(k)} - \theta^*\|^2 \big 
            | \clF_k 
        \Big \}
        +
        \Bigg( \frac{1}{2 \alpha_w} + \gamma \Bigg) 
        \Expect \{ \|w^{(k)} - w^*\|^2 \mid \clF_k \}
        \nn\\
        & 
        + 
        \bigg(
            \frac{1}{2 \alpha_\theta} 
            -
            \frac{1}{\beta_1}
            -
            \frac{B_\theta B_w}{\beta_0}
        \bigg)
        \Expect \Big \{ 
            \|\theta^{(k)} - \theta^{(k - 1)} \|^2 
            \big | \clF_k 
        \Big \}
        +
        \Big( \frac{1}{2 \alpha_w} - 2\beta_1 B_f^2 \Big)
        \Expect \{ \|w^{(k)} - w^{(k - 1)} \|^2 \mid \clF_k \}
        \nn\\
        \leq&
                        \frac{1}{2 \alpha_\theta} \| \theta^{(k - 1)} - \theta^* \|^2 
                        +
                        \bigg( 
                            \frac{1}{2 \alpha_w} 
                            + 
                            \gamma
                            -
                            \frac{\gamma}{n_X}
                            +
                            \frac{2\beta_1 B_f^2}{n_X}
                        \bigg) 
                        \| w^{(k - 1)} - w^* \|^2 
                        \nn\\
                        & 
                        + \Big( 2\alpha_w B_f^2 \Big( 1 - \overline{1/n_{Y}} \Big) + \alpha_\theta B_w^2 B_\theta^2 \Big) \| \theta^{(k - 1)} -  \tiltheta \|^2
        \label{e:svrpda1:total_bound_final_interm1}
    \end{align}
Applying inequality $\|x + y \|^2 \le 2 \|x\|^2 + 2\|y\|^2$ to the last term in \eqref{e:svrpda1:total_bound_final_interm1}, we obtain
    \begin{align}
        & 
        \Bigg (
            \frac{1}{2 \alpha_\theta} + \mu 
            -
            \beta_0 B_\theta B_w
        \Bigg ) 
        \Expect \Big \{ 
            \|\theta^{(k)} - \theta^*\|^2 \big 
            | \clF_k 
        \Big \}
        +
        \Bigg( \frac{1}{2 \alpha_w} + \gamma \Bigg) 
        \Expect \{ \|w^{(k)} - w^*\|^2 \mid \clF_k \}
        \nn\\
        & 
        + 
        \bigg(
            \frac{1}{2 \alpha_\theta} 
            -
            \frac{1}{\beta_1}
            -
            \frac{B_\theta B_w}{\beta_0}
        \bigg)
        \Expect \Big \{ 
            \|\theta^{(k)} - \theta^{(k - 1)} \|^2 
            \big | \clF_k 
        \Big \}
        +
        \Big( \frac{1}{2 \alpha_w} - 2\beta_1 B_f^2 \Big)
        \Expect \{ \|w^{(k)} - w^{(k - 1)} \|^2 \mid \clF_k \}
        \nn\\
        \leq&
                        \bigg(
                            \frac{1}{2 \alpha_\theta} 
                            +
                            4\alpha_w B_f^2 \Big( 1 - \overline{1/n_{Y}} \Big) 
                            + 
                            2\alpha_\theta B_w^2 B_\theta^2
                        \bigg)
                        \| \theta^{(k - 1)} - \theta^* \|^2 
                        \nn\\
                        &
                        +
                        \bigg( 
                            \frac{1}{2 \alpha_w} 
                            + 
                            \gamma
                            -
                            \frac{\gamma}{n_X}
                            +
                            \frac{2\beta_1 B_f^2}{n_X}
                        \bigg) 
                        \| w^{(k - 1)} - w^* \|^2
                        + 
                        \Big( 4\alpha_w B_f^2 \Big( 1 - \overline{1/n_{Y}} \Big) 
                        + 
                        2\alpha_\theta B_w^2 B_\theta^2 \Big) 
                        \| \tiltheta - \theta^* \|^2
        \label{e:svrpda1:total_bound_final_interm2}
    \end{align}
Taking full expectation of the above inequality, we obtain:
    \begin{align}
        & 
        \bigg (
            \frac{1}{2 \alpha_\theta} + \mu 
            -
            \beta_0 B_\theta B_w
        \bigg ) 
        \Expect \|\theta^{(k)} - \theta^*\|^2
        +
        \bigg( \frac{1}{2 \alpha_w} + \gamma \bigg) 
        \Expect \|w^{(k)} - w^*\|^2
        \nn\\
        & 
        + 
        \bigg(
            \frac{1}{2 \alpha_\theta} 
            -
            \frac{1}{\beta_1}
            -
            \frac{B_\theta B_w}{\beta_0}
        \bigg)
        \Expect \|\theta^{(k)} - \theta^{(k - 1)} \|^2 
        +
        \Big( \frac{1}{2 \alpha_w} - 2\beta_1 B_f^2 \Big)
        \Expect \|w^{(k)} - w^{(k - 1)} \|^2
        \nn\\
        \leq&
                        \bigg(
                            \frac{1}{2 \alpha_\theta} 
                            +
                            4\alpha_w B_f^2 \Big( 1 - \overline{1/n_{Y}} \Big) 
                            + 
                            2\alpha_\theta B_w^2 B_\theta^2
                        \bigg)
                        \Expect \| \theta^{(k - 1)} - \theta^* \|^2 
                        \nn\\
                        &
                        +
                        \bigg( 
                            \frac{1}{2 \alpha_w} 
                            + 
                            \gamma
                            -
                            \frac{\gamma}{n_X}
                            +
                            \frac{2\beta_1 B_f^2}{n_X}
                        \bigg) 
                        \Expect \| w^{(k - 1)} - w^* \|^2
                        \nn\\
                        &
                        + 
                        \Big( 4\alpha_w B_f^2 \Big( 1 - \overline{1/n_{Y}} \Big) 
                        + 
                        2\alpha_\theta B_w^2 B_\theta^2 \Big) 
                        \Expect \| \tiltheta - \theta^* \|^2
        \label{e:svrpda1:total_bound_final_interm3}
    \end{align}
In order for the above inequality to converge, the hyperparameters need to be chosen to satisfy the following conditions:
    \begin{align}
        \frac{1}{2\alpha_\theta}
                &\ge
                        \frac{1}{\beta_1} + \frac{B_{\theta}B_w}{\beta_0}
                        \nn\\
        \alpha_w
                &\le
                        \frac{1}{4B_f^2 \beta_1}
                        \nn\\
        \beta_1
                &<
                        \frac{\gamma}{2B_f^2}
                        \nn\\
        4\alpha_w B_f^2 (1-\overline{1/n_{Y}}) + 2\alpha_\theta B_w^2 B_\theta^2
                &<
                        \mu - \beta_0 B_{\theta} B_w
        \label{e:svrpda1:param_condition_interm1}
    \end{align}
which simplifies the recursion to be
    \begin{align}
        & 
        \bigg (
            \frac{1}{2 \alpha_\theta} + \mu 
            -
            \beta_0 B_\theta B_w
        \bigg ) 
        \Expect \|\theta^{(k)} - \theta^*\|^2
        +
        \bigg( \frac{1}{2 \alpha_w} + \gamma \bigg) 
        \Expect \|w^{(k)} - w^*\|^2
        \nn\\
        \leq&
                        \bigg(
                            \frac{1}{2 \alpha_\theta} 
                            +
                            4\alpha_w B_f^2 \Big( 1 - \overline{1/n_{Y}} \Big) 
                            + 
                            2\alpha_\theta B_w^2 B_\theta^2
                        \bigg)
                        \Expect \| \theta^{(k - 1)} - \theta^* \|^2 
                        \nn\\
                        &
                        +
                        \bigg( 
                            \frac{1}{2 \alpha_w} 
                            + 
                            \gamma
                            -
                            \frac{\gamma}{n_X}
                            +
                            \frac{2\beta_1 B_f^2}{n_X}
                        \bigg) 
                        \Expect \| w^{(k - 1)} - w^* \|^2
                        \nn\\
                        &
                        + 
                        \Big( 
                            4\alpha_w B_f^2 \Big( 1 - \overline{1/n_{Y}} \Big) 
                            + 
                            2\alpha_\theta B_w^2 B_\theta^2 
                        \Big) 
                        \Expect \| \tiltheta - \theta^* \|^2
        \label{e:svrpda1:total_bound_final_interm4}
    \end{align}
Inequality \eqref{e:svrpda1:total_bound_final_interm4} can also be further written as
    \begin{align}
        & 
        \Expect \|\theta^{(k)} - \theta^*\|^2
        +
        \frac{
            \frac{1}{2 \alpha_w} + \gamma
        }
        {
            \frac{1}{2 \alpha_\theta} + \mu 
            -
            \beta_0 B_\theta B_w
        }
        \Expect \|w^{(k)} - w^*\|^2
        \nn\\
        \leq&
                        r_P
                        \Expect \| \theta^{(k - 1)} - \theta^* \|^2
                        +
                        r_D
                        \cdot
                        \frac{
                            \frac{1}{2 \alpha_w} + \gamma
                        }
                        {
                            \frac{1}{2 \alpha_\theta} + \mu 
                            -
                            \beta_0 B_\theta B_w
                        }
                        \Expect \| w^{(k - 1)} - w^* \|^2
                        \nn\\
                        &
                        + 
                        \frac{
                            4\alpha_w B_f^2 \big( 1 - \overline{1/n_{Y}} \big) 
                            + 
                            2\alpha_\theta B_w^2 B_\theta^2 
                        }
                        {
                            \frac{1}{2 \alpha_\theta} + \mu 
                            -
                            \beta_0 B_\theta B_w
                        }  
                        \Expect \| \tiltheta - \theta^* \|^2
        \label{e:svrpda1:total_bound_final_interm5}
    \end{align}
where $r_P$ and $r_D$ are the primal and the dual ratios, defined as
    \begin{align}
        r_P
                &=
                        \frac{
                            \frac{1}{2 \alpha_\theta} 
                            +
                            4\alpha_w B_f^2 \Big( 1 - \overline{1/n_{Y}} \Big) 
                            + 
                            2\alpha_\theta B_w^2 B_\theta^2
                        }
                        {
                            \frac{1}{2 \alpha_\theta} + \mu 
                            -
                            \beta_0 B_\theta B_w
                        }
                        \nn\\
        r_D
                &=
                        1 
                        - 
                        \frac{1}{n_X} 
                        \frac{2\alpha_w(\gamma - 2\beta_1 B_f^2)}{1+2\alpha_w\gamma}
                        \nn
    \end{align}

We choose $\beta_0$, $\beta_1$, and the primal and the dual step-sizes to be
    \begin{align}
        \beta_0
                &=
                        \frac{\mu}{2B_\theta B_w},
                        \quad
        \beta_1
                =
                        \frac{\gamma}{4B_f^2}
                        \nn\\
        \alpha_{\theta}
                &=
                        \frac{
                            \frac{1}{\mu}
                        }
                        {
                            64 n_X
                            \big(
                                \frac{B_f^2}{\mu\gamma}
                                +
                                \frac{B_\theta^2 B_w^2}{\mu^2}
                            \big)
                            +
                            n_X
                        }
                =
                        \frac{1}{\mu}
                        \cdot
                        \frac{1}{64 n_X \kappa + n_X}
                        \nn\\
        \alpha_w
                &=
                        \frac{
                            \frac{1}{\gamma}
                        }
                        {
                            64
                            \big(
                                \frac{B_f^2}{\mu\gamma}
                                +
                                \frac{B_\theta^2 B_w^2}{\mu^2}
                            \big)
                            + 1
                        }
                =
                        \frac{1}{\gamma}
                        \cdot
                        \frac{1}{64\kappa + 1}
                        \nn
    \end{align}
where $\kappa$ is the condition number defined as
    \begin{align}
        \kappa
                &=
                        \frac{B_f^2}{\mu\gamma}
                        +
                        \frac{B_\theta^2 B_w^2}{\mu^2}
    \end{align}
It can be verified that the above choice of step-sizes satisfies the condition \eqref{e:svrpda1:param_condition_interm1}. With our choice of the parameters, we also have
    \begin{align}
        \frac{
            \frac{1}{2 \alpha_w} + \gamma
        }
        {
            \frac{1}{2 \alpha_\theta} + \mu 
            -
            \beta_0 B_\theta B_w
        }
                &=
                        \frac{\frac{1}{2 \alpha_w} + \gamma}
                        {\frac{1}{2 \alpha_\theta} + \frac{\mu}{2}}
                =
                        \frac{\gamma}{\mu}
                        \cdot
                        \frac{64\kappa + 3}{64n_X \kappa + n_X + 1}
        \label{e:svrpda1:lyapunov_weight}
    \end{align}
and
    \begin{align}
        \frac{
            4\alpha_w B_f^2 \big( 1 - \overline{1/n_{Y}} \big) 
            + 
            2\alpha_\theta B_w^2 B_\theta^2 
        }
        {
            \frac{1}{2 \alpha_\theta} + \mu 
            -
            \beta_0 B_\theta B_w
        }  
                &=
                        \frac{
                            4\alpha_w B_f^2 \big( 1 - \overline{1/n_{Y}} \big) 
                            + 
                            2\alpha_\theta B_w^2 B_\theta^2 
                        }
                        {
                            \frac{1}{2 \alpha_\theta} + \frac{\mu}{2}
                        }  
                        \nn\\
                &=
                        \frac{
                            \frac{8 B_f^2 \big( 1 - \overline{1/n_{Y}} \big)}{\mu\gamma} 
                            + 
                            \frac{4 B_w^2 B_\theta^2}{\mu^2} \cdot \frac{1}{n_X}
                        }
                        {
                            (64\kappa + 1)(64 n_X \kappa + n_X + 1)
                        }
        \label{e:svrpda1:steady_state_mse}
    \end{align}
Substituting \eqref{e:svrpda1:lyapunov_weight}--\eqref{e:svrpda1:steady_state_mse} into \eqref{e:svrpda1:total_bound_final_interm5}, we obtain
    \begin{align}
        & 
        \Expect \|\theta^{(k)} - \theta^*\|^2
        +
        \frac{\gamma}{\mu}
        \cdot
        \frac{64\kappa + 3}{64n_X \kappa + n_X + 1}
        \Expect \|w^{(k)} - w^*\|^2
        \nn\\
        \leq&
                        r_P
                        \Expect \| \theta^{(k - 1)} - \theta^* \|^2
                        +
                        r_D
                        \cdot
                        \frac{\gamma}{\mu}
                        \cdot
                        \frac{64\kappa + 3}{64n_X \kappa + n_X + 1}
                        \Expect \| w^{(k - 1)} - w^* \|^2
                        \nn\\
                        &
                        + 
                        \frac{
                            \frac{8 B_f^2 \big( 1 - \overline{1/n_{Y}} \big)}{\mu\gamma} 
                            + 
                            \frac{4 B_w^2 B_\theta^2}{\mu^2} \cdot \frac{1}{n_X}
                        }
                        {
                            (64\kappa + 1)(64 n_X \kappa + n_X + 1)
                        }
                        \Expect \| \tiltheta - \theta^* \|^2
        \label{e:svrpda1:total_bound_final_interm6}
    \end{align}

Furthermore, the primal and the dual ratios can be upper bounded as
    \begin{align}
        r_P
                &=
                        \frac{
                            1
                            +
                            8\alpha_\theta\alpha_w B_f^2 \Big( 1 - \overline{1/n_{Y}} \Big) 
                            + 
                            4\alpha_\theta^2 B_w^2 B_\theta^2
                        }
                        {
                            1 
                            + 
                            2\alpha_\theta\mu 
                            -
                            2\alpha_\theta
                            \beta_0 B_\theta B_w
                        }
                        \nn\\
                &=
                        \frac{
                            1
                            +
                            8\alpha_\theta\alpha_w B_f^2 \Big( 1 - \overline{1/n_{Y}} \Big) 
                            + 
                            4\alpha_\theta^2 B_w^2 B_\theta^2
                        }
                        {
                            1 
                            + 
                            \alpha_\theta\mu
                        }
                        \nn\\
                &=
                        1
                        -
                        \frac{
                            \alpha_\theta \mu
                            -
                            8\alpha_\theta\alpha_w B_f^2 \Big( 1 - \overline{1/n_{Y}} \Big) 
                            - 
                            4\alpha_\theta^2 B_w^2 B_\theta^2
                        }
                        {
                            1 
                            + 
                            \alpha_\theta\mu
                        }
                        \nn\\
                &\le
                        1
                        -
                        \frac{1}{\frac{1024}{13}n_X \kappa + \frac{16}{13} n_X + \frac{16}{13}}
                        \nn\\
                &\le
                        1
                        -
                        \frac{1}{78.8n_X \kappa + 1.3 n_X + 1.3}
        \label{e:svrpda1:r_P}
                        \\
        r_D
                &=
                        1 
                        - 
                        \frac{1}{n_X} 
                        \frac{2\alpha_w(\gamma - 2\beta_1 B_f^2)}{1+2\alpha_w\gamma}
                        \nn\\
                &=
                        1 
                        - 
                        \frac{1}{n_X} 
                        \frac{\alpha_w\gamma}{1+2\alpha_w\gamma}
                        \nn\\
                &=
                        1 - \frac{1}{64n_X \kappa + 3 n_X}
                <
                        r_P
        \label{e:svrpda1:r_D}
    \end{align}
Therefore, inequality \eqref{e:svrpda1:total_bound_final_interm6} can be further upper bounded as
    \begin{align}
        & 
        \Expect \|\theta^{(k)} - \theta^*\|^2
        +
        \frac{\gamma}{\mu}
        \cdot
        \frac{64\kappa + 3}{64n_X \kappa + n_X + 1}
        \Expect \|w^{(k)} - w^*\|^2
        \nn\\
        \leq&
                        r_P
                        \Big(
                            \Expect \| \theta^{(k - 1)} - \theta^* \|^2
                            +
                            \frac{\gamma}{\mu}
                            \cdot
                            \frac{64\kappa + 3}{64n_X \kappa + n_X + 1}
                            \Expect \| w^{(k - 1)} - w^* \|^2
                        \Big)
                        \nn\\
                        &
                        + 
                        \frac{
                            \frac{8 B_f^2 \big( 1 - \overline{1/n_{Y}} \big)}{\mu\gamma} 
                            + 
                            \frac{4 B_w^2 B_\theta^2}{\mu^2} \cdot \frac{1}{n_X}
                        }
                        {
                            (64\kappa + 1)(64 n_X \kappa + n_X + 1)
                        }
                        \Expect \| \tiltheta - \theta^* \|^2
                        \nn\\
                \le&    
                        r_P
                        \Big(
                            \Expect \| \theta^{(k - 1)} - \theta^* \|^2
                            +
                            \frac{\gamma}{\mu}
                            \cdot
                            \frac{64\kappa + 3}{64n_X \kappa + n_X + 1}
                            \Expect \| w^{(k - 1)} - w^* \|^2
                        \Big)
                        \nn\\
                        &
                        + 
                        \frac{
                            \frac{8 B_f^2 \big( 1 - \overline{1/n_{Y}} \big)}{\mu\gamma} 
                            + 
                            \frac{4 B_w^2 B_\theta^2}{\mu^2} \cdot \frac{1}{n_X}
                        }
                        {
                            (64\kappa + 1)(64 n_X \kappa + n_X + 1)
                        }
                        \Big(
                            \Expect \| \tiltheta - \theta^* \|^2
                            +
                            \frac{\gamma}{\mu}
                            \cdot
                            \frac{64\kappa + 3}{64n_X \kappa + n_X + 1}
                            \Expect \| \tilde{w} - w^* \|^2
                        \Big)
                        \nn\\
                \overset{(a)}{=}&
                        r_P
                        \Big(
                            \Expect \| \theta^{(k - 1)} - \theta^* \|^2
                            +
                            \frac{\gamma}{\mu}
                            \cdot
                            \frac{64\kappa + 3}{64n_X \kappa + n_X + 1}
                            \Expect \| w^{(k - 1)} - w^* \|^2
                        \Big)
                        \nn\\
                        &
                        + 
                        \frac{
                            \frac{8 B_f^2 \big( 1 - \overline{1/n_{Y}} \big)}{\mu\gamma} 
                            + 
                            \frac{4 B_w^2 B_\theta^2}{\mu^2} \cdot \frac{1}{n_X}
                        }
                        {
                            (64\kappa + 1)(64 n_X \kappa + n_X + 1)
                        }
                        \Big(
                            \Expect \| \tiltheta_{s-1} - \theta^* \|^2
                            +
                            \frac{\gamma}{\mu}
                            \cdot
                            \frac{64\kappa + 3}{64n_X \kappa + n_X + 1}
                            \Expect \| \tilde{w}_{s-1} - w^* \|^2
                        \Big)
        \label{e:svrpda1:total_bound_final_interm7}
    \end{align}
where step (a) uses the fact that the $\tiltheta = \tiltheta_{s-1}$ and $\tilde{w} = \tilde{w}_{s-1}$ when we are considering the $s$-th stage/outer-loop (see Algorithm \ref{alg:svrpda1} in the main paper). Define the following Lyapunov functions:
    \begin{align}
        P_{s,k}
                &=
                        \Expect \|\theta^{(k)} - \theta^*\|^2
                        +
                        \frac{\gamma}{\mu}
                        \cdot
                        \frac{64\kappa + 3}{64n_X \kappa + n_X + 1}
                        \Expect \|w^{(k)} - w^*\|^2
                        \nn\\
        P_{s}
                &=
                        \Expect \|\tiltheta_s - \theta^*\|^2
                        +
                        \frac{\gamma}{\mu}
                        \cdot
                        \frac{64\kappa + 3}{64n_X \kappa + n_X + 1}
                        \Expect \|\tilde{w}_s - w^*\|^2
                        \nn
    \end{align}
As a result, we can rewrite inequality \eqref{e:svrpda1:total_bound_final_interm7} as
    \begin{align}
        P_{s,k}
                &\le
                        r_P \cdot P_{s,k-1}
                        +
                        \frac{
                            \frac{8 B_f^2 \big( 1 - \overline{1/n_{Y}} \big)}{\mu\gamma} 
                            + 
                            \frac{4 B_w^2 B_\theta^2}{\mu^2} \cdot \frac{1}{n_X}
                        }
                        {
                            (64\kappa + 1)(64 n_X \kappa + n_X + 1)
                        }
                        P_{s-1}
                        \nn\\
                &\le
                        \bigg(
                            1
                            -
                            \frac{1}{78.8n_X \kappa + 1.3 n_X + 1.3}
                        \bigg)
                        \cdot P_{s,k-1}
                        +
                        \frac{
                            \frac{8 B_f^2 \big( 1 - \overline{1/n_{Y}} \big)}{\mu\gamma} 
                            + 
                            \frac{4 B_w^2 B_\theta^2}{\mu^2} \cdot \frac{1}{n_X}
                        }
                        {
                            (64\kappa + 1)(64 n_X \kappa + n_X + 1)
                        }
                        P_{s-1}
    \end{align}
Furthermore, at the $s$-th stage (outer loop iteration), when Option I is used in Algorithm \ref{alg:svrpda1} in the main paper, we have $\tiltheta_s = \theta^{(M)}$ and $\tilde{w}_s = w^{(M)}$. Therefore, it holds that
    \begin{align}
        P_s
                &=
                        P_{s,M}
                        \nn\\
                &\overset{(a)}{\le}
                        \bigg(
                            1
                            -
                            \frac{1}{78.8n_X \kappa + 1.3 n_X + 1.3}
                        \bigg)^M
                        P_{s,0}
                        \nn\\
                        &\quad
                        +
                        \sum_{k=0}^{M-1}
                        \bigg(
                            1
                            -
                            \frac{1}{78.8n_X \kappa + 1.3 n_X + 1.3}
                        \bigg)^k
                        \times
                        \frac{
                            \frac{8 B_f^2 \big( 1 - \overline{1/n_{Y}} \big)}{\mu\gamma} 
                            + 
                            \frac{4 B_w^2 B_\theta^2}{\mu^2} \cdot \frac{1}{n_X}
                        }
                        {
                            (64\kappa + 1)(64 n_X \kappa + n_X + 1)
                        }
                        P_{s-1}
                        \nn\\
                &\le
                        \bigg(
                            1
                            -
                            \frac{1}{78.8n_X \kappa + 1.3 n_X + 1.3}
                        \bigg)^M
                        P_{s,0}
                        \nn\\
                        &\quad
                        +
                        \sum_{k=0}^{+\infty}
                        \bigg(
                            1
                            -
                            \frac{1}{78.8n_X \kappa + 1.3 n_X + 1.3}
                        \bigg)^k
                        \times
                        \frac{
                            \frac{8 B_f^2 \big( 1 - \overline{1/n_{Y}} \big)}{\mu\gamma} 
                            + 
                            \frac{4 B_w^2 B_\theta^2}{\mu^2} \cdot \frac{1}{n_X}
                        }
                        {
                            (64\kappa + 1)(64 n_X \kappa + n_X + 1)
                        }
                        P_{s-1}
                        \nn\\
                &=
                        \bigg(
                            1
                            -
                            \frac{1}{78.8n_X \kappa + 1.3 n_X + 1.3}
                        \bigg)^M
                        P_{s,0}
                        \nn\\
                        &\quad
                        +
                        \bigg(
                            \frac{8 B_f^2 \big( 1 - \overline{1/n_{Y}} \big)}{\mu\gamma} 
                            + 
                            \frac{4 B_w^2 B_\theta^2}{\mu^2} \cdot \frac{1}{n_X}
                        \bigg)
                        \frac{
                            78.8n_X \kappa + 1.3 n_X + 1.3
                        }
                        {
                            (64\kappa + 1)(64 n_X \kappa + n_X + 1)
                        }
                        P_{s-1}
                        \nn\\
                &\le
                        \bigg(
                            1
                            -
                            \frac{1}{78.8n_X \kappa + 1.3 n_X + 1.3}
                        \bigg)^M
                        P_{s,0}
                        +
                        \bigg(
                            \frac{8 B_f^2 \big( 1 - \overline{1/n_{Y}} \big)}{\mu\gamma} 
                            + 
                            \frac{4 B_w^2 B_\theta^2}{\mu^2} \cdot \frac{1}{n_X}
                        \bigg)
                        \frac{
                            1.3
                        }
                        {
                            64\kappa + 1
                        }
                        P_{s-1}
                        \nn\\
                &\le
                        \bigg(
                            1
                            -
                            \frac{1}{78.8n_X \kappa + 1.3 n_X + 1.3}
                        \bigg)^M
                        P_{s,0}
                        +
                        \frac{
                            \frac{16 B_f^2 \big( 1 - \overline{1/n_{Y}} \big)}{\mu\gamma} 
                            + 
                            \frac{8 B_w^2 B_\theta^2}{\mu^2} \cdot \frac{1}{n_X}
                        }
                        {
                            64\kappa + 1
                        }
                        P_{s-1}
                        \nn\\
                &\overset{(b)}{=}
                        \bigg[
                            \bigg(
                                1
                                -
                                \frac{1}{78.8n_X \kappa + 1.3 n_X + 1.3}
                            \bigg)^M
                            +
                            \frac{
                                \frac{16 B_f^2 \big( 1 - \overline{1/n_{Y}} \big)}{\mu\gamma} 
                                + 
                                \frac{8 B_w^2 B_\theta^2}{\mu^2} \cdot \frac{1}{n_X}
                            }
                            {
                                64\kappa + 1
                            }
                        \bigg]
                        P_{s-1}
                        \nn\\
                &\le
                        \bigg[
                            e^{-\frac{M}{78.8n_X \kappa + 1.3 n_X + 1.3}}
                            +
                            \frac{1}{4}
                        \bigg]
                        P_{s-1}
    \end{align}
where step (a) iteratively applies inequality \eqref{e:svrpda1:total_bound_final_interm7}, and step (b) uses the fact $P_{s,0}=P_{s-1}$ (because $
\theta^{(0)}=\tiltheta_{s-1}$ and $w^{(0)}=\tilde{w}_{s-1}$ as shown in Algorithm \ref{alg:svrpda1}). Choosing $M = \lceil 78.8n_X \kappa + 1.3 n_X + 1.3 \rceil$, where $\lceil \cdot \rceil$ denotes the roundup operation, we have
    \begin{align}
        P_s
                &\le
                        (e^{-1} + 1/4)P_{s-1}
                <
                        \frac{3}{4} P_{s-1}
                \le
                        \Big(\frac{3}{4}\Big)^s P_{0}
        \label{e:svrpda1:P_s_bound_Option1}
    \end{align}

Therefore, $P_s$ converges to zero at a linear rate of $3/4$. Furthermore, we requires a total of $\ln \frac{1}{\epsilon}$ outer loop iterations to reach $\epsilon$-solution. And for each outer loop iteration, it requires $M$ steps of inner-loop primal-dual updates, which is $O(1)$ per step (in number oracle calls), and $O(n_X n_Y)$ for evaluating the batch gradients for the control variates, where $n_Y=(n_{Y_0} + \cdots + n_{Y_{n_X-1}})/n_X$. Therefore, the complexity per outer loop iteration is $O(n_Xn_Y + M)$, and the total complexity is 
    \begin{align}
        O\Big(
            \big( n_X n_Y + n_X \kappa + n_X \big) \ln \frac{1}{\epsilon}
        \Big)
        \label{e:svrpda1:total_complexity}
    \end{align}
Noting that $\Expect\|\tiltheta_s-\theta^*\|^2 \le P_s$, the above bound \eqref{e:svrpda1:P_s_bound_Option1} also implies that $\Expect\|\tiltheta_s-\theta^*\|^2$ also converges to zero at a linear rate of $3/4$ and the total complexity to reach $\Expect\|\tiltheta_s-\theta^*\|^2 \le \epsilon$ is also given by \eqref{e:svrpda1:total_complexity}.

\subsection{Convergence for Option II}

Next, we move on to analyze the Option~II case of the algorithm, wherein $\tiltheta$ at the end of each state is chosen to be one of the $M$ previous $\theta^{(k)}$ values (see Algorithm \ref{alg:svrpda1} in the main paper).

Adding \eqref{e:SVRPDA_I:dual_bd_2} and \eqref{e:SVRPDA_I:final_bd_primal_final} we obtain the total bound for the primal and dual variable updates:

\begin{equation}
\begin{aligned}
& \Bigg( \frac{1}{2 \alpha_w} + \gamma \Bigg) \Expect \{ \|w^{(k)} - w^*\|^2 \mid \clF_k \} + \frac{1}{2 \alpha_w} \Expect \{ \|w^{(k)} - w^{(k - 1)} \|^2 \mid \clF_k \}
\\
& \Bigg (\frac{1}{2 \alpha_\theta} + \mu \Bigg ) \Expect \Big \{ \|\theta^{(k)} - \theta^*\|^2 \big | \clF_k \Big \}
+ \frac{1}{2 \alpha_\theta} \Expect \Big \{ \|\theta^{(k)} - \theta^{(k - 1)} \|^2 \big | \clF_k \Big \}
\\
& \leq \Bigg( \frac{1}{2 \alpha_w} + \frac{\gamma \big(n_X - 1 \big)}{n_X} \Bigg) \| w^{(k - 1)} - w^* \|^2 
+ \frac{1}{2 \alpha_\theta} \| \theta^{(k - 1)} - \theta^* \|^2 \\
& {+ \Big( 2\alpha_w B_f^2 \Big( 1 - \overline{1/n_{Y}} \Big) + \alpha_\theta B_w^2 B_\theta^2 \Big) \| \theta^{(k - 1)} -  \tiltheta \|^2}
\\
& + \Expect \Big \{ L(\theta^{(k - 1)}, w^{(k)} - w^*) - L(\theta^*, w^{(k)} - w^*) \mid \clF_k \Big \}
\\
& + \big(n_X - 1) \Expect \Big \{ L(\theta^{(k - 1)}, w^{(k)} - w^{(k - 1)}) - L(\theta^*, w^{(k)} - w^{(k - 1)}) \mid \clF_k \Big \}
\\
& + \Expect \Big \{ \Big[ L(\theta^{(k)} , w^*) - L(\theta^{*} , w^*) \Big ] \big | \clF_k \Big \}
\\
& +  \Expect \Big \{ \Big[ \Big \langle L'_\theta(\theta^{(k - 1)}, w^{(k)}) ,\, \theta^{*}  \Big \rangle - \Big \langle L'_\theta(\theta^{(k - 1)}, w^{(k)}) ,\, \theta^{(k)} \Big \rangle \Big] \big | \clF_k \Big \}
\end{aligned}
\label{e:opt2:SVRPDA_I:total_bound_1}
\end{equation}


We will now bound the $L$ terms in \eqref{e:opt2:SVRPDA_I:total_bound_1}. First consider the second $L$ term in \eqref{e:opt2:SVRPDA_I:total_bound_1}:
\begin{align}
& (n_X - 1) \Big[L(\theta^{(k - 1)}, w^{(k)} - w^{(k - 1)}) - L(\theta^*, w^{(k)} - w^{(k - 1)}) \Big]
\nn
\\
\overset{(a)}{\leq} & (n_X - 1) \Big[ \big \langle L'(\theta^{(k - 1)}, w^{(k)} - w^{(k - 1)}) \,, \theta^{(k-1)} - \theta^* \big \rangle
\Big ]
\nn
\\
\overset{(b)}{\leq} & {\beta_2 (n_X - 1)^2} \| L'(\theta^{(k - 1)}, w^{(k)} - w^{(k - 1)}) \|^2 + \frac{1}{\beta_2} \| \theta^{(k-1)} - \theta^* \|^2
\label{e:opt2:SVRPDA_I:second_L_term}
\end{align}
for some $\beta_2 > 0$ to be determined later, where step $(a)$ uses convexity of the function $L$ in its first variable, and $(b)$ uses $a^2+b^2 \ge 2ab \ge ab$.

We further lower bound the first term in \eqref{e:opt2:SVRPDA_I:second_L_term}. Ignoring the scaling factors, this can be rewritten as follows:
\begin{equation}
\begin{aligned}
\| L'(\theta^{(k - 1)}, w^{(k)} - w^{(k - 1)}) \|^2
& \overset{(a)}{=} \Big \|\frac{1}{n_X} \sum_{i = 0}^{n_X - 1} \barf'_i(\theta) (w_i^{(k)} - w_i^{(k - 1)}) \Big \|^2
\\
& \overset{(b)}{=} \frac{1}{n_X^2} \Big \| \sum_{i = 0}^{n_X - 1} \barf'_i(\theta) (w_i^{(k)} - w_i^{(k - 1)}) \Big \|^2
\\
& \overset{(c)}{\leq} \frac{B_f^2}{n_X^2} \Big \| \sum_{i = 0}^{n_X - 1} w_i^{(k)} - w_i^{(k - 1)} \Big \|^2
\end{aligned}
\label{e:opt2:SVRPDA_I:sec_L_term_2}
\end{equation}
where $(a)$ follows from the definition \eqref{e:SVRPDA_I:b_g_3}, $(b)$ is direct, by removing the $1/n_X$ outside the $\|\cdot\|^2$ operator, and $(c)$ uses the bounded gradients assumption (Assumption~\ref{a:f:smooth_bg}).

Notice that, conditioned on $\clF_k$, $w^{(k)}$ is the only random variable in \eqref{e:opt2:SVRPDA_I:sec_L_term_2}. Furthermore, for each $i$ and $j$, using \eqref{e:SVRPDA_I:dual_dummy_variable} we have, 
\begin{equation}
\sum_{i = 0}^{n_X - 1} w_i^{(k)} - w_i^{(k-1)} = w'_{ij} - w_i^{(k-1)} \qquad \text{with probability \quad $1/n_X n_{Y_i}$}
\label{e:opt2:SVRPDA_I:joint_prob}
\end{equation}

Taking expectation, conditioned on $\clF_k$ on both sides of \eqref{e:opt2:SVRPDA_I:sec_L_term_2},
\begin{equation}
\begin{aligned}
\Expect \Big \{ \| L'(\theta^{(k - 1)}, w^{(k)} - w^{(k - 1)}) \|^2 \Big | \clF_k \Big \}
& {\leq} \frac{B_f^2}{n_X^2} \Expect \Big \{ \Big \| \sum_{i = 0}^{n_X - 1} w_i^{(k)} - w_i^{(k - 1)} \Big \|^2 \Big | \clF_k \Big \}
\\
& \overset{(a)}{=} \frac{B_f^2}{n_X^2} \frac{1}{n_X n_{Y_i}} \sum_{i = 0}^{n_X - 1} \sum_{j = 0}^{n_{Y_i} - 1} \big \| w'_{ij} - w_i^{(k - 1)} \big \|^2
\\
& \overset{(b)}{=} \frac{B_f^2}{n_X^2} \sum_{i = 0}^{n_X - 1} 
\Expect \Big \{  \big \| w^{(k)}_{i} - w_i^{(k - 1)} \big \|^2 \Big | \clF_k \Big \} 
\\
& \overset{(c)}{=} \frac{B_f^2}{n_X^2}
\Expect \Big \{  \big \| w^{(k)} - w^{(k - 1)} \big \|^2 \Big | \clF_k \Big \} 
\end{aligned}
\label{e:opt2:SVRPDA_I:sec_L_term_3}
\end{equation}
where $(a)$ follows from \eqref{e:opt2:SVRPDA_I:joint_prob}, $(b)$ follows from the last identity in \eqref{e:SVRPDA_I:expec_w_i}, and $(c)$ is just definition. It follows from \eqref{e:opt2:SVRPDA_I:second_L_term} and \eqref{e:opt2:SVRPDA_I:sec_L_term_3} that:
\begin{equation}
\begin{aligned}
& (n_X - 1) \Expect \Big \{  \Big[L(\theta^{(k - 1)}, w^{(k)} - w^{(k - 1)}) - L(\theta^*, w^{(k)} - w^{(k - 1)}) \Big] \Big | \clF_k \Big \}
\\
& {\leq} \frac{\beta_2 B_f^2 (n_X - 1)^2}{n_X^2}
\Expect \Big \{  \big \| w^{(k)} - w^{(k - 1)} \big \|^2 \Big | \clF_k \Big \}  + \frac{1}{\beta_2} \| \theta^{(k-1)} - \theta^* \|^2
\end{aligned}
\label{e:opt2:SVRPDA_I:second_L_term_final}
\end{equation}

\bigskip

Next consider the remaining $L$ terms (lines $4$ and $5$ of \eqref{e:opt2:SVRPDA_I:total_bound_1}). We have:
\begin{align}
& L \big (\theta^{(k - 1)}, \,  w^{(k)} - w^* \big) -  L \big (\theta^*, \,  w^{(k)} - w^* \big) 
+ {L(\theta^{(k)} , w^*) - L(\theta^*, w^*)}
\nn\\
&=  
L \big (\theta^{(k - 1)}, \,  w^{(k)} \big) -  L \big (\theta^*, \,  w^{(k)} \big) + L \big (\theta^*, \, w^* \big) -  L \big (\theta^{(k - 1)}, \,  w^* \big) + {L(\theta^{(k)} , w^*) - L(\theta^*, w^*)}
\nn\\
&=
L \big (\theta^{(k - 1)}, \,  w^{(k)} \big) -  L \big (\theta^*, \,  w^{(k)} \big) + L(\theta^{(k)} , w^*) -  L \big (\theta^{(k - 1)}, \,  w^* \big)
\label{e:opt2:SVRPDA_I:total_5terms_bound1}
\end{align}
Therefore, the remaining $L$ terms of \eqref{e:opt2:SVRPDA_I:total_bound_1} can be bounded as:
\begin{align}
& {L(\theta^{(k)} , w^*) - L(\theta^*, w^*)} 
+ 
L \big (\theta^{(k - 1)},  w^{(k)} \!-\! w^* \big) \!-\!  L \big (\theta^*,  w^{(k)} \!-\! w^* \big) \!-\! \langle L'_\theta(\theta^{(k - 1)}, w^{(k)}) , \theta^{(k)} \!-\! \theta^* \rangle 
\nn\\
&\overset{(a)}{=}
L \big (\theta^{(k - 1)}, \,  w^{(k)} \big) -  L \big (\theta^*, \,  w^{(k)} \big) 
+ L(\theta^{(k)} , w^*) -  L \big (\theta^{(k - 1)}, \,  w^* \big)
\!-\! \langle L'_\theta(\theta^{(k - 1)}, w^{(k)}) , \theta^{(k)} \!-\! \theta^* \rangle 
\nn\\
&\overset{(b)}{\le}
\big\langle
L'_\theta(\theta^{(k - 1)}, \,  w^{(k)}), \theta^{(k - 1)}-\theta^*
\big\rangle
+
\big\langle
L'_{\theta}(\theta^{(k)} , w^*), \theta^{(k)}-\theta^{(k - 1)}
\big\rangle
-
\langle 
L'_\theta(\theta^{(k - 1)}, w^{(k)}) , \theta^{(k)} \!-\! \theta^* 
\rangle 
\nn\\
&\overset{(c)}{=}
-\big\langle
L'_\theta(\theta^{(k - 1)}, \,  w^{(k)}), \theta^{(k)} - \theta^{(k - 1)}
\big\rangle
+
\big\langle
L'_{\theta}(\theta^{(k)} , w^*), \theta^{(k)}-\theta^{(k - 1)}
\big\rangle
\nn\\
&\overset{(d)}{=}
-\big\langle
L'_\theta(\theta^{(k - 1)}, \,  w^{(k)}), \theta^{(k)} - \theta^{(k - 1)}
\big\rangle
+
\big\langle
L'_\theta(\theta^{(k - 1)}, \,  w^*), \theta^{(k)} - \theta^{(k - 1)}
\big\rangle
\nn\\
&
\quad-
\big\langle
L'_\theta(\theta^{(k - 1)}, \,  w^*), \theta^{(k)} - \theta^{(k - 1)}
\big\rangle
+
\big\langle
L'_{\theta}(\theta^{(k)} , w^*), \theta^{(k)}-\theta^{(k - 1)}
\big\rangle
\nn\\
&\overset{(e)}{=}
\big\langle
L'_\theta(\theta^{(k - 1)}, \,  w^*) - L'_\theta(\theta^{(k - 1)}, \,  w^{(k)}), 
\theta^{(k)} - \theta^{(k - 1)}
\big\rangle
\nn
\\
& \hspace{0.4in}+
\big\langle
L'_{\theta}(\theta^{(k)} , w^*) - L'_\theta(\theta^{(k - 1)}, \,  w^*),
\theta^{(k)}-\theta^{(k - 1)}
\big\rangle
\nn\\
&\overset{(f)}{=}
-\big\langle
L'_\theta(\theta^{(k - 1)}, \,  w^{(k)} - w^*), 
\theta^{(k)} - \theta^{(k - 1)}
\big\rangle
\nn
\\
& \hspace{0.4in}+
\big\langle
L'_{\theta}(\theta^{(k)} , w^*) - L'_\theta(\theta^{(k - 1)}, \,  w^*),
\theta^{(k)}-\theta^{(k - 1)}
\big\rangle
\label{e:opt2:SVRPDA_I:total_5terms_bound2}
\end{align}

where step $(a)$ substitutes \eqref{e:opt2:SVRPDA_I:total_5terms_bound1}, step $(b)$ uses the convexity of $L(\theta,w)$ in $\theta$ by applying $f(x)-f(y) \le \langle f'(x), x-y \rangle$, step $(c)$ merges the first and the third terms in line $(b)$, step $(d)$ adds and subtracts the same term (i.e., the second and the third terms), step $(e)$ merges the first term with the second term and also merges the third term and the fourth term, step $(f)$ uses the linearity of $L(\theta,w)$ in $w$. We now proceed to bound the two terms in \eqref{e:opt2:SVRPDA_I:total_5terms_bound2}. For a $\beta_1 > 0$ (to be determined later), the first term in \eqref{e:opt2:SVRPDA_I:total_5terms_bound2} can be upper bounded as
\begin{align}
&\big | 
\langle L'_\theta(\theta^{(k - 1)}, w^{(k)} - w^*) ,\, \theta^{(k)} - \theta^{(k - 1)} \rangle
\big | 
\nn\\
&\overset{(a)}{\leq}
\frac{1}{\beta_1} \big \| L'_\theta(\theta^{(k - 1)}, w^{(k)} - w^*) \big \|^2 
+ 
\beta_1 \big \| \theta^{(k)} - \theta^{(k - 1)} \big \|^2
\nn\\
&\overset{(b)}{\leq}
\frac{B_f^2}{\beta_1 n_X} \big \| w^{(k)} - w^* \big \|^2 
+ 
\beta_1 \big \| \theta^{(k)} - \theta^{(k - 1)} \big \|^2
\label{e:opt2:SVRPDA_I:total_5term_bound2_term1}
\end{align}    
where {$(a)$ uses Cauchy-Schwartz inequality and the fact that $a^2+b^2 \ge 2ab \ge ab$}, and step $(b)$ uses the definition of $L_{\theta}'$ in \eqref{e:SVRPDA_I:b_g_3} and Jensen's inequality for $\|\cdot\|^2$. Next, the second term in \eqref{e:opt2:SVRPDA_I:total_5terms_bound2} can be upper bounded as
\begin{align}
&\big|\big\langle
L'_{\theta}(\theta^{(k)} , w^*) - L'_\theta(\theta^{(k - 1)}, \,  w^*),
\theta^{(k)}-\theta^{(k - 1)}
\big\rangle\big|
\nn\\
&\overset{(a)}{\le}
\| L'_{\theta}(\theta^{(k)} , w^*) - L'_\theta(\theta^{(k - 1)}, \,  w^*) \|
\cdot
\big \| \theta^{(k)} - \theta^{(k - 1)} \big \|
\nn\\
&\overset{(b)}{\le}
B_{\theta} B_w
\| \theta^{(k)} - \theta^{(k - 1)} \|^2
\label{e:opt2:SVRPDA_I:total_5term_bound2_term2}
\end{align}
where step $(a)$ uses Cauchy-Schwartz inequality and step $(b)$ uses Lipschitz condition of the gradient $f_{\theta}'$ together with the boundedness of $w^*$ (Lemma \ref{lemma:conjugate_Lipschitz}). Substituting \eqref{e:opt2:SVRPDA_I:total_5term_bound2_term1}--\eqref{e:opt2:SVRPDA_I:total_5term_bound2_term2} into \eqref{e:opt2:SVRPDA_I:total_5terms_bound2}, we obtain
\begin{align}
& {L(\theta^{(k)} , w^*) - L(\theta^*, w^*)} 
+ 
L \big (\theta^{(k - 1)},  w^{(k)} \!-\! w^* \big) \!-\!  L \big (\theta^*,  w^{(k)} \!-\! w^* \big) \!-\! \langle L'_\theta(\theta^{(k - 1)}, w^{(k)}) , \theta^{(k)} \!-\! \theta^* \rangle 
\nn\\
&\le
\frac{B_f^2}{\beta_1 n_X} \big \| w^{(k)} - w^* \big \|^2 
+ 
\beta_1 \big \| \theta^{(k)} - \theta^{(k - 1)} \big \|^2
+
B_{\theta} B_w
\| \theta^{(k)} - \theta^{(k - 1)} \|^2
\nn\\
&=
\frac{B_f^2}{\beta_1 n_X} \big \| w^{(k)} - w^* \big \|^2 
+ 
(\beta_1 + B_{\theta} B_w) \big \| \theta^{(k)} - \theta^{(k - 1)} \big \|^2
\label{e:opt2:SVRPDA_I:total_5terms_bound3}
\end{align}

We have now bounded all the $L$ terms in \eqref{e:opt2:SVRPDA_I:total_bound_1}.


Substituting both \eqref{e:opt2:SVRPDA_I:second_L_term_final} and  \eqref{e:opt2:SVRPDA_I:total_5terms_bound3} in \eqref{e:opt2:SVRPDA_I:total_bound_1}, we get the final bound, without the $L$ terms as follows:
\begin{equation}
\begin{aligned}
& \Bigg( \frac{1}{2 \alpha_w} + \gamma \Bigg) \Expect \{ \|w^{(k)} - w^*\|^2 \mid \clF_k \} + \frac{1}{2 \alpha_w} \Expect \{ \|w^{(k)} - w^{(k - 1)} \|^2 \mid \clF_k \}
\\
& \Bigg (\frac{1}{2 \alpha_\theta} + \mu \Bigg ) \Expect \Big \{ \|\theta^{(k)} - \theta^*\|^2 \big | \clF_k \Big \}
+ \frac{1}{2 \alpha_\theta} \Expect \Big \{ \|\theta^{(k)} - \theta^{(k - 1)} \|^2 \big | \clF_k \Big \}
\\
& \leq \Bigg( \frac{1}{2 \alpha_w} + \frac{\gamma \big(n_X - 1 \big)}{n_X} \Bigg) \| w^{(k - 1)} - w^* \|^2 
+ \frac{1}{2 \alpha_\theta} \| \theta^{(k - 1)} - \theta^* \|^2 \\
& {+ \Big( 2\alpha_w B_f^2 \Big( 1 - \overline{1/n_{Y}} \Big) + \alpha_\theta B_w^2 B_\theta^2 \Big) \| \theta^{(k - 1)} -  \tiltheta \|^2}
\\
& + \frac{\beta_2 B_f^2 (n_X - 1)^2}{n_X^2} \Expect \Big \{ \|w^{(k)} - w^{(k - 1)}) \|^2 \big | \clF_k \Big \}
+ \frac{1}{\beta_2} \| \theta^{(k-1)} - \theta^* \|^2 
\\
& + \frac{B_f^2}{\beta_1 n_X} \Expect \Big \{ \big \|w^{(k)} - w^* \big \|^2 \big | \clF_k \Big \}
+ (\beta_1 + B_{\theta} B_w) \Expect \Big \{ \big \| \theta^{(k)} - \theta^{(k - 1)} \big \|^2 \big | \clF_k \Big \}
\end{aligned}
\label{e:opt2:SVRPDA_I:total_bound_f1}
\end{equation}

Combining common terms and rearranging,
\begin{equation}
\begin{aligned}
& \Bigg (\frac{1}{2 \alpha_\theta} + \mu \Bigg ) \Expect \Big \{ \|\theta^{(k)} - \theta^*\|^2 \big | \clF_k \Big \}
+ \Bigg( \frac{1}{2 \alpha_w} + \gamma - \frac{B_f^2}{\beta_1 n_X} \Bigg) \Expect \{ \|w^{(k)} - w^*\|^2 \mid \clF_k \}
\\
& \leq \Bigg( \frac{1}{2 \alpha_w} + \frac{\gamma \big(n_X - 1 \big)}{n_X} \Bigg) \| w^{(k - 1)} - w^* \|^2 
\\
& + \Bigg(\frac{1}{2 \alpha_\theta} + \frac{1}{\beta_2} + {4\alpha_w B_f^2 \Big( 1 - \overline{1/n_{Y}} \Big) + 2 \alpha_\theta B_w^2 B_\theta^2} \Bigg) \| \theta^{(k - 1)} - \theta^* \|^2 
\\
&+ \Bigg(\beta_1 \!+\! B_{\theta} B_w \!-\! \frac{1}{2 \alpha_\theta} \Bigg) \Expect \Big \{ \|\theta^{(k)} - \theta^{(k - 1)} \|^2 \big | \clF_k \Big \} 
\\
& + \Bigg(\frac{\beta_2 B_f^2 (n_X - 1)^2}{{n_X^2}} \!-\! \frac{1}{2 \alpha_w} \Bigg) \Expect \{ \|w^{(k)} - w^{(k - 1)} \|^2 \mid \clF_k \}
\\
& {+ \Big( 4 \alpha_w B_f^2 \Big( 1 - \overline{1/n_{Y}} \Big) + 2 \alpha_\theta B_w^2 B_\theta^2 \Big) \| \tiltheta -  \theta^* \|^2}
\end{aligned}
\label{e:opt2:SVRPDA_I:dual_primal_combined_1}
\end{equation}
where we have also used the inequality $(a+b)^2 \leq 2 a^2 + 2 b^2$ for the $\|\theta^{(k-1)} - \tiltheta \|^2$ term.


We now choose the step-sizes $\alpha_{\theta}$, $\alpha_{w}$ and $M$ as in Theorem~\ref{t:final_bound_SVRPDA_I_opt2}:

\begin{equation}
\alpha_\theta = \Big( \frac{25 B_f^2}{\gamma} + {10 B_\theta B_w} + \frac{80 B_w^2 B_\theta^2}{\mu} \Big)^{-1}
\label{e:opt2:SVRPDA_I:alpha_theta:old:combined}
\end{equation}
\begin{equation}
\alpha_w = \frac{\mu}{40 B_f^2}
\label{e:opt2:SVRPDA_I:alpha_w:old}
\end{equation}
\begin{equation}
\begin{aligned}
M = \max \Bigg( \frac{10}{\alpha_\theta \mu} \,, \frac{2 {n_X}}{\alpha_w \gamma} \,, 4 n_X \Bigg)
\end{aligned}
\label{e:opt2:SVRPDA_I:M:old}
\end{equation}

The choice of $\alpha_\theta$ in \eqref{e:opt2:SVRPDA_I:alpha_theta:old:combined} ensures the following three bounds:
\begin{equation}
\alpha_\theta \leq \frac{\gamma}{25 B_f^2} 
\qquad \text{or} 
\qquad \alpha_\theta \leq \frac{1}{10 B_\theta B_w}
\qquad \text{or} 
\qquad \alpha_\theta \leq \frac{\mu}{80 B_w^2 B_\theta^2}
\label{e:opt2:SVRPDA_I:alpha_theta:old}
\end{equation}

Furthermore, we choose $\beta_1$ and $\beta_2$ as follows:
\begin{equation}
\beta_1 = - {B_\theta B_w} + \frac{1}{2 \alpha_\theta}
\label{e:opt2:SVRPDA_I_beta_1:old}
\end{equation}
\begin{equation}
\beta_2 = \frac{1}{2 \alpha_w B_f^2}
\label{e:opt2:SVRPDA_I_beta_2:old}
\end{equation}
The second inequality in \eqref{e:opt2:SVRPDA_I:alpha_theta:old} will ensure positivity of $\beta_1$.

\paragraph{Applying the above choice of hyper-parameters:}

Based on \eqref{e:opt2:SVRPDA_I_beta_1:old} we have:
\begin{equation}
\begin{aligned}
\beta_1 + B_{\theta} B_w - \frac{1}{2 \alpha_\theta} & = 0
\label{e:opt2:SVRPDA_I:beta_1_effect:old}
\end{aligned}
\end{equation}
and using \eqref{e:opt2:SVRPDA_I_beta_2:old}:
\begin{equation}
\begin{aligned}
{ \frac{\beta_2 B_f^2 (n_X-1)^2}{n_X^2}} - \frac{1}{2 \alpha_w} 
& = { \frac{1}{2 \alpha_w B_f^2} \frac{B_f^2 (n_X-1)^2}{n_X^2}} - \frac{1}{2 \alpha_w} 
\\
& = { \frac{1}{2 \alpha_w} \frac{(n_X-1)^2}{n_X^2}} - \frac{1}{2 \alpha_w} 
\\
& < 0
\label{e:opt2:SVRPDA_I:beta_2_effect:old}
\end{aligned}
\end{equation}
Equations \eqref{e:opt2:SVRPDA_I:beta_1_effect:old} and \eqref{e:opt2:SVRPDA_I:beta_2_effect:old} will ensure that the third and fourth terms on the right hand side of \eqref{e:opt2:SVRPDA_I:dual_primal_combined_1} are either $0$ or negative, and therefore can be ignored, reducing the bound in \eqref{e:opt2:SVRPDA_I:dual_primal_combined_1} to:
\begin{equation}
\begin{aligned}
& \Bigg (\frac{1}{2 \alpha_\theta} + \mu \Bigg ) \Expect \Big \{ \|\theta^{(k)} - \theta^*\|^2 \big | \clF_k \Big \}
+ \Bigg( \frac{1}{2 \alpha_w} + \gamma - \frac{B_f^2}{\beta_1 n_X} \Bigg) \Expect \{ \|w^{(k)} - w^*\|^2 \mid \clF_k \}
\\
& \leq \Bigg( \frac{1}{2 \alpha_w} + \frac{\gamma \big(n_X - 1 \big)}{n_X} \Bigg) \| w^{(k - 1)} - w^* \|^2 
\\
& + \Bigg(\frac{1}{\beta_2} + \frac{1}{2 \alpha_\theta} + {4 {\alpha_w } B_f^2 + 2 \alpha_\theta B_w^2 B_\theta^2} \Bigg) \| \theta^{(k - 1)} - \theta^* \|^2
\\
& {+ \Big( 4 {\alpha_w } B_f^2 + 2 \alpha_\theta B_w^2 B_\theta^2 \Big) \| \tiltheta -  \theta^* \|^2}
\end{aligned}
\label{e:opt2:SVRPDA_I:bd_simplification_1:old}
\end{equation}
where we have also used $(1 - \overline{1/n_{Y}}) \leq 1$. 

The $\tiltheta$ in the \eqref{e:opt2:SVRPDA_I:bd_simplification_1:old} is $\tiltheta_{s-1}$, the fixed primal variable at the beginning of stage $s$. Denote $\tiltheta_s$ to be the primal variable randomly chosen among $\theta^{(k)}$ for $1 \leq k \leq M$ at the end of stage $s$. We define $\tilw_{s-1}$ and $\tilw_{s}$ in a similar manner (though neither of them are used in the algorithm), and also note that $\theta^{(0)}$ and $w^{(0)}$ are initialized to $\tiltheta_{s-1}$ and $\tilw_{s-1}$ at the beginning of stage $s$. 
Summing both sides of  \eqref{e:opt2:SVRPDA_I:bd_simplification_1:old} over all $1 \leq k \leq M$,
\begin{equation}
\begin{aligned}
& \Bigg (\frac{1}{2 \alpha_\theta} + \mu \Bigg ) \Expect \Big \{ \|\theta^{(M)} - \theta^*\|^2 \big | \clF_s \Big \}
+ \Bigg( \frac{1}{2 \alpha_w} + \gamma - \frac{B_f^2}{\beta_1 n_X} \Bigg) \Expect \{ \|w^{(M)} - w^*\|^2 \mid \clF_s \}
\\
& + \Bigg (\mu - \frac{1}{\beta_2} - {4 {\alpha_w } B_f^2 - 2\alpha_\theta B_w^2 B_\theta^2} \Bigg ) 
\sum_{k = 1}^{M-1} \Expect \Big \{ \|\theta^{(k)} - \theta^*\|^2 \big | \clF_s \Big \}
\\
& + \Bigg(\frac{\gamma}{n_X} - \frac{B_f^2}{\beta_1 n_X} \Bigg) \sum_{k = 1}^{M-1} \Expect \{ \|w^{(k)} - w^*\|^2 \mid \clF_s \}
\\
& \leq \Bigg( \frac{1}{2 \alpha_w} + \frac{\gamma \big(n_X - 1 \big)}{n_X} \Bigg) \| w^{(0)} - w^* \|^2 
+ \Bigg(\frac{1}{\beta_2} + \frac{1}{2 \alpha_\theta} \Bigg) \| \theta^{(0)} - \theta^* \|^2
\\
& {+ M \Big( 4 {\alpha_w } B_f^2 + 2 \alpha_\theta B_w^2 B_\theta^2 \Big) \| \tiltheta -  \theta^* \|^2}
\end{aligned}
\label{e:opt2:SVRPDA_I:total_bd_sum_M:old}
\end{equation}
Substituting $w^{(0)} = \tilw_{s-1}$ and $\theta^{(0)} = \tiltheta_{s-1}$, and also noting that the first two terms on the left hand side of \eqref{e:opt2:SVRPDA_I:total_bd_sum_M:old} can be combined with the second two terms (note that the difference in coefficients are positive, and positive terms on the left hand side of the inequality can be ignored):
\begin{equation}
\begin{aligned}
& \Bigg (\mu - \frac{1}{\beta_2} - {4 {\alpha_w } B_f^2 - 2 \alpha_\theta B_w^2 B_\theta^2} \Bigg ) 
\sum_{k = 1}^{M} \Expect \Big \{ \|\theta^{(k)} - \theta^*\|^2 \big | \clF_s \Big \}
\\
& + \Bigg(\frac{\gamma}{n_X} - \frac{B_f^2}{\beta_1 n_X} \Bigg) \sum_{k = 1}^{M} \Expect \{ \|w^{(k)} - w^*\|^2 \mid \clF_s \}
\\
& \leq \Bigg( \frac{1}{2 \alpha_w} + \frac{\gamma \big(n_X - 1 \big)}{n_X} \Bigg) \| \tilw_{s-1} - w^* \|^2 
\\
& + \Bigg(\frac{1}{\beta_2} + \frac{1}{2 \alpha_\theta} + M \Big( 4 {\alpha_w } B_f^2 + 2 \alpha_\theta B_w^2 B_\theta^2 \Big) \Bigg) \| \tiltheta_{s-1} - \theta^* \|^2
\end{aligned}
\label{e:opt2:SVRPDA_I:total_bd_sum_M_2:old}
\end{equation}

Dividing both sides of \eqref{e:opt2:SVRPDA_I:total_bd_sum_M_2:old} by $M$ and applying Jensen's inequality on the left hand side, we obtain:
\begin{equation}
\begin{aligned}
& \Bigg (\mu \!-\! \frac{1}{\beta_2} \!-\! {4 {\alpha_w } B_f^2 \!-\! 2 \alpha_\theta B_w^2 B_\theta^2} \Bigg ) 
\Expect \Big \{ \|\tiltheta_{s} - \theta^*\|^2 \big | \clF_s \Big \} 
\\
& + \Bigg(\frac{\gamma}{n_X} \!-\! \frac{B_f^2}{\beta_1 n_X} \Bigg) \Expect \{ \|\tilw_{s} - w^*\|^2 \mid \clF_s \}
\\
& \leq \Bigg( \frac{1}{2 M \alpha_w} \!+\! \frac{\gamma \big(n_X - 1 \big)}{M n_X} \Bigg) \| \tilw_{s-1} - w^* \|^2 
\\
& +  \Bigg(\frac{1}{M \beta_2} \!+\! \frac{1}{2 M \alpha_\theta} \!+\! 4 {\alpha_w } B_f^2 + 2 \alpha_\theta B_w^2 B_\theta^2 \Big) \| \tiltheta_{s-1} - \theta^* \|^2
\end{aligned}
\label{e:opt2:SVRPDA_I:total_bd_til_param:old}
\end{equation}

We now substitute the hyper-parameter values in \eqref{e:opt2:SVRPDA_I:alpha_theta:old:combined} \eqref{e:opt2:SVRPDA_I:alpha_w:old} \eqref{e:opt2:SVRPDA_I:M:old} \eqref{e:opt2:SVRPDA_I_beta_1:old} \eqref{e:opt2:SVRPDA_I_beta_2:old} in \eqref{e:opt2:SVRPDA_I:total_bd_til_param:old} to obtain a linear rate. 

Substituting these values in the coefficient of the first term on the left hand side of \eqref{e:opt2:SVRPDA_I:total_bd_til_param:old}
{(we're using the third bound for $\alpha_\theta$ in \eqref{e:opt2:SVRPDA_I:alpha_theta:old} here)}:
\begin{equation}
\begin{aligned}
\mu - \frac{1}{\beta_2} - {4 {\alpha_w } B_f^2 -  2 \alpha_\theta B_w^2 B_\theta^2} & = \mu - 2 \alpha_w B_f^2 - {4 {\alpha_w } B_f^2 - 2 \alpha_\theta B_w^2 B_\theta^2}
\\
& \geq \mu - 2 \frac{\mu}{40 B_f^2} B_f^2 - {4 \frac{\mu}{40 B_f^2} B_f^2 - 2  \frac{\mu}{80 B_w^2 B_\theta^2} B_w^2 B_\theta^2}
\\
& = \mu - \frac{\mu}{20} - {\frac{\mu}{10} - \frac{\mu}{40}}
\\
& = \mu - \frac{7 \mu}{40}
\\
& = \frac{33 \mu}{40}
\\
& \geq \frac{4 \mu}{5}
\end{aligned}
\label{e:opt2:SVRPDA_I:final_bd_coeff_1:old}
\end{equation}

Next, substituting for the coefficient of the second term on the left hand side of \eqref{e:opt2:SVRPDA_I:total_bd_til_param:old}
(here we use the first and second bounds for $\alpha_\theta$ in \eqref{e:opt2:SVRPDA_I:alpha_theta:old}, both in the inequality in the fourth line):
\begin{equation}
\begin{aligned}
\frac{\gamma}{n_X} - \frac{B_f^2}{\beta_1 n_X} 
& = \frac{\gamma}{n_X} - \frac{B_f^2}{n_X} \Bigg( - B_\theta B_w + \frac{1}{2 \alpha_\theta} \Bigg)^{-1}
\\
& = \frac{\gamma}{n_X} - \frac{B_f^2}{n_X} \Bigg(\frac{1 - 2 \alpha_\theta B_\theta B_w}{2 \alpha_\theta } \Bigg)^{-1}
\\
& = \frac{\gamma}{n_X} - \frac{B_f^2}{n_X} \Bigg(\frac{2 \alpha_\theta }{1 - 2 \alpha_\theta B_\theta B_w} \Bigg)
\\
& \geq \frac{\gamma}{n_X} - \frac{B_f^2}{n_X} \times \frac{2 \gamma}{25 B_f^2} \times \frac{1}{1 - \frac{2}{10 B_\theta B_w} B_\theta B_w}
\\
& = \frac{\gamma}{n_X} - \frac{2 \gamma}{25 n_X} \times \frac{5}{4}
\\
& = \frac{\gamma}{n_X} - \frac{\gamma}{10 n_X}
\\
& \geq \frac{4 \gamma}{5 n_X}
\end{aligned}
\label{e:opt2:SVRPDA_I:final_bd_coeff_2:old}
\end{equation}

Next consider the coefficient of the first term on the right hand side of \eqref{e:opt2:SVRPDA_I:total_bd_til_param:old}
{(we use the second and third values of $M$ in \eqref{e:opt2:SVRPDA_I:M:old}, both in the second line)}:
\begin{equation}
\begin{aligned}
\frac{1}{2 M \alpha_w} + \frac{\gamma \big(n_X - 1 \big)}{M n_X} 
& \leq \frac{1}{2 M \alpha_w} + \frac{\gamma}{M}
\\
& \leq \frac{\gamma}{4 n_X} + \frac{\gamma}{4 n_X}
\\
& = \frac{\gamma}{2 n_X}
\end{aligned}
\label{e:opt2:SVRPDA_I:final_bd_coeff_3:old}
\end{equation}

Finally, we consider the coefficient of the second term on the right hand side of \eqref{e:opt2:SVRPDA_I:total_bd_til_param:old} {(we use the third bound for $\alpha_\theta$ in \eqref{e:opt2:SVRPDA_I:alpha_theta:old}, and the first and third definitions of $M$ in \eqref{e:opt2:SVRPDA_I:M:old})}:
\begin{equation}
\begin{aligned}
\frac{1}{M \beta_2} + \frac{1}{2 M \alpha_\theta} + 4 {\alpha_w } B_f^2 + 2 \alpha_\theta B_w^2 B_\theta^2 
& \leq \frac{2 \alpha_w B_f^2}{M} + \frac{\mu}{20} + 4 \frac{\mu}{40 B_f^2} B_f^2 + \frac{\mu}{80 B_w^2 B_\theta^2} 2 B_w^2 B_\theta^2 
\\
& \leq {\frac{\mu}{40 B_f^2} 2 B_f^2} + \frac{\mu}{20} + \frac{\mu}{10} + \frac{\mu}{40}
\\
& = {\frac{9 \mu}{40}} 
\\
& = \frac{\mu}{4}
\end{aligned}
\label{e:opt2:SVRPDA_I:final_bd_coeff_4:old}
\end{equation}

Using \eqref{e:opt2:SVRPDA_I:final_bd_coeff_1:old}, \eqref{e:opt2:SVRPDA_I:final_bd_coeff_2:old}, \eqref{e:opt2:SVRPDA_I:final_bd_coeff_3:old},  and \eqref{e:opt2:SVRPDA_I:final_bd_coeff_4:old} in \eqref{e:opt2:SVRPDA_I:total_bd_til_param:old},
we have:
\begin{equation}
\begin{aligned}
& \frac{4 \mu}{5} 
\Expect \Big \{ \|\tiltheta_{s} - \theta^*\|^2 \big | \clF_s \Big \} 
+ \frac{4 \gamma}{5 n_X} \Expect \{ \|\tilw_{s} - w^*\|^2 \mid \clF_s \}
\\
& \leq \frac{\gamma}{2 n_X} \| \tilw_{s-1} - w^* \|^2 
+ \frac{\mu}{4} \| \tiltheta_{s-1} - \theta^* \|^2
\end{aligned}
\label{e:opt2:SVRPDA_I:total_bd_final:old}
\end{equation}
Applying full expectation to both sides of the above inequality, we obtain
\begin{equation}
\begin{aligned}
& \frac{4 \mu}{5} 
\Expect \|\tiltheta_{s} - \theta^*\|^2
+ \frac{4 \gamma}{5 n_X} \Expect \|\tilw_{s} - w^*\|^2
\\
& \leq \frac{\gamma}{2 n_X} \Expect \| \tilw_{s-1} - w^* \|^2 
+ \frac{\mu}{4} \Expect \| \tiltheta_{s-1} - \theta^* \|^2.
\end{aligned}
\label{e:opt2:SVRPDA_I:total_bd_final:old2}
\end{equation}
Dividing both sides by $4\mu/5$, we have
    \begin{align} 
        &\Expect \|\tiltheta_{s} - \theta^*\|^2
        + 
        \frac{\gamma}{n_X \mu} \Expect \|\tilw_{s} - w^*\|^2
        \nn\\
        & \leq \frac{5}{16} \Expect \| \tilw_{s-1} - w^* \|^2 
        + 
        \frac{5\gamma}{8\mu n_X} \Expect \| \tiltheta_{s-1} - \theta^* \|^2,
    \end{align}
which can be further bounded as
    \begin{align}
        &\Expect \|\tiltheta_{s} - \theta^*\|^2
        + 
        \frac{\gamma}{n_X \mu} \Expect \|\tilw_{s} - w^*\|^2
        \nn\\
        & \leq \frac{5}{8} 
        \Big(
            \Expect \| \tilw_{s-1} - w^* \|^2 
            + 
            \frac{\gamma}{\mu n_X} \Expect \| \tiltheta_{s-1} - \theta^* \|^2
        \Big).
    \end{align}
Define the Lyapunov function $P_s$ to be
    \begin{align}
        P_s
                &=
                        \Expect \|\tiltheta_{s} - \theta^*\|^2
                        + 
                        \frac{\gamma}{n_X \mu} \Expect \|\tilw_{s} - w^*\|^2.
                        \nn
    \end{align}
Then, inequality \eqref{e:opt2:SVRPDA_I:total_bd_final:old2} can be expressed as
    \begin{align}
        P_s
                &\le
                        \frac{5}{8} P_{s-1}
                \le
                        \Big(\frac{5}{8}\Big)^s P_{0}.
        \label{e:opt2:SVRPDA_I:P_s_bound_Option2}
    \end{align}
Therefore, $P_s$ converges to zero at a linear rate of $5/8$. In order to achieve $\epsilon$-precision solution (i.e., $P_s \le \epsilon$), it requires a total of $O(\ln \frac{1}{\epsilon})$ outer-loop iterations (stages). And for each outer loop iteration, it requires $M$ steps of inner-loop primal-dual updates, which is $O(1)$ per step (in number of oracle calls), and $O(n_X n_Y)$ for evaluating the batch gradients for the control variates, where $n_Y = (n_{Y_0} + \cdots + n_{Y_{n_X-1}})/n_X$. Therefore, the complexity per outer loop iteration is $O(n_Xn_Y + M)$ so that the total complexity can be written as:
    \begin{equation}
        O\Big(\big(n_X n_Y + M \big) \ln \big( \frac{P_0}{\epsilon} \big) \Big).
    \end{equation}
Recall from \eqref{e:opt2:SVRPDA_I:M:old} that $M$ is given by
\[
M = 10/\mu \alpha_\theta + \frac{2 {n_X}}{\alpha_w \gamma} + 4 n_X.
\]
where, by \eqref{e:opt2:SVRPDA_I:alpha_theta:old:combined} and \eqref{e:opt2:SVRPDA_I:alpha_w:old}, the step-sizes $\alpha_\theta$ and $\theta_w$ are given by
\begin{equation}
\alpha_\theta = \Big( \frac{25 B_f^2}{\gamma} + {10 B_\theta B_w} + \frac{80 B_w^2 B_\theta^2}{\mu} \Big)^{-1}, \quad
\alpha_w = \frac{\mu}{40 B_f^2}. \nn
\end{equation}
This implies that $M = O(B_f^2/\mu \gamma + B_w^2 B_\theta^2 / \mu^2 + (B_f^2/\mu \gamma) n_X + n_X)$. In consequence, the total complexity is 
    \begin{align}
        O\Big(
            \big( n_X n_Y + n_X \kappa + n_X \big) \ln \frac{1}{\epsilon}
        \Big),
        \label{e:opt2:SVRPDA_I:total_complexity}
    \end{align}
where
    \begin{align}
        \kappa = B_f^2 / \gamma \mu + B_w^2B_\theta^2 / \mu^2.
    \end{align}
Noting that $\Expect\|\tiltheta_s-\theta^*\|^2 \le P_s$, the bound \eqref{e:opt2:SVRPDA_I:P_s_bound_Option2} implies that $\Expect\|\tiltheta_s-\theta^*\|^2$ also converges to zero at a linear rate of $5/8$ and the total complexity to reach $\Expect\|\tiltheta_s-\theta^*\|^2 \le \epsilon$ is also given by \eqref{e:opt2:SVRPDA_I:total_complexity}.
\section{Special case: \methodexact~with $n_{Y_i} \equiv 1$, $f_{\theta}$ linear in $\theta$ and no outer loop}
\label{appendix:proof_svrpda1_splcase}

First, we observe that in this special case, our \methodexact~algorithm will become a single-loop algorithm, and that the outer-loop in Algorithm \ref{alg:svrpda1} is no longer needed. To see this, first note that when $n_{Y_i} \equiv 1$, $\delta_k^w$ is independent of $\tiltheta$ because the last two terms in \eqref{e:svrpda1:delta_w_k} would cancel each other. Second, when $f_{\theta}$ is linear in $\theta$, the term $\barf'_{i_k} (\tiltheta)$ in \eqref{e:svrpda1:Ukupdate} and $U_0$ in \eqref{e:svrpda1:batch_quantities} are independent of $\tiltheta$, which further implies that $U_k$ (that is recursively defined in \eqref{e:svrpda1:Ukupdate}) is also independent of $\tiltheta$. Finally, we also note that, with linear $f_{\theta}$, the first two terms in \eqref{e:svrpda1:delta_theta_k} cancel with each other, so that $\delta_k^{\theta} \equiv U_k$ is independent of $\tiltheta$. As a result, the inner loop in Algorithm \ref{alg:svrpda1} does not require an outer-loop to update the reference variable $\tiltheta$. 

The following theorem establishes the complexity bound for the SVRPDA-I algorithm in this special case.
\begin{theorem}
\label{t:final_bound_SVRPDA_I_spl_case}
Suppose Assumptions \ref{a:main:g_phi:sc_sm}--\ref{a:main:L:convex} hold. Furthermore, suppose $n_{Y_i} = 1$, $1 \leq i \leq n_X$, and $f_\theta$ is a linear function of $\theta$. Consider just the the inner loop of Algorithm \ref{alg:svrpda1}, with s = 1 fixed, and
\begin{align}
\alpha_{\theta}
&=
\frac{\gamma}{{16} B_f^2 + 4 n_X \mu \gamma}, \quad \text{and} \quad
\alpha_w = \frac{1}{2 \gamma} \frac{3 n_X + \kappa + 1}{n_X + \kappa + 1}
\nn
\end{align}
where $\kappa = {B_f^2} \big / {\gamma \mu}$ is the condition number.
Then, the Lyapunov function 
\[
\Delta^{(k)} \eqdef \Big( \frac{1}{2 \alpha_\theta} \!+\! \mu \Big) \Expect \Big \{ \| \theta^{(k)} \!-\! \theta^* \|^2 \Big | \clF_k \Big \} \!+\! \Big( \frac{1}{2 \alpha_w} \!+\! \gamma \Big)  \Expect \Big \{ \| w^{(k)} \!-\! w^* \|^2 \Big | \clF_k \Big \}
\]
satisfies $\Delta^{(k)} \le \big(1 - {1}/\big({1 + 2 \kappa + 2 n_X}\big) \big)^k \Delta^{(0)}$. Furthermore, the overall computational cost (in number of oracle calls) for reaching $\Delta^{(k)} \le \epsilon$ is upper bounded by
\begin{align}
O\Big(
\big( {n_X + \kappa} \big) \ln \Big(\frac{1}{\epsilon}\Big)
\Big).
\label{e:thm:svrpda1_spl_case:total_complexity}
\end{align}
\end{theorem}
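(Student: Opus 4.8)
The plan is to specialize the general primal bound \eqref{e:SVRPDA_I:final_bd_primal_final} and dual bound \eqref{e:SVRPDA_I:dual_bd_2} to this setting, where two structural facts collapse the analysis to a single per-iteration contraction. First, because $f_\theta$ is linear in $\theta$ its gradient is constant, so $B_\theta=0$; this gives $\kappa = B_f^2/(\gamma\mu)+B_w^2B_\theta^2/\mu^2 = B_f^2/(\gamma\mu)$, matching the theorem, and it forces the primal variance term in \eqref{e:SVRPDA_I:variance_of_gradient} to vanish so that $\delta_k^\theta$ coincides with the exact batch gradient $L_\theta'(\theta^{(k-1)},w^{(k)})$. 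Second, because $n_{Y_i}=1$ we have $\overline{1/n_Y}=1$, so the factor $(1-\overline{1/n_Y})$ multiplying $\|\theta^{(k-1)}-\tilde\theta\|^2$ in \eqref{e:SVRPDA_I:dual_bd_2} is zero (equivalently, the residual \eqref{e:SVRPDA_I:expect_delta_ij_term} vanishes). Consequently every $\|\theta^{(k-1)}-\tilde\theta\|^2$ term disappears from both bounds, which is precisely why the reference variable $\tilde\theta$ and the outer loop are unnecessary, and why the Lyapunov recursion can be stated directly on the inner index $k$.

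I would then add the two specialized bounds. The left-hand side already carries the coefficients $\frac{1}{2\alpha_\theta}+\mu$ and $\frac{1}{2\alpha_w}+\gamma$ that define $\Delta^{(k)}$, together with the nonnegative slack terms $\frac{1}{2\alpha_\theta}\|\theta^{(k)}-\theta^{(k-1)}\|^2$ and $\frac{1}{2\alpha_w}\|w^{(k)}-w^{(k-1)}\|^2$. The remaining work is to dispose of the $L$-terms, for which I would use the Option~I bound \eqref{e:SVRPDA_I:option1:Lterms_final} rather than the Option~II one, because with $B_\theta=0$ it controls them purely by $\frac{2\beta_1 B_f^2}{n_X}\|w^{(k-1)}-w^*\|^2$, $2\beta_1 B_f^2\Expect\|w^{(k)}-w^{(k-1)}\|^2$, and $\frac{1}{\beta_1}\Expect\|\theta^{(k)}-\theta^{(k-1)}\|^2$; crucially it produces no $\|w^{(k)}-w^*\|^2$ or $\|\theta^{(k)}-\theta^*\|^2$ term, so the Lyapunov coefficients on the left are preserved exactly.

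Next I would choose $\beta_1$ in the admissible interval $2\alpha_\theta \le \beta_1 \le 1/(4\alpha_w B_f^2)$ dictated by the first two conditions of \eqref{e:svrpda1:param_condition_interm1} (the $\beta_0$ conditions are void since $B_\theta=0$); the stated step-sizes make this interval nonempty. With such $\beta_1$ the coefficients $\frac{1}{2\alpha_\theta}-\frac{1}{\beta_1}$ and $\frac{1}{2\alpha_w}-2\beta_1 B_f^2$ of the two slack terms are nonnegative and can be dropped, leaving the clean two-term recursion
\begin{align}
\Big(\tfrac{1}{2\alpha_\theta}+\mu\Big)\Expect\|\theta^{(k)}-\theta^*\|^2
+\Big(\tfrac{1}{2\alpha_w}+\gamma\Big)\Expect\|w^{(k)}-w^*\|^2
&\le
\tfrac{1}{2\alpha_\theta}\Expect\|\theta^{(k-1)}-\theta^*\|^2
\nn\\
&\quad+\Big(\tfrac{1}{2\alpha_w}+\gamma-\tfrac{\gamma}{n_X}+\tfrac{2\beta_1 B_f^2}{n_X}\Big)\Expect\|w^{(k-1)}-w^*\|^2.
\nn
\end{align}
Since both left coefficients equal the Lyapunov weights, a term-by-term comparison suffices: it remains to verify, with $\frac{1}{2\alpha_\theta}=\mu(8\kappa+2n_X)$ and $\alpha_w$ as given, that the primal ratio $\tfrac{1/(2\alpha_\theta)}{1/(2\alpha_\theta)+\mu}$ and the dual ratio (the bracketed coefficient divided by $\frac{1}{2\alpha_w}+\gamma$) are both bounded by the advertised factor, which yields $\Delta^{(k)}\le(1-\rho)\Delta^{(k-1)}$ and hence $\Delta^{(k)}\le(1-\rho)^k\Delta^{(0)}$.

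The complexity bound \eqref{e:thm:svrpda1_spl_case:total_complexity} then follows immediately: the contraction gives $k=O((n_X+\kappa)\ln(1/\epsilon))$ inner iterations to reach $\Delta^{(k)}\le\epsilon$, each costing $O(1)$ oracle calls because $U_k$ is maintained incrementally through \eqref{e:svrpda1:Ukupdate} (and $\delta_k^w=\overline{f}_{i_k}(\theta^{(k-1)})$ is a single evaluation when $n_{Y_i}=1$), while the one-time computation of $U_0$ in \eqref{e:svrpda1:batch_quantities} costs $O(n_X)$; the total is therefore $O((n_X+\kappa)\ln(1/\epsilon))$. The main obstacle I anticipate is the final coefficient bookkeeping: the primal and dual ratios must both be pinned beneath the target factor, and matching the precise constant $1/(1+2\kappa+2n_X)$ requires choosing $\beta_1$ so as to balance the residual $\frac{2\beta_1 B_f^2}{n_X}$ against the $-\frac{\gamma}{n_X}$ gain in the dual coefficient while simultaneously keeping $\beta_1\ge 2\alpha_\theta$; any slack here only affects the constant inside the logarithmic iteration count and not the stated $O((n_X+\kappa)\ln(1/\epsilon))$ complexity.
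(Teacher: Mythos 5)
Your proposal is correct and follows essentially the same route as the paper's proof: specialize the primal and dual bounds using $B_\theta=0$ and $\overline{1/n_Y}=1$, reuse the Option~I treatment of the $L$ terms with $B_\theta=0$, pick $\beta_1$ in the interval $[2\alpha_\theta,\,1/(4\alpha_w B_f^2)]$ (the paper takes the endpoint $\beta_1=2\alpha_\theta$) to drop the slack terms, and compare the primal and dual contraction ratios term by term before the standard complexity accounting. Your computation $\tfrac{1}{2\alpha_\theta}=\mu(8\kappa+2n_X)$ is the correct arithmetic (the paper's own simplification to $2\kappa+2n_X$ contains a slip), but as you note this only perturbs the constant in the contraction factor and leaves the $O((n_X+\kappa)\ln(1/\epsilon))$ bound intact.
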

In comparison, the authors in \cite{zhaxia17sto} propose a stochastic primal dual coordinate (SPDC) algorithm for this special case and prove an overall complexity of 
$O\big(
\big( n_X + {\sqrt{n_X \kappa}} \big) \ln \big(\frac{1}{\epsilon}\big)
\big)$ to achieve an $\epsilon$-error solution, where the condition number $\kappa = B_f^2 / \mu \gamma$. This is by far the best complexity for this class of problems. It is interesting to note that the complexity result in \eqref{e:thm:svrpda1_spl_case:total_complexity} and the complexity result in \cite{zhaxia17sto} only differ in their dependency on $\kappa$. This difference is most likely due to the acceleration technique that is employed in the primal update of the SPDC algorithm. We conjecture that the dependency on the condition number of SVRPDA-I can be further improved using a similar acceleration technique.

\subsection{Proof of \Theorem{t:final_bound_SVRPDA_I_spl_case}}

It is useful to first discuss the main implications of choosing $f_\theta$ to be linear in $\theta$ and $n_{Y_i} = 1$ for all $i$. First, based on  Assumption~\ref{a:f:smooth_bg} (or equivalently \textbf{Assumption~\ref{a:main:f:smooth_bg}}), we have $B_\theta = 0$, since $f'_\theta$ is independent of $\theta$. This also implies that $L'_\theta$ is independent of $\theta$, and therefore, Assumption~\ref{a:L:convex} (or equivalently {\textbf{Assumption~\ref{a:main:L:convex}}}) holds with equality:
\begin{align}
L(\theta_1, w) - L(\theta_2, w) = \langle L'_\theta(\theta_2, w), \,\, \theta_1 - \theta_2 \rangle.
\end{align}
In particular, for any $\theta \in \Re^d$ and $w \in \Re^\ell$, $L(\theta, w) = \langle L'_\theta(\theta, w), \,\, \theta \rangle$. Finally, $n_{Y_i}$ = 1 implies 
$
\overline{1/n_{Y}} = \frac{1}{n_X} \sum_{i = 0}^{n_X - 1} 1/n_{Y_i} = 1
$.

Using the above implications in the primal bound \eqref{e:SVRPDA_I:final_bd_primal_final} (in particular, letting $B_\theta = 0$, and using linearity of $L(\theta, w)$), we obtain the primal bound for the special case as follows:
\begin{align}
& \bigg (\frac{1}{2 \alpha_\theta} + \mu \bigg ) \Expect \Big \{ \|\theta^{(k)} - \theta^*\|^2 \big | \clF_k \Big \}
+ \frac{1}{2 \alpha_\theta} \Expect \Big \{ \|\theta^{(k)} - \theta^{(k - 1)} \|^2 \big | \clF_k \Big \}
\nn\\
\leq& 
\frac{1}{2 \alpha_\theta} \| \theta^{(k - 1)} - \theta^* \|^2 
- 
\Expect 
\Big \{ 
\Big[ L(\theta^{(k)}, w^{(k)} - w^*) - L(\theta^* , w^{(k)} - w^*) \Big ] \big | \clF_k 
\Big \}
\label{e:svrpda1:case:primal}
\end{align}
Similarly, using the above implications in the dual bound \eqref{e:SVRPDA_I:dual_bd_2} (in particular, letting $( 1 - \overline{1/n_{Y}} ) = 0$), the dual bound for the special case becomes:
\begin{align}
& 
\Bigg( \frac{1}{2 \alpha_w} + \gamma \Bigg) 
\Expect \{ \|w^{(k)} - w^*\|^2 \mid \clF_k \} 
+ 
\frac{1}{2 \alpha_w} 
\Expect \{ \|w^{(k)} - w^{(k - 1)} \|^2 \mid \clF_k \}
\nn\\
\leq& 
\Bigg( \frac{1}{2 \alpha_w} + \frac{\gamma \big(n_X - 1 \big)}{n_X} \Bigg) 
\| w^{(k - 1)} - w^* \|^2 
\nn\\
& +
\Expect \Big \{ L(\theta^{(k - 1)}, w^{(k-1)} - w^*) - L(\theta^*, w^{(k-1)} - w^*) \mid \clF_k \Big \}
\nn\\
&+ 
n_X 
\Expect \Big \{ 
L(\theta^{(k - 1)}, w^{(k)} - w^{(k - 1)})-L(\theta^*, w^{(k)} - w^{(k - 1)}) \mid \clF_k 
\Big \}
\label{e:svrpda1:case:dual}
\end{align}
Adding \eqref{e:svrpda1:case:primal} and \eqref{e:svrpda1:case:dual}, we obtain the combined primal-dual bound:
\begin{align}
& \bigg (\frac{1}{2 \alpha_\theta} + \mu \bigg ) \Expect \Big \{ \|\theta^{(k)} - \theta^*\|^2 \big | \clF_k \Big \}
+ \frac{1}{2 \alpha_\theta} \Expect \Big \{ \|\theta^{(k)} - \theta^{(k - 1)} \|^2 \big | \clF_k \Big \}
\nn\\
&+  
\Bigg( \frac{1}{2 \alpha_w} + \gamma \Bigg) 
\Expect \{ \|w^{(k)} - w^*\|^2 \mid \clF_k \} 
+ 
\frac{1}{2 \alpha_w} 
\Expect \{ \|w^{(k)} - w^{(k - 1)} \|^2 \mid \clF_k \}
\nn\\
\leq
& 
\frac{1}{2 \alpha_\theta} \| \theta^{(k - 1)} - \theta^* \|^2 
+ 
\Bigg( \frac{1}{2 \alpha_w} + \frac{\gamma \big(n_X - 1 \big)}{n_X} \Bigg) 
\| w^{(k - 1)} - w^* \|^2 
\nn\\
& - 
\Expect 
\Big \{ 
\Big[ L(\theta^{(k)}, w^{(k)} - w^*) - L(\theta^* , w^{(k)} - w^*) \Big ] \big | \clF_k 
\Big \}
\nn\\
& +
\Expect \Big \{ L(\theta^{(k - 1)}, w^{(k-1)} - w^*) - L(\theta^*, w^{(k-1)} - w^*) \mid \clF_k \Big \}
\nn\\
&+ 
n_X 
\Expect \Big \{ 
L(\theta^{(k - 1)}, w^{(k)} - w^{(k - 1)})-L(\theta^*, w^{(k)} - w^{(k - 1)}) \mid \clF_k 
\Big \}
\label{e:svrpda1:case:primal-dual}
\end{align}


As done in the previous proofs, we will first consider all the $L$ terms that appear on the right hand side of \eqref{e:svrpda1:case:primal-dual}. Following exactly the same steps as \eqref{e:svrpda1:option1:Lterms1}, \eqref{e:svrpda1:L_partial_bound}, \eqref{e:svrpda1:option1:Lterms3} and \eqref{e:svrpda1:option1:Lterms4}, we obtain the final bound for the $L$ terms as given in \eqref{e:SVRPDA_I:option1:Lterms_final}, but with $B_\theta = 0$. We still write out the whole simplification details here for completeness, and also to show how this special case is much simpler than the more general case. 

Considering all the $L$ terms in \eqref{e:svrpda1:case:primal-dual}, we have:
\begin{align}
&-\Big[L(\theta^{(k)}, w^{(k)} \!-\! w^*) \!-\! L(\theta^* , w^{(k)} \!-\! w^*)\Big]
+
\Big[L(\theta^{(k \!-\! 1)}, w^{(k\!-\!1)} \!-\! w^*) \!-\! L(\theta^*, w^{(k\!-\!1)} \!-\! w^*)\Big]
\nn\\
&
+
n_X\Big[
L(\theta^{(k \!-\! 1)}, w^{(k)} \!-\! w^{(k \!-\! 1)})\!-\!L(\theta^*, w^{(k)} \!-\! w^{(k \!-\! 1)})
\Big]
\nn\\
=&
\Big[
L(\theta^{(k\!-\!1)}, w^{(k\!-\!1)} \!-\! w^*) \!-\! L(\theta^{(k)}, w^{(k\!-\!1)} \!-\! w^*)
\Big]
\nn\\
&
+
\Big[
L(\theta^{(k)}, w^{(k\!-\!1)} \!-\! w^{(k)}) \!-\! L(\theta^*, w^{(k\!-\!1)} \!-\! w^{(k)})
\Big]
\nn\\
&
+
n_X\Big[
L(\theta^{(k \!-\! 1)}, w^{(k)} \!-\! w^{(k \!-\! 1)})\!-\!L(\theta^*, w^{(k)} \!-\! w^{(k \!-\! 1)})
\Big]
\nn\\
=&
\Big[
L(\theta^{(k\!-\!1)}, w^{(k\!-\!1)} \!-\! w^*) \!-\! L(\theta^{(k)}, w^{(k\!-\!1)} \!-\! w^*)
\Big]
\nn\\
&
+
\Big[
L(\theta^{(k)}, w^{(k\!-\!1)} \!-\! w^{(k)}) 
\!-\! 
L(\theta^{(k\!-\!1)}, w^{(k\!-\!1)} \!-\! w^{(k)})
\Big]
\nn\\
&
+
(n_X\!-\!1)\Big[
L(\theta^{(k \!-\! 1)}, w^{(k)} \!-\! w^{(k \!-\! 1)})\!-\!L(\theta^*, w^{(k)} \!-\! w^{(k \!-\! 1)})
\Big]
\nn\\
=&
\Big[
L(\theta^{(k\!-\!1)}, w^{(k)} \!-\! w^*) \!-\! L(\theta^{(k)}, w^{(k)} \!-\! w^*)
\Big]
\nn\\
&
+
(n_X\!-\!1)\Big[
L(\theta^{(k \!-\! 1)}, w^{(k)} \!-\! w^{(k \!-\! 1)})\!-\!L(\theta^*, w^{(k)} \!-\! w^{(k \!-\! 1)})
\Big]
\nn\\
=&
L\Big(\theta^{(k\!-\!1)}, w^{(k)} \!-\! w^* \!+\! (n_X\!-\!1)(w^{(k)}\!-\!w^{(k\!-\!1)}) \Big)
\!-\!
L\Big(\theta^{(k)}, w^{(k)}\!-\!w^* \!+\! (n_X\!-\!1)(w^{(k)} \!-\! w^{(k\!-\!1)}) \Big)
\nn\\
\overset{(a)}{\le}&
\Big\langle
L'_{\theta}
\Big(\theta^{(k\!-\!1)}, w^{(k)} \!-\! w^* + (n_X\!-\!1)(w^{(k)}\!-\!w^{(k\!-\!1)}) \Big),
\theta^{(k\!-\!1)} \!-\! \theta^{(k)}
\Big\rangle
\nn\\
=&
\Big\langle
\frac{1}{n_X}
\sum_{i=0}^{n_X\!-\!1}
\barf'_i(\theta^{(k\!-\!1)})
\Big(w_i^{(k)} \!-\! w_i^* + (n_X\!-\!1)(w_i^{(k)}\!-\!w_i^{(k\!-\!1)}) \Big)
,
\theta^{(k\!-\!1)} \!-\! \theta^{(k)}
\Big\rangle
\nn\\
=&
\Big\langle
\frac{1}{n_X}
\sum_{i=0}^{n_X\!-\!1}
\barf'_i(\theta^{(k\!-\!1)})
\Big(w_i^{(k\!-\!1)} + n_X(w_i^{(k)}\!-\!w_i^{(k\!-\!1)}) \!-\! w_i^* \Big)
,
\theta^{(k\!-\!1)} \!-\! \theta^{(k)}
\Big\rangle
\nn\\
\overset{(b)}{\le}&
\beta_1
\Big\|
\frac{1}{n_X}
\sum_{i=0}^{n_X\!-\!1}
\barf'_i(\theta^{(k\!-\!1)})
\Big(w_i^{(k\!-\!1)} + n_X(w_i^{(k)}\!-\!w_i^{(k\!-\!1)}) \!-\! w_i^* \Big)
\Big\|^2
+
\frac{1}{\beta_1}
\big\|
\theta^{(k\!-\!1)} \!-\! \theta^{(k)}
\big\|^2
\nn\\
=&
\beta_1
\big\|
\frac{1}{n_X}
\sum_{i=0}^{n_X\!-\!1}
\barf'_i(\theta^{(k\!-\!1)})
\big(w_i^{(k\!-\!1)} \!-\! w_i^*\big) 
\!+\! 
\sum_{i=0}^{n_X\!-\!1}
\barf'_i(\theta^{(k\!-\!1)})
(w_i^{(k)}\!-\!w_i^{(k\!-\!1)})
\big\|^2
\!+\!
\frac{1}{\beta_1}
\big\|
\theta^{(k\!-\!1)} \!-\! \theta^{(k)}
\big\|^2
\nn\\
\overset{(c)}{\le}&
2\beta_1
\big\|
\frac{1}{n_X} \!
\sum_{i=0}^{n_X\!-\!1} \!
\barf'_i(\theta^{(k\!-\!1)})
\Big(w_i^{(k\!-\!1)} \!-\! w_i^*\Big) 
\big\|^2
+ 
2\beta_1
\big\|
\sum_{i=0}^{n_X\!-\!1}
\barf'_i(\theta^{(k\!-\!1)})
(w_i^{(k)}\!-\!w_i^{(k\!-\!1)})
\big\|^2
\nn\\
&
+
\frac{1}{\beta_1}
\big\|
\theta^{(k\!-\!1)} \!-\! \theta^{(k)}
\big\|^2
\nn\\
\hspace{-0.24in}\overset{(d)}{\le}&
\frac{2\beta_1 B_f^2}{n_X}
\!
\sum_{i=0}^{n_X\!-\!1} \!
\| w_i^{(k\!-\!1)} \!-\! w_i^* \|^2
\!+\! 
2\beta_1
\big\|  \!
\sum_{i=0}^{n_X\!-\!1}  \!
\barf'_i(\theta^{(k\!-\!1)})
(w_i^{(k)}\!-\!w_i^{(k\!-\!1)})
\big\|^2
\!+\!
\frac{1}{\beta_1}
\big\|
\theta^{(k\!-\!1)} \!-\! \theta^{(k)}
\big\|^2
\label{e:svrpda1:spl_case:Lsimplificaiton}
\end{align}
where step (a) uses convexity of $L$ in $\theta$, step (b) uses $\langle a, b\rangle \le \beta_1 \|a\|^2 + \frac{1}{\beta_1}\|b\|^2$, step (c) uses $\|a+b\|^2 \le 2\|a\|^2 + 2\|b\|^2$, and step (d) applies Jensen's inequality and bounded gradient assumption to the first term.  The second term in \eqref{e:svrpda1:spl_case:Lsimplificaiton} can be simplified as follows:
\begin{align}
\Expect\Big\{
\Big\|
\sum_{i=0}^{n_X-1}
\barf'_i(\theta^{(k-1)})
(w_i^{(k)}-w_i^{(k-1)})
\Big\|^2
\Big| \clF_k
\Big\}
\!=\!&
\sum_{i=0}^{n_X-1} \!
\sum_{j=0}^{n_{Y_i}-1} \!
\frac{1}{n_X n_{Y_i}}
\Big\|
\barf'_i(\theta^{(k-1)})
(w_{ij}'-w_i^{(k-1)})
\Big\|^2    
\nn\\
\le&
B_f^2
\sum_{i=0}^{n_X-1}
\sum_{j=0}^{n_{Y_i}-1}
\frac{1}{n_X n_{Y_i}}
\Big\|
w_{ij}'-w_i^{(k-1)}
\Big\|^2    
\nn\\
=&
B_f^2
\sum_{i=0}^{n_X-1}
\Expect\{ \|w_i^{(k)} - w_i^{(k-1)}\|^2 \}
\label{e:svrpda1:spl_case:Lsimplificaiton_2}
\end{align}
Therefore, using \eqref{e:svrpda1:spl_case:Lsimplificaiton} and \eqref{e:svrpda1:spl_case:Lsimplificaiton_2}, the conditional expectation of all the $L$ terms in \eqref{e:svrpda1:case:primal-dual} are bounded by:
\begin{align}
&\frac{2\beta_1 B_f^2}{n_X}
\!\sum_{i=0}^{n_X\!-\!1}\!
\| w_i^{(k\!-\!1)} \!-\! w_i^* \|^2
\!+ \!
2\beta_1 B_f^2
\!\sum_{i=0}^{n_X\!-\!1}\!
\Expect\{ \|w_i^{(k)} \!-\! w_i^{(k\!-\!1)}\|^2 | \clF_k \}
\!+ \!
\frac{1}{\beta_1}
\Expect\big\{
\big\|
\theta^{(k\!-\!1)} \!-\! \theta^{(k)}
\big\|^2
\big| \clF_k
\big\}
\nn\\
=&
\frac{2\beta_1 B_f^2}{n_X}
\| w^{(k\!-\!1)} \!-\! w^* \|^2
+ 
2\beta_1 B_f^2
\Expect\{ \|w^{(k)} \!-\! w^{(k\!-\!1)}\|^2 | \clF_k \}
+
\frac{1}{\beta_1}
\Expect\big\{
\big\|
\theta^{(k\!-\!1)} \!-\! \theta^{(k)}
\big\|^2
\big| \clF_k
\big\}          
\label{e:SVRPDA_I:spl_case:Lterms_final}
\end{align}

Substituting the above upper bound for the $L$ terms in \eqref{e:svrpda1:case:primal-dual} leads to
\begin{align}
&
\bigg (\frac{1}{2 \alpha_\theta} + \mu \bigg ) \Expect \Big \{ \|\theta^{(k)} - \theta^*\|^2 \big | \clF_k \Big \}
+
\Bigg( \frac{1}{2 \alpha_w} + \gamma \Bigg) 
\Expect \{ \|w^{(k)} - w^*\|^2 \mid \clF_k \} 
\nn\\
&
+ 
\Big( \frac{1}{2 \alpha_\theta} - \frac{1}{\beta_1} \Big)
\Expect \Big \{ \|\theta^{(k)} - \theta^{(k - 1)} \|^2 \big | \clF_k \Big \}
+ 
\Big( \frac{1}{2 \alpha_w} - 2\beta_1 B_f^2 \Big)
\Expect \{ \|w^{(k)} - w^{(k - 1)} \|^2 \mid \clF_k \}
\nn\\
\leq& 
\frac{1}{2 \alpha_\theta} \| \theta^{(k - 1)} - \theta^* \|^2 
+
\Bigg( 
\frac{1}{2 \alpha_w} 
+ 
\gamma - \frac{\gamma}{n_X} 
+ 
\frac{2\beta_1 B_f^2}{n_X}
\Bigg) 
\| w^{(k - 1)} - w^* \|^2 
\label{e:SVRPDA_I:total_bd:linear}
\end{align}
which will be the final bound we will analyze.

\subsubsection*{Substituting hyper-parameter choices}

Recall the choice of step-sizes $\alpha_\theta$ and $\alpha_w$ defined in Theorem~\ref{t:final_bound_SVRPDA_I_spl_case}:
\begin{equation}
\begin{aligned}
\alpha_\theta = \frac{\gamma}{{16} B_f^2 + 4 n_X \mu \gamma}
\\
\alpha_w = \frac{1}{2 \gamma} \frac{3 n_X + \kappa + 1}{n_X + \kappa + 1}
\end{aligned}
\label{e:alpha_theta_alpha_w_new}
\end{equation}
where the condition number $\kappa$ is also defined in Theorem~\ref{t:final_bound_SVRPDA_I_spl_case}:
\[
\kappa = {B_f^2}/{\gamma \mu}
\]
Furthermore, choosing $\beta_1$ such that
\begin{equation}
\frac{1}{2 \alpha_\theta} = \frac{1}{\beta_1},
\label{e:SVRPDA_I:linear:alpha_theta_beta_1}
\end{equation}
we have:
\begin{equation}
\begin{aligned}
\alpha_\theta \alpha_w 
& = \frac{\gamma}{{16} B_f^2 + 4 n_X \mu \gamma} \times \frac{1}{2 \gamma} \frac{3 n_X + \kappa + 1}{n_X + \kappa + 1}
\\
& = \frac{1}{32 B_f^2 + 8 n_X \mu \gamma} \times \frac{3 n_X + \kappa + 1}{n_X + \kappa + 1}
\\
& < \frac{1}{32 B_f^2 + 8 n_X \mu \gamma} \times \frac{3 n_X + 3 \kappa + 3}{n_X + \kappa + 1}
\\
& = \frac{3}{32 B_f^2 + 8 n_X \mu \gamma}
\\
& < \frac{1}{8 B_f^2 + 2 n_X \mu \gamma}
\\
& < \frac{1}{8 B_f^2}
\label{e:SVRPDA_I:alpha_theta_alpha_w_bd}
\end{aligned}
\end{equation}
so that,
\begin{equation}
\begin{aligned}
\frac{1}{2 \alpha_w} & > 4 \alpha_\theta B_f^2
\\
\implies \frac{1}{2 \alpha_w} & > 2 \beta_1 B_f^2
\label{e:SVRPDA_I:linear:alpha_w_beta_1}
\end{aligned}
\end{equation}
Identities \eqref{e:SVRPDA_I:linear:alpha_theta_beta_1} and \eqref{e:SVRPDA_I:linear:alpha_w_beta_1}
make sure that the third and the fourth terms on the left hand side of \eqref{e:SVRPDA_I:total_bd:linear} are either zero or positive, so that they can be ignored, resulting in:
\begin{align}
&
\bigg (\frac{1}{2 \alpha_\theta} + \mu \bigg ) \Expect \Big \{ \|\theta^{(k)} - \theta^*\|^2 \big | \clF_k \Big \}
+
\Bigg( \frac{1}{2 \alpha_w} + \gamma \Bigg) 
\Expect \{ \|w^{(k)} - w^*\|^2 \mid \clF_k \} 
\nn\\
\leq& 
\frac{1}{2 \alpha_\theta} \| \theta^{(k - 1)} - \theta^* \|^2 
+
\Bigg( 
\frac{1}{2 \alpha_w} 
+ 
\gamma - \frac{\gamma}{n_X} 
+ 
\frac{2\beta_1 B_f^2}{n_X}
\Bigg) 
\| w^{(k - 1)} - w^* \|^2 
\label{e:SVRPDA_I:total_bd:linear_2}
\end{align}

We further look at the coefficients of other error terms in \eqref{e:SVRPDA_I:total_bd:linear_2}. To this end, we have:
\begin{equation}
\begin{aligned}
\frac{1}{2 \alpha_\theta \mu} 
& = \frac{16 B_f^2 + 4 n_X \mu \gamma}{2 \mu \gamma}
\\
& = \frac{8 B_f^2}{\mu \gamma} + 2 n_X
\\
& = 2 \kappa + 2 n_X
\end{aligned}
\label{e:1by2alpha_theta_mu}
\end{equation}
and therefore, letting $r_P$ to denote the ratio of the coefficients of $ \|\theta^{(k-1)} - \theta^*\|^2 $ and \\ $\Expect \big \{ \|\theta^{(k)} - \theta^*\|^2 \big | \clF_k \big \}$, we have:
\begin{equation}
\begin{aligned}
r_P & \eqdef \Big( \frac{1}{2\alpha_\theta} \Big) \Big/ \Big( \frac{1}{2\alpha_\theta} + \mu \Big)
\\
& = 1 - \frac{2\alpha_\theta \mu}{1 + 2\alpha_\theta \mu}, \quad
\\
& = 1 - \frac{1}{1 + \frac{1}{2 \alpha_\theta \mu}}
\\
& = 1 - \frac{1}{1 + 2 \kappa + 2 n_X}
\end{aligned}
\label{e:rp_new}
\end{equation}
Also, for the dual terms we have $r_D$ denoting the ratio of the coefficients of  $ \|w^{(k - 1)} - w^*\|^2 $ and $\Expect \{ \|w^{(k)} - w^*\|^2 \mid \clF_k \} $, so that:
\begin{equation}
\begin{aligned}
r_D 
& \eqdef \Bigg( 
\frac{1}{2 \alpha_w} 
+ 
\gamma - \frac{\gamma}{n_X} 
+ 
\frac{2\beta_1 B_f^2}{n_X}
\Bigg) \big / \Bigg( 
\frac{1}{2 \alpha_w} 
+ 
\gamma
\Bigg)
\\
& = 1 - \frac{1}{n_X} \frac{2\alpha_w\gamma - 8\alpha_w\alpha_\theta B_f^2}{1+2\alpha_w\gamma}
\\
& \overset{(a)}{\leq} 1 - \frac{1}{n_X}\frac{2\alpha_w \gamma -1}{2\alpha_w \gamma + 1}
\\
& \overset{(b)}{=} 1 - \frac{1}{n_X}\frac{2 n_X}{4 n_X + 2 \kappa + 2}
\\
& = 1 - \frac{1}{2 n_X + \kappa + 1}
\end{aligned}
\label{e:rd_new}
\end{equation}
where step (a) follows from \eqref{e:SVRPDA_I:alpha_theta_alpha_w_bd}, and step (b) follows from \eqref{e:SVRPDA_I:linear:2alpha_w_gamma} below:
\begin{equation}
\begin{aligned}
\frac{2 \alpha_w \gamma - 1}{2 \alpha_w \gamma + 1}
& = \frac{\frac{3 n_X + \kappa + 1}{n_X + \kappa + 1} - 1}{\frac{3 n_X + \kappa + 1}{n_X + \kappa + 1} + 1}
\\
& = \frac{3 n_X + \kappa + 1 - n_X - \kappa - 1}{3 n_X + \kappa + 1 + n_X + \kappa + 1}
\\
& = \frac{2 n_X}{4 n_X + 2 \kappa + 2}
\end{aligned}
\label{e:SVRPDA_I:linear:2alpha_w_gamma}
\end{equation}
From \eqref{e:rp_new}, the above bound \eqref{e:rd_new} on $r_D$ implies $r_D \leq r_P$.

Recalling the definition of $\Delta^{(k)}$, the Lyapunov function defined in Theorem~\ref{t:final_bound_SVRPDA_I_spl_case}; Substituting for the step-size values in \eqref{e:alpha_theta_alpha_w_new}, we have:
\begin{equation*}
\begin{aligned}
\Delta^{(k)} & = \Big( \frac{1}{2 \alpha_\theta} \!+\! \mu \Big) \Expect \Big \{ \| \theta^{(k)} \!-\! \theta^* \|^2 \Big | \clF_k \Big \} \!+\! \Big( \frac{1}{2 \alpha_w} \!+\! \gamma \Big)  \Expect \Big \{ \| w^{(k)} \!-\! w^* \|^2 \Big | \clF_k \Big \}
\\
& = \Big( \mu \big (2 \kappa \!+\! 2 n_X \!+\! 1 \big) \Big) \Expect \Big \{ \| \theta^{(k)} \!-\! \theta^* \|^2 \Big | \clF_k \Big \} \!+\! \Big( {\gamma} \Big( \frac{n_X \!+\! \kappa \!+\! 1}{3 n_X \!+\! \kappa \!+\! 1} \!+\! 1 \Big) \Big)  \Expect \Big \{ \| w^{(k)} \!-\! w^* \|^2 \Big | \clF_k \Big \}
\\
& = \Big( \mu \big (2 \kappa \!+\! 2 n_X \!+\! 1 \big) \Big) \Expect \Big \{ \| \theta^{(k)} \!-\! \theta^* \|^2 \Big | \clF_k \Big \} \!+\! \Big( {\gamma} \Big( \frac{4 n_X \!+\! 2 \kappa \!+\! 2}{3 n_X \!+\! \kappa \!+\! 1}\Big) \Big)  \Expect \Big \{ \| w^{(k)} \!-\! w^* \|^2 \Big | \clF_k \Big \}
\end{aligned}
\end{equation*}
Based on \eqref{e:rp_new} and \eqref{e:rd_new}, and the fact that $r_D \leq r_P$,
the inequality \eqref{e:SVRPDA_I:total_bd:linear} then implies:
\begin{equation}
\Delta^{(k)} \leq r \Delta^{(k-1)}
\label{e:SVRPDA_I:lyap_rate}
\end{equation}
where, 
\begin{equation}
r = r_P = 1 - {1}/\big({1 + 2 \kappa + 2 n_X}\big).
\end{equation}
 
The bound \eqref{e:SVRPDA_I:lyap_rate} implies that after $k$ iterations, the error $\Delta^{(k)}$ satisfies:
\[
\Delta^{(k)} \leq r^k \Delta^{(0)}
\]
Therefore, for $\Delta^{(k)} < \epsilon$, it suffices to have $r^k \Delta^{(0)} < \epsilon$, so that the number of iterations $k$ satisfies:
\begin{equation}
\begin{aligned}
& k \ln r \leq \ln \Big ( \frac{\epsilon}{\Delta^{(0)}}  \Big)
\\
\implies & k \ln \Big( 1 - {1}/\big({1 + 2 \kappa + 2 n_X}\big) \Big) \leq \ln \Big ( \frac{\epsilon}{\Delta^{(0)}}  \Big)
\\
\implies &  {- k }/\big({1 + 2 \kappa + 2 n_X}\big) \leq \ln \Big ( \frac{\epsilon}{\Delta^{(0)}}  \Big)
\\
\implies & k  \geq \Big( {1 + 2 \kappa + 2 n_X} \Big)  \ln \Big ( \frac{\Delta^{(0)}}{\epsilon} \Big)
\end{aligned}
\label{e:svrpda1_spl_case:final_complexity}
\end{equation}
where we have used $-\ln(1-x) \geq x$. \eqref{e:svrpda1_spl_case:final_complexity} implies the final complexity result of Theorem~\ref{t:final_bound_SVRPDA_I_spl_case}.
\qed

\end{document}